\numberwithin{equation}{section}
\def\titlerunning#1{\gdef\titrun{#1}}
\def\author#1{\gdef\autrun{\def\and{\unskip, }#1}\gdef\@author{#1}}
\def\keywords#1{\par\medskip
\noindent\textbf{Keywords.} #1}
\theoremstyle{plain}
\newtheorem{Thm}{Theorem}[section]
\newtheorem{Lem}[Thm]{Lemma}
\newtheorem{Cor}[Thm]{Corollary}
\newtheorem{Prop}[Thm]{Proposition}
\newtheorem*{Thm*}{Theorem}
\newtheorem*{claim*}{Claim}
\theoremstyle{definition}
\newtheorem*{Def*}{Definition}
\newtheorem*{Cor*}{Corollary}
\newtheorem{Rem}[Thm]{Remark}
\newtheorem*{Ques*}{Question}
\newcommand{\equ}{equation}
\newcommand{\C}{\mathbb{C}}
\newcommand{\N}{\mathbb{N}}
\newcommand{\R}{\mathbb{R}}
\newcommand{\Z}{\mathbb{Z}}
 \DeclareMathOperator{\dist}{dist}
\DeclareMathOperator{\supp}{supp}
\DeclareMathOperator{\vol}{vol}
\let\nhatoksa=\theenumi
\let\nhatoksb=\labelenumi
\let\nhatoksc=\theenumii
\let\nhatoksd=\labelenumii
\newlength{\nhalengtha}
\newlength{\nhalengthb}
\newlength{\nhalengthc}
\newcommand{\resetenum}{
\let\theenumi=\nhatoksa
\let\labelenumi=\nhatoksb
\let\theenumii=\nhatoksc
\let\labelenumii=\nhatoksd
\setlength{\leftmargini}{\nhalengtha}
\setlength{\leftmarginii}{\nhalengthb}
\setlength{\labelwidth}{\nhalengthc}
}
\newcommand\cc{\mathcal{C}}
\newcommand\ce{\mathcal{E}}
\newcommand\ch{\mathcal{H}}
\newcommand\cl{\mathcal{L}}
\newcommand\cm{\mathcal{M}}
\def\mbs{\mathbb{S}}
\def\msh{\mathscr{H}}
\def\msn{\mathscr{N}}
\def\id{\text{Id}}
\def\ig{\textit{g}}
\def\ov{\overline}
\def\pa {\partial}
\def\op{\oplus}
\def\ot{\otimes}
\def\ka{\kappa}
\def\al{\alpha}
\def\bt{\beta}
\def\de{\delta}
\def\Ga{\Gamma}
\def\ga{\gamma}
\def\lm{\lambda}
\def\om{\omega}
\def\sa{\sigma}
\def\vr{\varepsilon}
\def\va{\varphi}
\def\span{\hbox{span}}
\def\vol{\mathrm{vol}}
\def\real{\hbox{Re}}
\newcommand{\inp}[2]{\left\langle#1,#2\right\rangle}
\def\clf{\C\ell}
\begin{document}

\titlerunning{The spinorial Yamabe problem in product manifolds}

\title{A variational analysis of the spinorial Yamabe equation on product manifolds}

\author{Yannick Sire,\, Tian Xu\footnote{\noindent
 Supported by the National Science Foundation of China (NSFC 11601370)
and the Alexander von
\newline \hspace*{1.5em}
Humboldt Foundation of Germany}}

\date{}

\maketitle

\begin{abstract}
This work is devoted to the analysis of the Yamabe problem on Spin manifolds and some applications to CMC immersions. Despite the efforts of many authors, very little is known on the existence of Yamabe metrics on general Spin manifolds. Motivated to bubbling phenomena for the Riemannian problem and recent multiplicity results in this setting, we investigate special spinorial Yamabe metrics on product manifolds developing a bubbling analysis which has independent interest in the present setting. 

\noindent{\bf MSC 2010:} Primary: 53C27; Secondary: 35R01

\keywords{Spin structure, product manifolds, Spinorial Yamabe problem, bubbling analysis}

\end{abstract}

\tableofcontents

\section{Introduction and setting of the problem}

The well known Yamabe problem seeks for the existence of a constant scalar curvature metric in a given conformal class of Riemannian metrics on a compact manifold. Such a metric can be characterized variationally as a critical point of the Hilbert-Einstein functional on conformal classes. A positive answer to this problem, obtained in a series of steps by H. Yamabe \cite{Yamabe}, T. Aubin \cite{Aubin}, N. Trudinger \cite{Trudinger} and R. Schoen \cite{Schoen1984}, provides at least one minimizer of the Hilbert-Einstein functional in each conformal class. Such a metric is called a Yamabe metric. To determine all the Yamabe metrics in a given conformal
class is generally a very difficult problem particularly when the scalar curvature has positive sign. It is interesting to observe that, generically, minima of the Hilbert-Einstein functional in conformal classes are unique, see \cite{Anderson}. However, in many cases a rich variety of constant scalar curvature metrics arise as critical points that are not necessarily minimizers, and it is a very interesting task to classify those metrics. Multiplicity of solutions of the Yamabe problem has been studied in the literature, especially in product manifolds, several results have been obtained in the special case of products with round spheres, see for instance \cite{HV, Kobayashi, Petean, Schoen1989} and references therein.

In the setting of Spin Geometry, a problem analogous to the Yamabe problem has received increasing attention in recent years.
Several works of Ammann \cite{Ammann, Ammann 2003, Ammann2009} and Ammann, Humbert {\sl et al}  \cite{AGHM, AHA, AHM} provide a framework in which variational methods may be employed.

Let $(M,\ig,\sa)$ be an $m$-dimensional closed spin manifold with a metric $\ig$, a spin structure $\sa:P_{Spin}(M)\to P_{SO}(M)$. With the notation $\rho: Spin(m)\to End(\mbs_m)$ be the spin representation,  we denote  $\mbs(M)=P_{Spin}(M)\times_{\rho}\mbs_m$ the spinor bundle over $M$ and $D_\ig^M: C^\infty(M,\mbs(M))\to C^\infty(M,\mbs(M))$ the Dirac operator (see \cite{Friedrich, Lawson} for more geometric backgrounds). Analogous to the Yamabe invariant, a spin conformal invariant is defined as
\begin{\equ}\label{BHL}
\lm_{min}^+(M,[\ig],\sa):=\inf_{\tilde\ig\in[\ig]}\lm_1^+(\tilde\ig)
\text{Vol}(M,\tilde\ig)^{\frac1m}
\end{\equ}
where $\lm_1^+(\tilde\ig)$ denotes the smallest positive eigenvalue of the Dirac operator $D_{\tilde\ig}^M$ with respect to the conformal metric $\tilde\ig\in[\ig]:=\big\{ f^2\ig:\, f\in C^{\infty}(M),\, f>0 \big\}$. Ammann points out in \cite{Ammann, Ammann2009} that studying critical metrics for this invariant involves similar analytic problems to those appearing in the Yamabe problem. It follows that finding a critical metric of \eqref{BHL} is  equivalent to prove the
existence of a spinor field $\psi\in C^{\infty}(M,\mbs(M))$ minimizing the functional defined by
\begin{\equ}\label{J-ig-functional}
J_\ig(\phi)=\frac{\Big(
\int_M|D_{\ig}^M\phi|^{\frac{2m}{m+1}}d\vol_{\ig}
\Big)^{\frac{m+1}{m}}}{\big|\int_M(D_{\ig}^M\phi,\phi)d\vol_{\ig}\big|}
\end{\equ}
with the Euler-Lagrange equation
\begin{\equ}\label{Dirac-c}
D_\ig^M\psi=\lm_{min}^+(M,[\ig],\sa) |\psi|_\ig^{2^*-2}\psi,
\end{\equ}
where $m^*:=\frac{2m}{m-1}$.

As was pointed out in \cite{Ammann}, standard variational method does not imply the existence of minimizers for $J_\ig$ directly. This is due to the  criticality of the nonlinearity in \eqref{Dirac-c}. Indeed, the exponent $m^*=\frac{2m}{m-1}$ is critical for the corresponding Sobolev embedding. Similar to the argument in solving the Yamabe problem, one might be able to find a criterion which recovers the compactness. It is crucial to note that a spinorial analogue of Aubin's inequality holds (see \cite{AGHM})
\begin{\equ}\label{spinorial Aubin inequ}
\lm_{min}^+(M,[\ig],\sa)\leq \lm_{min}^+(S^m,[\ig_{S^m}],\sa_{S^m})=\frac m2 \om_m^{\frac1m}
\end{\equ}
where $(S^m,\ig_{S^m},\sa_{S^m})$ is the $m$-dimensional sphere equipped with its canonical metric $\ig_{S^m}$ and its standard spin structure $\sa_{S^m}$, and $\om_m$ is the standard volume of $(S^m,\ig_{S^m})$. The criterion obtained in \cite{Ammann} shows that if inequality \eqref{spinorial Aubin inequ} is strict then the spinorial Yamabe problem \eqref{Dirac-c} has a nontrivial solution minimizing the functional $J_\ig$. However, the strict inequality in \eqref{spinorial Aubin inequ} is only verified for some special cases and general results are still missing (cf. \cite{ADHH, AHM, Ginoux}).

Tightly related to geometric data, the nonlinear problem \eqref{Dirac-c} provides a strong tool for showing the existence of constant mean curvature hypersurfaces in Euclidean spaces. This has been known as the \textit{Spinorial Weierstra\ss\ representation}, see for instance \cite{KuSt, Friedrich1998, Ammann}. This is one of the most attractive features of the spinorial Yamabe problem that unseals new researches in both PDE theory and Riemannian geometry.
Not being confined by the strict inequality in \eqref{spinorial Aubin inequ}, the purpose of this paper is to establish some existence of multiple solutions of \eqref{Dirac-c} on products of compact spin manifolds which are not necessarily minimizers of the functional defined in \eqref{J-ig-functional}.

Let us describe our results more precisely. Given closed spin manifolds $(M_1,\ig^{(1)},\sa_1)$ and $(M_2,\ig^{(2)},\sa_2)$, with fixed spin structures, we consider a family of metrics $\ig_\ell$ on the product $N=M_1\times M_2$ defined by $\ig_\ell=\ell^2\ig^{(1)}\op \ig^{(2)}$, $\ell>0$. Noting $m_1$ and $m_2$  the dimensions of $M_1$ and $M_2$ respectively, we will be interested in solutions of the (normalized) spinorial Yamabe equation on the product manifold $(N,\ig_\ell)$:
\begin{\equ}\label{Dirac-product}
D_{\ig_\ell}^N\phi=|\phi|_{\ig_\ell}^{\frac2{n-1}}\phi
\end{\equ}
where $n:=\dim N=m_1+m_2$. For simplicity we first describe here the case $n$ is odd, but the statement is similar for $n$ even, see Theorem \ref{result for n geq 3}. Without loss of generality, we may assume $m_1$ is even (otherwise, it is equivalent to consider the product manifold equipped with the metrics $\ig^{(1)}\op\ell^2\ig^{(2)}$). In this setting, the spinor bundle over $N$ can be identified with $\mbs(N)=\mbs(M_1)\otimes\mbs(M_2)$ and the Dirac operator is given by
\[
D_{\ig_\ell}^N(\psi\otimes\va)=D^{M_1}_{\ell^2\ig^{(1)}}\psi\otimes\va
+\om_\C^{M_1}\cdot_{\mbox{\tiny $\ell^2\ig^{(1)}$}}\psi\otimes D_{\ig^{(2)}}^{M_2}\va
\]
for $\psi\in C^{\infty}(M_1,\mbs(M_1))$ and $\va\in C^{\infty}(M_2,\mbs(M_2))$, where $D^{M_1}_{\ell^2\ig^{(1)}}$ and $D_{\ig^{(2)}}^{M_2}$ denote the Dirac operators on $M_1$ and $M_2$ respectively, $\om_\C^{M_1}$ is the chirality operator in the Clifford bundle over $M_1$ and "$\cdot_{\mbox{\tiny $\ell^2\ig^{(1)}$}}$" is the representation of Clifford multiplication on the spinor bundle $\mbs(M_1)$ with respect to the metric $\ell^2\ig^{(1)}$ (see Section \ref{preliminaries} for detailed definitions of these notations). One can use a specials ansatz for the solutions $\phi=\psi\otimes\va_{\lm}$, $\lm>0$, with $\va_\lm$ being an eigenspinor of the second factor, i.e. $D_{\ig^{(2)}}^{M_2}\va_{\lm}=\lm \va_{\lm}$. A further assumption on such an ansatz is that the component $\va_\lm$ is normalizable in the sense one can normalize $\va_\lm$ so that $|\va_\lm|_{\ig^{(2)}}\equiv1$ almost everywhere with respect to the canonical measure on $(M_2,\ig^{(2)})$. Then $\phi=\psi\otimes\va_{\lm}$ solves the Yamabe equation \eqref{Dirac-product} if and only if $\psi$ solves
\begin{\equ}\label{reduced-Dirac}
D^{M_1}_{\ell^2\ig^{(1)}}\psi+\lm\om_\C^{M_1}\cdot_{\mbox{\tiny $\ell^2\ig^{(1)}$}}\psi=|\psi|_{\ell^2\ig^{(1)}}^{\frac2{m-1}}\psi
\quad \text{on } M_1.
\end{\equ}

One of the motivations of the present work comes from the geometric bifurcation theory in order to prove multiplicity and qualitative behaviour of Riemannian Yamabe metrics. This problem goes back to the seminal work of Schoen \cite{Schoen1989} and has been deeply investigated in several works (see e.g. \cite{BP1,BP2,BPS} and references therein). Together with Bettiol and Piccione \cite{BPSire}, the first author investigated the multiplicity of constant $Q-$curvature metrics in a similar product manifold as in our setting but in the framework of Berger spheres. In there, the authors considered Gromov-Hausdorff limits of Einstein Riemannian minimal submersions under a parameter $\ell$. The method used in geometric bifurcation theory is the use of a general bifurcation result that detects bifurcations under a jump of the Morse index of the Jacobi operator. From this point of view, our problem is substantially more difficult than the Riemannian case since the functional is strongly indefinite, i.e. the linearization has an infinite number of eigenvalues and such a criterion for bifurcating solutions cannot be used.  Another issue in the spinorial setting is that there is no significant difference in showing one solution and multiple solutions, unless one can find a way to distinguish these solutions (not simply by energies). The method developed here allows actually to distinguish the solutions.

We will find solutions to the spinorial Yamabe equation \eqref{Dirac-product} by solving \eqref{reduced-Dirac}. The point here is that the problem becomes subcritical and the existence of solutions is easy to prove by variational techniques. Our strategy is the following: 

\begin{itemize}
\item We first construct for an infinite number of values of the parameter $\ell$  a solution to \eqref{reduced-Dirac}.
\item Then, we investigate the bubbling phenomenon as $\ell \to \infty$.
\item Coming back to the original problem \eqref{Dirac-product}, a quantization formula allows to distinguish each of these solutions.
\end{itemize}

We now state our main results. The first one is the description in dimension greater or equal to $3$ of the infinite family of solutions of the spinorial Yamabe problem under consideration (see Theorem \ref{result for n geq 3} for a more precise statement) .

\begin{Thm}\label{mainIntro1}
There exists $\ell_0>0$ (possibly depending on $\lm$) such that for any $\ell>\ell_0$ there is a non-trivial solution $\psi_\ell$ of \eqref{reduced-Dirac} which is highly concentrated in the sense that, as $\ell\to\infty$, there exists a converging sequence $\xi_\ell\to\xi_0\in M_1$ such that $|\psi_\ell(\xi_\ell)|\to+\infty$ and $|\psi_\ell|\to0$ uniformly on compact subsets of $M_1\setminus\{\xi_0\}$.

Furthermore, the spinor field $\phi_{\ell,\lm}:=\psi_\ell\otimes\va_\lm$ defines a generalized conformal metric $\ig_{\ell,\lm}=|\phi_{\ell,\lm}|_{\ig_\ell}^{\frac4{n-1}}\ig_{\ell}$ on $N=M_1\times M_2$ (in the sense of Ammann \cite[Section 3]{Ammann2009}) such that
    \[
    \lim_{\ell\to\infty}\frac{\text{Vol}(N,\ig_{\ell,\lm})}{\lm^{m_2}}=C_{M_1,M_2}
    \]
    where $C_{M_1,M_2}>0$ is a constant depending only on $(M_1,\ig^{(1)})$ and $(M_2,\ig^{(2)})$.
\end{Thm}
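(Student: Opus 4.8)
The plan is to turn \eqref{reduced-Dirac} into a \emph{subcritical} variational problem carrying a large parameter, produce a ground state by elementary variational methods, and then perform a blow-up analysis as $\ell\to\infty$ that yields simultaneously the concentration statement and the limiting volume.

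For fixed $\ell$, equation \eqref{reduced-Dirac} is the Euler--Lagrange equation of the strongly indefinite functional
\[
\mathcal{L}_\ell(\psi)=\frac12\int_{M_1}\big\langle\big(D^{M_1}_{\ell^2\ig^{(1)}}+\lm\,\om_\C^{M_1}\big)\psi,\psi\big\rangle\,d\vol_{\ell^2\ig^{(1)}}-\frac{n-1}{2n}\int_{M_1}|\psi|^{\frac{2n}{n-1}}\,d\vol_{\ell^2\ig^{(1)}}
\]
on $H^{1/2}(M_1,\mbs(M_1))$. Since $m_1$ is even, $\om_\C^{M_1}$ anticommutes with Clifford multiplication, hence with $D^{M_1}_{\ell^2\ig^{(1)}}$, so $\big(D^{M_1}_{\ell^2\ig^{(1)}}+\lm\om_\C^{M_1}\big)^2=\big(D^{M_1}_{\ell^2\ig^{(1)}}\big)^2+\lm^2$; thus the quadratic part has a spectral gap $(-\lm,\lm)$ around $0$ and $H^{1/2}$ splits along the two spectral subspaces. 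Because $\frac{2n}{n-1}<\frac{2m_1}{m_1-1}$ the nonlinearity is subcritical, the embedding $H^{1/2}(M_1)\hookrightarrow L^{2n/(n-1)}(M_1)$ is compact, the Palais--Smale condition holds, and a generalized linking theorem (or minimization over the generalized Nehari manifold) produces a ground-state solution $\psi_\ell$ of \eqref{reduced-Dirac} at a level $c_\ell=\mathcal{L}_\ell(\psi_\ell)>0$. This step is routine.

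The substance is the asymptotics as $\ell\to\infty$. Then $(M_1,\ell^2\ig^{(1)})$ has injectivity radius tending to $\infty$ and curvature tending to $0$, so locally it converges to flat $\R^{m_1}$. Since $\frac{2}{n-1}$ is subcritical on $\R^{m_1}$, the equation $D_0 W=|W|^{\frac2{n-1}}W$ (with $D_0$ the flat Dirac operator) has only the trivial solution by a Pohozaev identity; hence $\psi_\ell$ can neither concentrate at a scale tending to $0$ (the mass term would disappear from the limit equation) nor spread out ($c_\ell$ is bounded below), so it concentrates, in $\ell^2\ig^{(1)}$-distance, at a fixed scale around a point $\xi_\ell\to\xi_0\in M_1$, the rescaled profiles converging in $C^1_{\loc}(\R^{m_1})$ to a ground state $W_\lm$ of the massive Dirac equation $D_0 W+\lm\,\om_\C\cdot W=|W|^{\frac2{n-1}}W$ on $\R^{m_1}$. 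Carrying this out requires: (i) existence of $W_\lm$ with exponential decay (concentration--compactness in the translation-invariant setting, the decay coming from the spectral gap of $D_0+\lm\om_\C$); (ii) sharp upper/lower bounds on $c_\ell$, the upper one by testing $\mathcal{L}_\ell$ with a cut-off rescaled copy of $W_\lm$ centered at an arbitrary point; (iii) a blow-up argument based on interior $L^p$-estimates for the Dirac operator and the subcriticality; and (iv) an energy quantization step --- a Pohozaev--Rellich identity adapted to $D^{M_1}_{\ell^2\ig^{(1)}}$ --- ruling out energy loss in the neck and secondary bubbles, so that the whole $L^{2n/(n-1)}$-energy of $\psi_\ell$ concentrates at $\xi_0$. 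Together with the exponential decay, and after passing to the conformal identification $\mbs(M_1,\ell^2\ig^{(1)})\cong\mbs(M_1,\ig^{(1)})$, this gives exactly the stated picture: $|\psi_\ell(\xi_\ell)|\to+\infty$ and $|\psi_\ell|\to0$ uniformly on compact subsets of $M_1\setminus\{\xi_0\}$.

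Finally, since $\phi_{\ell,\lm}=\psi_\ell\otimes\va_\lm$ with $|\va_\lm|_{\ig^{(2)}}\equiv1$ we have $|\phi_{\ell,\lm}|_{\ig_\ell}=|\psi_\ell|_{\ell^2\ig^{(1)}}$ and $d\vol_{\ig_\ell}=d\vol_{\ell^2\ig^{(1)}}\,d\vol_{\ig^{(2)}}$, hence
\[
\text{Vol}(N,\ig_{\ell,\lm})=\int_N|\phi_{\ell,\lm}|_{\ig_\ell}^{\frac{2n}{n-1}}\,d\vol_{\ig_\ell}=\text{Vol}(M_2,\ig^{(2)})\int_{M_1}|\psi_\ell|_{\ell^2\ig^{(1)}}^{\frac{2n}{n-1}}\,d\vol_{\ell^2\ig^{(1)}}.
\]
By the concentration analysis the last integral converges to $\int_{\R^{m_1}}|W_\lm|^{\frac{2n}{n-1}}$; writing the ground state of the $\lm$-weighted problem as $W_\lm(y)=\lm^{\frac{n-1}{2}}W_1(\lm y)$, where $W_1$ is the $\lm$-independent ground state of $D_0W_1+\om_\C\cdot W_1=|W_1|^{\frac2{n-1}}W_1$, a change of variables together with the identity $\frac{q}{q-2}=n$ for $q=\frac{2n}{n-1}$ gives $\int_{\R^{m_1}}|W_\lm|^{\frac{2n}{n-1}}=\lm^{\,n-m_1}\int_{\R^{m_1}}|W_1|^{\frac{2n}{n-1}}=\lm^{m_2}\int_{\R^{m_1}}|W_1|^{\frac{2n}{n-1}}$. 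Therefore $\lim_{\ell\to\infty}\text{Vol}(N,\ig_{\ell,\lm})/\lm^{m_2}=\text{Vol}(M_2,\ig^{(2)})\int_{\R^{m_1}}|W_1|^{\frac{2n}{n-1}}=:C_{M_1,M_2}>0$, which depends only on $\text{Vol}(M_2,\ig^{(2)})$ and on $\dim M_1$, $\dim M_2$. I expect step (iv), the exact energy quantization, to be the main obstacle, together with the construction and decay estimates for the limiting ground state $W_\lm$; these are precisely what upgrade a qualitative ``the volume stays bounded'' into the sharp identity.
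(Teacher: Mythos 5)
Your proposal is correct in its main lines and follows the same overall strategy as the paper: reduce to a subcritical, strongly indefinite problem on $M_1$ with a spectral gap $(-\lm,\lm)$ created by $\lm\om_\C$, produce a ground state, identify the blow-up profile with a ground state of $\tilde D_{\ig_{\R^{m_1}}}W+\lm\om_\C\cdot W=|W|^{\frac2{n-1}}W$, and read off the volume from the $\lm$-scaling of that bubble. Your computation $W_\lm(y)=\lm^{\frac{n-1}2}W_1(\lm y)$, $\int|W_\lm|^{n^*}=\lm^{m_2}\int|W_1|^{n^*}$ is exactly the paper's Proposition \ref{euclidean prop} in disguise (there $\ga(\nu,\ka)=\nu^{-(m-1)+\frac2{p-2}}\ka^{-\frac2{p-2}}\ga(1,1)$ and the potential $\al\equiv\lm^{m_2}$), and your constant $C_{M_1,M_2}=\mathrm{Vol}(M_2,\ig^{(2)})\int_{\R^{m_1}}|W_1|^{n^*}=2n\,\ga(1,1)\,\mathrm{Vol}(M_2,\ig^{(2)})$ agrees with Theorem \ref{result for n geq 3}. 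The two genuine differences are these. First, you work on the dilated manifold $(M_1,\ell^2\ig^{(1)})$, whereas the paper conformally rescales (Proposition \ref{conformal formula} plus the substitution $\psi\mapsto\ell^{-\frac{m_1-1}2}\psi$) to the semiclassical equation $\vr\tilde D_{\ig}\psi+\lm\theta\om_\C\cdot\psi=\theta^{m_1-\frac{m_1-1}2 n^*}|\psi|^{n^*-2}\psi$ on the \emph{fixed} manifold with $\vr=\ell^{-1}\to0$; this is only a change of viewpoint, but it is what makes the paper's uniform-in-$\vr$ Sobolev constant (Lemma \ref{embeding lemma}) and the covering/Lions argument of Lemma \ref{step1} clean, and it is also where the pointwise blow-up $|\psi_\ell(\xi_\ell)|\sim\ell^{\frac{m_1-1}2}\to\infty$ comes from, via the conformal weight you only allude to. Second, and more substantively, you designate the energy quantization step (iv) -- a Pohozaev--Rellich identity ruling out neck energy and secondary bubbles -- as the main obstacle; the paper avoids this entirely. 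Because the solution is taken at the min--max/Nehari level, the sharp upper bound $\limsup_{\vr\to0}\ga_\vr\le\al_{min}\ga(1,1)$ obtained from a single cut-off rescaled bubble (Lemmas \ref{d-cl-vr to 0}--\ref{cl to ga}, Corollary \ref{key corollary}), combined with Fatou's lemma applied to the extracted profile, forces all the energy into one bubble: a second bubble or residual mass would give energy $\ge2\al_{min}\ga(1,1)$. This soft comparison also replaces your appeal to a nonexistence result for the massless subcritical equation and to exponential decay of $W_\lm$, neither of which is established (or needed) in the paper; since Pohozaev-type identities for Dirac operators are delicate, I would recommend restructuring your argument around the ground-state energy comparison rather than quantization.
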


An interesting corollary of the above statements is for the case $N=M_1\times S^{1}$. Since all eigenspinors on $S^1$ are normalizable, substitute different eigenvalues of $\lm$ in \eqref{reduced-Dirac}, one obtains multiple solutions for large $\ell$ as the volume functional $\text{Vol}(N,\ig_{\ell,\lm})$ can be distinguished by varying the values of $\lm$.

The case $m=2$ is of particular interests because we are concerned with the $2$-dimensional torus, i.e. $N=S^1\times S^1$ equipped with the family of metrics $\ig_\ell:=\ell^2 d^2t\op d^2\tau$ with $(t,\tau)\in[0,2\pi]\times[0,2\pi]$ being the standard parameterizations.

Our second theorem is an application of the previous analysis. We refer the reader to Theorem \ref{thm tours} for a precise statement.
\begin{Thm}\label{mainIntro2}
There exists  a non-trivial solution of constant length $\lm$ of \eqref{reduced-Dirac} for all $\ell>0$.

Furthermore, at all value $\ell^*\in\big\{\frac1{2\lm},\frac2{2\lm},\dots,\frac n{2\lm},\dots\big\}$, there is a bifurcating branch of solutions issuing from the constant length solution branch, and these branches consist of periodic solutions that do not have the same fundamental period;

Finally if $N=S^1\times S^1$ is equipped with the so-called non-trivial spin structure $\sa_N^*$ then the strict inequality in \eqref{spinorial Aubin inequ} is valid, i.e.
    \[
   \lm_{\min}^+(N,\ig_\ell,\sa_N^*)<2\sqrt\pi
    \]
    for all $\ell>0$.

\end{Thm}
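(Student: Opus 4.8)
The plan is to prove the three assertions in order, leaning on the general machinery of Theorem \ref{mainIntro1} and its refined version (Theorem \ref{result for n geq 3}) for the analytic input, and then carrying out the specifically two-dimensional arguments for the bifurcation and the strict inequality. For the first assertion, I would observe that on $N = S^1 \times S^1$ with $\ig_\ell = \ell^2\, d^2t \op d^2\tau$, the second factor $M_2 = S^1$ has, for each $k\in\Z$ (depending on the chosen spin structure on the $\tau$-circle), an eigenspinor $\va_\lm$ of the Dirac operator $D^{M_2}_{\ig^{(2)}}$ with eigenvalue $\lm$ of the form $e^{i\lm\tau}$ times a constant spinor, so that $|\va_\lm|_{\ig^{(2)}}\equiv 1$ — every eigenspinor on $S^1$ is normalizable. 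Hence the ansatz $\phi = \psi\ot\va_\lm$ reduces \eqref{Dirac-product} to \eqref{reduced-Dirac} on $M_1 = S^1$, which is now a \emph{one-dimensional} ODE system. Here I would look for the constant-length solution explicitly: seek $\psi$ with $|\psi|_{\ell^2\ig^{(1)}}\equiv c$ constant, which turns the nonlinear term $|\psi|^{2/(m-1)}\psi$ into $c^{2/(m-1)}\psi$ (with $m = m_1 = 1$, so the exponent is formally handled by the dimensional reduction conventions fixed earlier), reducing \eqref{reduced-Dirac} to a \emph{linear} equation $D^{M_1}_{\ell^2\ig^{(1)}}\psi + \lm\,\om_\C^{M_1}\cdot\psi = c^{2/(m-1)}\psi$; since the Dirac operator on a circle has explicit eigenspinors, one can solve this for every $\ell > 0$ and fix $c$ by the normalization. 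This gives the branch $\ell \mapsto \psi_\ell$ of constant-length solutions valid for \emph{all} $\ell>0$, not merely $\ell$ large.

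For the second assertion, the strategy is a bifurcation-from-a-branch argument. I would set up the reduced functional whose critical points are solutions of \eqref{reduced-Dirac} (the subcritical variational problem on the compact $1$-manifold $S^1$), linearize along the constant-length branch $\psi_\ell$, and compute the spectrum of the linearized operator $L_\ell$ as a function of $\ell$. Because $M_1 = S^1$ with metric $\ell^2 d^2 t$, the eigenvalues of the relevant linear Dirac-type operator are arithmetic progressions whose spacing scales like $1/\ell$; a kernel of $L_\ell$ appears precisely when one of these eigenvalues crosses zero, which happens at the values $\ell^* \in \{\frac{j}{2\lm} : j = 1, 2, \dots\}$. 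At each such $\ell^*$ I would check the transversality (nondegeneracy of the crossing) and verify that the kernel is finite-dimensional and odd-symmetric, so that a Crandall–Rabinowitz / Rabinowitz global bifurcation theorem applies and produces a nontrivial branch issuing from $(\ell^*, \psi_{\ell^*})$. To see that the bifurcating solutions have a different fundamental period, I would note that the kernel element at $\ell^* = \frac{j}{2\lm}$ oscillates with a period $2\pi/j$ along the $t$-circle, so the bifurcating solutions inherit a genuinely smaller fundamental period than the constant-length branch (which is $2\pi$-periodic with constant length); this distinguishes them as claimed. The main obstacle here is the strongly indefinite nature of the spinorial functional: the linearization has infinitely many eigenvalues of both signs, so the Morse-index jump criterion used in the Riemannian bifurcation literature is unavailable, and I would need to localize to the eigenvalue actually crossing zero — e.g. via a Lyapunov–Schmidt reduction onto the finite-dimensional near-kernel — before applying the classical bifurcation theorems.

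For the third assertion, the strict spinorial Aubin inequality $\lm_{\min}^+(N,\ig_\ell,\sa_N^*) < 2\sqrt\pi = \frac{n}{2}\om_n^{1/n}$ with $n = 2$ (since $\om_2 = 4\pi$ gives $\frac22(4\pi)^{1/2} = 2\sqrt\pi$), I would use the variational characterization: it suffices to exhibit a single test spinor $\phi$ on $(N,\ig_\ell,\sa_N^*)$ with $J_{\ig_\ell}(\phi) < (2\sqrt\pi)^2 = 4\pi$, equivalently a nontrivial solution of \eqref{Dirac-product} (or a near-solution) whose Rayleigh quotient beats the sphere value. The natural candidate is exactly the product spinor $\phi = \psi\ot\va_\lm$ from the first assertion: since for the nontrivial spin structure $\sa_N^*$ the Dirac operator has no harmonic spinors and the smallest positive eigenvalue is strictly positive and explicitly computable on the flat torus, I would compute $J_{\ig_\ell}(\psi\ot\va_\lm)$ directly — the volume of $(M_1,\ell^2\ig^{(1)})$ and $(M_2,\ig^{(2)})$ and the eigenvalue $\lm$ all enter explicitly — and verify the strict inequality for every $\ell>0$, choosing $\lm$ to be the smallest available eigenvalue for the spin structure on the second circle. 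Alternatively, and perhaps more robustly, one invokes Ammann's criterion in reverse: the existence, for all $\ell>0$, of the nontrivial solution constructed in the first assertion, together with the fact that its energy equals $\lm_{\min}^+$ only in the extremal case, forces the strict inequality. The delicate point is ensuring the computation is uniform in $\ell$; I expect the quantities to rescale cleanly because the torus is flat, so the inequality should in fact be strict with a margin independent of $\ell$.
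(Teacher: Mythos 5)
Your treatment of the first two assertions is workable but follows a genuinely different route from the paper. The paper does not linearize along the branch and invoke Crandall--Rabinowitz; instead it reduces \eqref{eqs0} via the ansatz $\psi=(\psi_1\op\psi_2)\ot\va_{S^1}$ to the real ODE system \eqref{eqs2}, integrates it \emph{explicitly} by quadrature (the first integral $\sqrt{f^2-g^2}=\tfrac1{2\lm}f^2+K$ parametrizes all periodic orbits by $K\in(0,\tfrac{\lm}2]$, with $K=\tfrac{\lm}2$ the constant solution), and then proves in Lemma \ref{etaK} that the half-period $\eta_K(s_1)$ decreases monotonically from $+\infty$ to $\tfrac{\pi}{2\lm}$ as $K$ runs over $(0,\tfrac{\lm}2)$. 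The bifurcation values $\ell^*=\tfrac{k}{2\lm}$ and the exact count of $d+1$ inequivalent solutions, each with fundamental period $\tfrac{2\pi\ell}{k}$, fall out of solving $\eta_K(s_1)=\tfrac{\pi\ell}{k}$. Your abstract bifurcation scheme identifies the same critical values of $\ell$, but it only yields local branches, and you would additionally have to handle the symmetry degeneracy of the kernel (the $t$-translation and the phase action $e^{i\vartheta}$ make the crossing eigenvalue non-simple), which rules out the plain Crandall--Rabinowitz theorem. The paper's explicit integration buys the global branch structure and, crucially, the volume formulas \eqref{f0} and \eqref{fk} needed later.

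For the third assertion there is a genuine gap. Your proposed test spinor is the product of the \emph{constant-length} solution with $\va_\lm$, but its conformal volume is ${\rm Vol}(N,|\phi_0|^4\ig)=4\pi^2\lm^2\ell$, which grows linearly in $\ell$ and exceeds $4\pi$ once $\ell>\tfrac4{\pi^2\lm^2}\pi^{-1}\cdot\pi^2\lm^2$, i.e.\ for $\lm=\tfrac12$ once $\ell>\tfrac4\pi$. So your expectation that "the inequality should be strict with a margin independent of $\ell$" fails for this test spinor, and the fallback argument ("existence of a nontrivial solution forces strict inequality") is not valid either: a solution only gives the non-strict bound $\lm_{\min}^+\leq{\rm Vol}(N,|\phi|^{4}\ig)^{1/2}$, with no mechanism to make it strict. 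The missing ingredient is precisely the paper's Lemma preceding Theorem \ref{thm tours}: for $\ell>\tfrac1{2\lm}$ one must use the \emph{non-constant} $2\pi\ell$-periodic solution $\phi_{\ell,1}$ and prove the uniform bound ${\rm Vol}(N,|\phi_{\ell,1}|^4\ig)<8\pi\lm$, which for the nontrivial spin structure ($\lm=\tfrac12$) gives exactly $<4\pi$ and hence $\lm_{\min}^+(N,\ig_\ell,\sa_N^*)<2\sqrt\pi$ for all $\ell$; for small $\ell$ the constant solution's volume $\pi^2\ell$ suffices. Establishing that uniform bound requires the delicate monotonicity computation in the variable $\de=\sqrt{\lm^2-2\lm K}$ carried out in the paper, and nothing in your plan supplies it.
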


The assumption that there exists normalizable eigenspinor on $M_2$ is rather harmless. This is satisfied by a large class of manifolds including the circle, the spheres and any spin $m$-manifold which can be immersed into $\R^{m+1}$ with constant mean curvature (see \cite[Chapter 5]{Ammann} and \cite{ADHH, AHM}).

\section{The setting}\label{preliminaries}

\subsection{Some algebraic preliminaries}\label{algebraic settings}

Our aim is to derive the Dirac operator on Riemannian products of spin manifolds.
In particular, we have to compare the spinor bundle of the ambient space with
the spinor bundles of the factor manifolds. The starting point is the splitting of
the tangent bundle of the large manifold into direct sum of two vector bundles associated
with the two factors.  Instructional materials can be found in \cite[Chapter I. 5 and II. 7]{Lawson},
but here we want to make it more explicit.

Let us denote by $\{e_1,\dots, e_m\}$ the canonical basis of an oriented
Euclidean space $V$ and by $\clf(V)$ the complex Clifford algebra of $V$
with its multiplication being denoted by "$\cdot$".
In case the dimension $m$ of $V$ is even, i.e. $m=2k$, the Clifford algebra
is isomorphic to the algebra $\cm(2^k;\C)$ of all complex matrices of rank $2^k$. Hence
$\clf(V)$ has precisely one irreducible module, the spinor module $\mbs_{2k}$ with $\dim\mbs_{2k}=2^k$.
For ease of notations, we simply write the Clifford representation as
\[
\clf(V)\ot\mbs_{2k}\to  \mbs_{2k}, \quad
\xi\ot \psi\mapsto \xi\cdot\psi.
\]
When restricted this representation to the even subalgebra $\clf^0(V)$,
the module $\mbs_{2k}$ splits into two irreducible unitary representations
$\mbs_{2k}=\mbs_{2k}^+\op\mbs_{2k}^-$, given by the eigensubspaces
of the endomorphism $\om_\C:=i^k e_1\cdots e_m$ to the eigenvalues $\pm1$.
In the sequel, we can call $\om_\C$ the "chirality operator" or the
"complex volume element".

In case $m$ is odd, that is $m=2k+1$, the Clifford algebra $\clf(V)$ is isomorphic
to $\cm(2^k;\C)\op\cm(2^k;\C)$. And thus, we obtain two $2^k$-dimensional
irreducible spinor modules $\mbs^0_{2k+1}$ and $\mbs^1_{2k+1}$ if we project the
Clifford multiplication onto the first and second component respectively.
Similar to the splitting in even dimensions, the two modules $\mbs^0_{2k+1}$
and $\mbs^1_{2k+1}$ can be distinguished by the action of the chirality operator
$\om_\C:=i^{k+1} e_1\cdots e_m$ in the sense that on $\mbs^j_{2k+1}$ it acts as
$(-1)^j$, $j=0,1$. It will cause no confusion if we simply identify $\mbs^0_{2k+1}$
and $\mbs^1_{2k+1}$ as the same vector space, that is
$\mbs_{2k+1}=\mbs^0_{2k+1}=\mbs^1_{2k+1}$, and equip them with
Clifford multiplications of opposite sign.

Now let $V$ and $W$ be two oriented Euclidean spaces with $\dim V=m_1$
and $\dim W=m_2$. We denote $\clf(V)$ and $\clf(W)$ the associated Clifford
algebras of $V$ and $W$ respectively. By abuse of notation, we use the same
symbol "$\cdot$" for the Clifford multiplication in $\clf(V)$, $\clf(W)$
and in their representations.
As is well known, the Clifford algebra of the sum of two vector spaces is the
$\Z_2$-graded tensor product of the Clifford algebras of the two summands,
that is $\clf(V\op W)=\clf(V)\widehat\ot\clf(W)$ (see \cite{Lawson}).
Therefore, we can construct the spinor module of $V\op W$ from those of
$V$ and $W$ as
\begin{\equ}\label{spinor modules}
\mbs_{m_1+m_2}=\left\{
\aligned
&(\mbs_{m_1}\op\mbs_{m_1})\ot \mbs_{m_2} &\quad & \text{both } m_1 \text{ and } m_2 \text{ are odd}, \\
&\qquad \mbs_{m_1}\ot\mbs_{m_2} &\quad & m_1 \text{ is even}.
\endaligned \right.
\end{\equ}
Here, we have excluded the case $m_1$ is odd and $m_2$ is even. This is simply because
the place of $V$ and $W$ can be interchanged, which suggests that this case is symmetric to the case $m_1$ is even and $m_2$ is odd.
As for the representation of Clifford multiplications on $\mbs_{m_1+m_2}$, let $\xi\in V$, $\zeta\in W$, $\va\in\mbs_{m_2}$
and $\psi=\psi_1\op\psi_2\in \mbs_{m_1}\op\mbs_{m_1}$ for both $m_1$ and $m_2$ are odd and $\psi\in\mbs_{m_1}$ otherwise, we set
\begin{\equ}\label{clifford multiplication}
(\xi\op\zeta)\cdot (\psi\ot\va)=(\xi\cdot\psi)\ot\va+(\om_\C^V\cdot\psi)\ot (\zeta\cdot\va),
\end{\equ}
where for both $m_1$ and $m_2$ odd we set $\xi\cdot\psi=(\xi\cdot\psi_1)\op(-\xi\cdot\psi_2)$
and $\om_\C^V\cdot\psi=i(\psi_2\op-\psi_1)$. With this notation, one easily checks
\[
(\xi\op\zeta)\cdot(\xi\op\zeta)\cdot (\psi\ot\va)=-|\xi\op\zeta|^2(\psi\ot\va).
\]
Thus $\mbs_{m_1+m_2}$ is a nontrivial $\clf(V\op W)$-module of dimension
$2^{[\frac{m_1+m_2}{2}]}$. Moreover, in case $m_1+m_2$ is even, the splitting of
$\mbs_{m_1+m_2}$ into half-spinor modules is given by
\[
\mbs_{m_1+m_2}^+=\big\{ (\psi\op\psi)\ot\va:\, \psi\in\mbs_{m_1},\ \va\in\mbs_{m_2}\big\},
\]
\[
\mbs_{m_1+m_2}^-=\big\{ (\psi\op-\psi)\ot\va:\, \psi\in\mbs_{m_1},\ \va\in\mbs_{m_2}\big\}
\]
for both $m_1$ and $m_2$ odd and
\[
\mbs_{m_1+m_2}^+=(\mbs_{m_1}^+\ot\mbs_{m_2}^+)\op(\mbs_{m_1}^-\ot\mbs_{m_2}^-),
\]
\[
\mbs_{m_1+m_2}^-=(\mbs_{m_1}^+\ot\mbs_{m_2}^-)\op(\mbs_{m_1}^-\ot\mbs_{m_2}^+)
\]
for both $m_1$ and $m_2$ even.

\begin{Rem}
The construction of the Clifford multiplication over $\mbs_{m_1+m_2}$ is a subtle
issue. Comparing with the explicit formula \eqref{clifford multiplication}, there are different
ways to define the Clifford multiplication. For instance, in case both $m_1$ and $m_2$ are odd, let $\xi\in V$, $\zeta\in W$, $\va\in\mbs_{m_2}$
and $\psi=\psi_1\op\psi_2\in \mbs_{m_1}\op\mbs_{m_1}$, we can use the same expression of \eqref{clifford multiplication} but replace the previous definition of $\xi\cdot\psi$ with a new one $\xi\cdot \psi=(-\xi\cdot \psi_2)\op(-\xi\cdot\psi_1)$
\ (for a close reference, we refer \cite{Bar1998}). In this setting, the half-spinor modules of $\mbs_{m_1+m_2}$ are
\[
\mbs_{m_1+m_2}^+=(\mbs_{m_1}\op\{0\})\ot\mbs_{m_2},
\]
\[
\mbs_{m_1+m_2}^-=(\{0\}\op\mbs_{m_1})\ot\mbs_{m_2}.
\]
It would be better to understand that such changes in product formula give us equivalent
definitions of Clifford multiplications. Indeed, due to the uniqueness of
$\clf(V\op W)$, any definition of the Clifford
multiplication on $\mbs_{m_1+m_2}$ can be identified with \eqref{clifford multiplication}
via a vector space isomorphism.
\end{Rem}

\medskip

Next, let us consider the manifold setting.
Let $(M_1,\ig^{(1)})$ and $(M_2,\ig^{(2)})$ be two oriented Riemannian manifolds of dimensions $m_1$ and $m_2$, respectively. We henceforth suppose that both manifolds are
equipped with a fixed spin structure (for details about spin structures, we refer to
\cite{Friedrich, Lawson} or to the well written self-contained introduction \cite{Hij99}).
This induces a unique spin structure on the
Riemannian product $(N=M_1\times M_2,\ig=\ig^{(1)}\op\ig^{(2)})$.
Indeed, let $\pi_{M_1}$ and $\pi_{M_2}$ denote the projections
on $M_1$ and $M_2$, the tangent bundle of $N$ can be decomposed as
\[
TN=\pi_{M_1}^*TM_1\op \pi_{M_2}^*TM_2.
\]
For simplicity, we omit the projections and write $TN=TM_1\op TM_2$.
And such splitting is orthogonal with respect to $\ig$. Hence
the frame bundle of $N$ can be reduced to a $SO(m_1)\times SO(m_2)$-principal
bundle, and this is isomorphic to the product of the frame bundles over $M_1$ and $M_2$.

\subsection{The Dirac operator}
Fix the spin structures $\sa_{M_1}$ and $\sa_{M_2}$,
let us consider the Clifford bundles (with Clifford multiplications)
$(\C l(M_1),\cdot_{\mbox{\tiny $\ig^{(1)}$}})$, $(\C l(M_2),\cdot_{\mbox{\tiny $\ig^{(2)}$}})$ and spinor bundles
$\mbs(M_1)$, $\mbs(M_2)$ over $M_1$ and $M_2$ respectively.
From the previous considerations in the algebraic settings, we know for the spinor
bundles that
\[
\mbs(N)=\left\{
\aligned
&(\mbs(M_1)\op\mbs(M_1))\ot \mbs(M_2) &\quad & \text{both } m_1 \text{ and } m_2 \text{ are odd}, \\
&\qquad \mbs(M_1)\ot\mbs(M_2) &\quad & m_1 \text{ is even}.
\endaligned \right.
\]
For $X\in TM_1$, $Y\in TM_2$, $\va\in\Ga(\mbs(M_2))$ and
$\psi=\psi_1\op\psi_2\in \Ga(\mbs(M_1)\op\mbs(M_1))$ for both
$m_1$ and $m_2$ odd and $\psi\in\Ga(\mbs(M_1))$ for $m_1$ even, we have
\begin{\equ}\label{product spinor representation}
(X\op Y)\cdot_{\mbox{\tiny $\ig$}} (\psi\ot\va)=(X\cdot_{\mbox{\tiny $\ig^{(1)}$}}\psi)\ot\va+(\om_\C^{M_1}\cdot_{\mbox{\tiny $\ig^{(1)}$}}\psi)\ot(Y\cdot_{\mbox{\tiny $\ig^{(2)}$}}\va)
\end{\equ}
where in case $m_1$ and $m_2$ odd we set $X\cdot_{\mbox{\tiny $\ig^{(1)}$}}\psi=(X\cdot_{\mbox{\tiny $\ig^{(1)}$}}\psi_1)\op(-X\cdot_{\mbox{\tiny $\ig^{(1)}$}}\psi_2)$ and $\om_\C^{M_1}\cdot_{\mbox{\tiny $\ig^{(1)}$}}\psi=i(\psi_2\op-\psi_1)$.

Let $\nabla^{\mbs(M_1)}$ and $\nabla^{\mbs(M_2)}$ be the
Levi-Civita connections on $\mbs(M_1)$ and $\mbs(M_2)$. By
\[
\nabla^{\mbs(M_1)\ot\mbs(M_2)}=\nabla^{\mbs(M_1)}\ot\id_{\mbs(M_2)}
+\id_{\mbs(M_1)}\ot \nabla^{\mbs(M_2)}
\]
we mean the tensor product connection on $\mbs(M_1)\ot\mbs(M_2)$.
If we take $\{X_1,\dots, X_{m_1}\}$ a locally positively oriented orthonormal
frame of $(M_1,\ig^{(1)})$, then the Dirac operator on $M_1$ is (locally) defined by
$D_{\ig^{(1)}}^{M_1}=\sum_{j=1}^{m_1}X_j\cdot_{\mbox{\tiny $\ig^{(1)}$}}\nabla^{\mbs(M_1)}_{X_j}$. Similarly,
if we take $\{Y_1,\dots,Y_{m_2}\}$ a locally positively oriented
orthonormal frame of $(M_2,\ig^{(2)})$, we have
$D_{\ig^{(2)}}^{M_2}=\sum_{j=1}^{m_2}Y_j\cdot_{\mbox{\tiny $\ig^{(2)}$}}\nabla^{\mbs(M_2)}_{Y_j}$.
Evidently, in the product setting,
$\{X_1\op0,\dots, X_{m_1}\op0,0\op Y_1,\dots,0\op Y_{m_2}\}$
 is a local section of the frame bundle of $N$.
Hence formula \eqref{product spinor representation} yields
\[
\aligned
D_{\ig}^N&:=\sum_{j=1}^{m_1}(X_j\op0)\cdot_{\mbox{\tiny $\ig^{(1)}$}}
\nabla^{\mbs(M_1)\ot\mbs(M_2)}_{X_j\op0}+\sum_{j=1}^{m_2}
(0\op Y_j)\cdot_{\mbox{\tiny $\ig^{(2)}$}}
\nabla^{\mbs(M_1)\ot\mbs(M_2)}_{0\op Y_j}  \\
&= \tilde D_{\ig^{(1)}}^{M_1} \ot \id_{\mbs(M_2)}+ (\om_\C^{M_1}\cdot_{\mbox{\tiny $\ig^{(1)}$}}\id_{\mbs(M_1)})\ot D_{\ig^{(2)}}^{M_2}
\endaligned
\]
which defines the Dirac operator on $N=M_1\times M_2$, where
$\tilde D_{\ig^{(1)}}^{M_1}= D_{\ig^{(1)}}^{M_1}\op -D_{\ig^{(1)}}^{M_1}$ if both $m_1$ and $m_2$ are odd
and $\tilde D_{\ig^{(1)}}^{M_1}=D_{\ig^{(1)}}^{M_1}$ if $m_1$ is even.

For the case $m_1+m_2$ even, we have the decomposition
$\mbs(N)=\mbs(N)^+\op\mbs(N)^-$ and, moreover, when restrict $D_{\ig}^N$
on those half-spinor spaces we get
$D_{\ig}^N:\Ga(\mbs(N)^\pm)\to\Ga(\mbs(N)^\mp)$.

\subsection{Analysis on a product conformal structure}\label{analysis case1}

In  this section, we will consider our problem in details and we start with
the case $N=M_1\times M_2$, $m_1=\dim M_1\geq2$ and $m_2=\dim M_2\geq1$.
The case $m_1=m_2=1$, which corresponds to $N=S^1\times S^1$,
will be discussed in Section \ref{S1-bundles}.
From now on, in order to give unified expressions in odd and even cases,
we will write it simply $\mbs(N)=\tilde\mbs(M_1)\ot\mbs(M_2)$ with
\[
\tilde\mbs(M_1)=\left\{
\aligned
&\mbs(M_1)\op\mbs(M_1) &\quad &  m_1 \text{ is odd}, \\
&\qquad \mbs(M_1) &\quad & m_1 \text{ is even}.
\endaligned \right.
\]
and denote $\psi\ot\va$ for a spinor field in $\mbs(N)$
when no confusion can arise.

To have a general view upon the problem,
let us fix a function $\theta: M_1\to (0,+\infty)$ and consider the product conformal metric
$\ig_\ell:=\ell^2\theta^2\ig^{(1)}\op\ig^{(2)}$, where $\ell>0$ is a parameter.
According to the discussions in the previous section, we know for the Dirac
operators that
\[
D_{\ig_\ell}^N= \tilde D_{\ell^2\theta^2\ig^{(1)}}^{M_1} \ot \id_{\mbs(M_2)}+
(\ov{\om_\C^{M_1}}\cdot_{\mbox{\tiny $\ell^2\theta^2\ig^{(1)}$}}\id_{\tilde \mbs(M_1)})\ot D_{\ig^{(2)}}^{M_2}
\]
where $\ov{\om_\C^{M_1}}$ denotes the chirality operator and "$\cdot_{\mbox{\tiny $\ell^2\theta^2\ig^{(1)}$}}$" denotes the Clifford multiplication on $M_1$ with respect to the conformal metric $\ell^2\theta^2\ig^{(1)}$ respectively.

Turning to the nonlinear problems, let us denote $|\cdot|_{\ell^2\theta^2\ig^{(1)}}$ and $|\cdot|_{\ig^{(2)}}$ the natural hermitian metrics on $\mbs(M_1)$ and $\mbs(M_2)$ respectively and $|\cdot|_{\ig_\ell}$ the induced metric on $\mbs(N)$. Set $n=m_1+m_2$ and $n^*=\frac{2n}{n-1}$, we can expand the spinorial Yamabe equation
\[
D_{\ig_\ell}^N\phi=|\phi|_{\ig_\ell}^{n^*-2}\phi, \quad \phi=\bar\psi\ot\va\in\mbs(N)
\]
into
\begin{\equ}\label{equ0}
(\tilde D_{\ell^2\theta^2\ig^{(1)}}^{M_1}\bar\psi )\ot \va+
(\ov{\om_\C^{M_1}}\cdot_{\mbox{\tiny $\ell^2\theta^2\ig^{(1)}$}}\bar\psi)\ot (D_{\ig^{(2)}}^{M_2}\va)=
\big( |\bar\psi|_{\ell^2\theta^2\ig^{(1)}}|\va|_{\ig^{(2)}} \big)^{n^*-2}\bar\psi\ot\va.
\end{\equ}

We will now show how to dispense with the assumption on $(M_2,\ig^{(2)},\sa_{M_2})$. In fact, if $M_2$
possesses a nontrivial eigenspinor $\va_{M_2}$ of constant length for some $\lm\neq0$, then
by substituting $\bar\psi\ot\va_{M_2}$ into \eqref{equ0} we get an equivalent problem
\begin{\equ}\label{equ1}
\tilde D_{\ell^2\theta^2\ig^{(1)}}^{M_1}\bar\psi + \lm\ov{\om_\C^{M_1}}\cdot_{\mbox{\tiny $\ell^2\theta^2\ig^{(1)}$}}\bar\psi=
\big(|\bar\psi|_{\ell^2\theta^2\ig^{(1)}}\big)^{n^*-2}\bar\psi
\end{\equ}
which is sitting on $M_1$.
Here, we adopt the convention that $\lm>0$ since (up to a change
of orientation on $M_1$) the proof for $\lm<0$ is exactly the same.

The following transformation formula describes how Dirac operators for conformally
equivalent metrics are related (see \cite{Hij86, Hit74}).
\begin{Prop}\label{conformal formula}
Let $\ig_0$ and $\ig=f^2\ig_0$ be two conformal metrics on
a Riemannian spin $m$-manifold $M$. Then, there exists an
isomorphism of vector bundles $F:\, \mbs(M,\ig_0)\to
\mbs(M,\ig)$ which is a fiberwise isometry such that
\[
D_\ig^M\big( F(\psi) \big)=F\big( f^{-\frac{m+1}2}D_{\ig_0}^M
\big( f^{\frac{m-1}2}\psi \big)\big).
\]
\end{Prop}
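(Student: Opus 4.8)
The statement to prove is Proposition \ref{conformal formula}, the standard conformal covariance of the Dirac operator. Here is my plan.

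\textbf{The plan.} The proof is a local computation comparing spin connections of two conformally related metrics, transported to the spinor bundle. First I would set up the identification of spinor bundles: since $\ig = f^2\ig_0$, the oriented orthonormal frame bundles $P_{SO}(M,\ig_0)$ and $P_{SO}(M,\ig)$ are canonically identified via the map sending a $\ig_0$-orthonormal frame $\{e_j\}$ to the $\ig$-orthonormal frame $\{\bar e_j = f^{-1} e_j\}$; this lifts to an isomorphism of the $Spin$-principal bundles (both spin structures come from the \emph{same} topological data, only the metric rescales), and hence induces the fiberwise isometry $F:\mbs(M,\ig_0)\to\mbs(M,\ig)$ on the associated bundles $P_{Spin}\times_\rho\mbs_m$. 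Under this identification, Clifford multiplication transforms as $\bar e_j \cdot_\ig F(\psi) = F(e_j\cdot_{\ig_0}\psi)$.

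\textbf{Key steps.} The heart is the transformation of the Levi-Civita connection. Using the Koszul formula, one computes that the Levi-Civita connections $\nabla^{\ig}$ and $\nabla^{\ig_0}$ on $TM$ differ by an explicit first-order term involving $\grad f$; lifting this to $\mathfrak{spin}(m)\cong\mathfrak{so}(m)$ via the standard isomorphism (which sends $E_{ij}\mapsto \frac12 e_i\cdot e_j$) gives, for $X\in TM$,
\[
\nabla^{\mbs(M,\ig)}_X F(\psi) = F\Big(\nabla^{\mbs(M,\ig_0)}_X\psi - \tfrac{1}{2f}\,X\cdot_{\ig_0}\grad_{\ig_0}f\cdot_{\ig_0}\psi - \tfrac{1}{2f}\,(Xf)\,\psi\Big),
\]
or the analogous standard formula. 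Then I would contract with the $\ig$-orthonormal frame $\{\bar e_j = f^{-1}e_j\}$: writing $D_\ig^M F(\psi) = \sum_j \bar e_j\cdot_\ig \nabla^{\mbs(M,\ig)}_{\bar e_j}F(\psi)$ and substituting, the terms reorganize. The term $\sum_j e_j\cdot_{\ig_0}\nabla^{\mbs(M,\ig_0)}_{e_j}\psi$ produces $f^{-1}D_{\ig_0}^M\psi$; the curvature-type term $\sum_j \bar e_j\cdot_\ig \bar e_j\cdot_\ig(\cdots)$ collapses because $\sum_j e_j\cdot e_j = -m$ and $\sum_{j}e_j\cdot X\cdot e_j = (m-2)X$ for the cross terms, yielding precisely a multiple of $(\grad f)\cdot\psi$; and the $(e_j f)$ scalar term contributes the remaining piece. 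Collecting the powers of $f$ and the combinatorial constants gives exactly $D_\ig^M(F(\psi)) = f^{-(m+1)/2} F\big(D_{\ig_0}^M(f^{(m-1)/2}\psi)\big)$, equivalently the stated identity after pulling the scalar factor inside.

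\textbf{Main obstacle.} The only real subtlety is bookkeeping: correctly lifting the connection $1$-form difference from $\mathfrak{so}(m)$ to the spinor representation and then tracking all the Clifford-algebra contraction identities ($\sum e_j\cdot e_j = -m$, $\sum e_j\cdot Y\cdot e_j = (m-2)Y$) so that the constants $\frac{m-1}{2}$ and $\frac{m+1}{2}$ emerge rather than some other exponents. It is worth doing this once carefully; an alternative, cleaner route that avoids some of this is to verify the identity pointwise at a single point $p$ using $\ig_0$-normal coordinates centered at $p$ with $f(p)=1$, $\nabla f|_p$ arbitrary, which kills the quadratic-in-$f$ curvature contributions and isolates exactly the two relevant terms; since both sides are first-order differential operators this determines the formula completely. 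I would present the normal-coordinates version to keep the computation short, and simply cite \cite{Hij86, Hit74} for the original derivations. No genuinely hard analysis is involved — this is a classical identity included here for completeness and to fix the precise normalization used in \eqref{equ1}.
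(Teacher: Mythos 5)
Your sketch is correct, and the constants check out: contracting the standard conformal change of the spin connection with the frame $\bar e_j=f^{-1}e_j$ yields $D_\ig F(\psi)=F\big(f^{-1}D_{\ig_0}\psi+\tfrac{m-1}{2}f^{-2}\grad f\cdot\psi\big)$, which is exactly $F\big(f^{-\frac{m+1}2}D_{\ig_0}(f^{\frac{m-1}2}\psi)\big)$. The paper itself gives no proof of this proposition — it is stated as a classical fact with a citation to \cite{Hij86, Hit74} — so there is no divergence of method; your plan is the standard derivation from those references. The only point worth a word of care is the proposed shortcut of checking the identity at a point with $f(p)=1$: since the identity is not linear in $f$, you should note that one may first reduce to $f(p)=1$ by a constant conformal rescaling (for which the formula is immediate) and use compatibility under composition of conformal changes; the direct computation you already wrote out avoids this issue entirely.
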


As a direct consequence, the equation \eqref{equ1} can be conformally transformed into
\[
\tilde D_{\ig^{(1)}}^{M_1}\psi+\lm\ell\theta\om_\C^{M_1}\cdot_{\mbox{\tiny $\ig^{(1)}$}}\psi=(\ell\theta)^{m_1-\frac{m_1-1}2n^*}
|\psi|_{\ig^{(1)}}^{n^*-2}\psi.
\]
Moreover, we can consider the rescaling $\psi\mapsto \ell^{-\frac{m_1-1}2}\psi$ in $\tilde\mbs(M_1)$
and denote $\vr=\ell^{-1}$ so that the above equation is equivalent to
\begin{\equ}\label{equ2}
\vr\tilde D_{\ig^{(1)}}^{M_1}\psi+ \lm\theta\om_\C^{M_1}\cdot_{\mbox{\tiny $\ig^{(1)}$}}\psi=\theta^{m_1-\frac{m_1-1}2n^*}
|\psi|_{\ig^{(1)}}^{n^*-2}\psi \quad \text{on } M_1.
\end{\equ}

\subsection{Functional framework}

Our goal is to find solutions of \eqref{equ2} for varying $\vr>0$. Notice that Eq. \eqref{equ2} is well-defined on $M_1$, and $n^*=\frac{2n}{n-1}<\frac{2m_1}{m_1-1}=m_1^*$. It is not necessary to carry the super- and sub-scripts in $(M_1,\ig^{(1)})$, $\tilde D_{\ig^{(1)}}^{M_1}$ and $\om_\C^{M_1}$ during the proofs, hence in order to simplify the notation, we drop these super- and sub-scripts and to consider the model problem
\begin{\equ}\label{equ3}
\vr\tilde D_\ig\psi+ a \om_\C\cdot\psi=b
|\psi|_\ig^{p-2}\psi
\end{\equ}
on a spin $m$-manifold $(M,\ig,\sa)$, where $a,b:M\to(0,+\infty)$ are functions at least $C^1$ smooth and $2<p<m^*:=\frac{2m}{m-1}$.
Unless otherwise stated, we will also drop the subscript of $|\cdot|_{\ig}$
on $\tilde\mbs(M)$ for notation convenience.

For $q>1$, let us denote $L^q:=L^q(M,\tilde\mbs(M))$ which is defined as
the completion of the space $\Ga_c(\tilde\mbs(M)):=\big\{ \psi\in\Ga(\tilde\mbs(M)):\,
\supp(\psi) \text{ is compact} \big\}$ with respect to the norm
$|\cdot|_q^q:=\int_M|\cdot|^q d\vol_\ig$. Particularly, for $q=2$, we have
$L^2$ is a Hilbert space with inner product
$(\cdot,\cdot)_2=\real\int_M(\cdot,\cdot)d\vol_\ig$.

Let us set $A:=\vr\tilde D_\ig + a \om_\C$, our first point is to study the spectrum
$Spec(A)$ of $A$ in $L^2$ . In fact, there is no difficulty to see that $A$ is
self-adjoint and hence $Spec(A)\subset\R$.

\begin{Lem}
For closed Riemannian spin manifold $(M,\ig)$,
\begin{itemize}
\item[$(1)$] $Spec(A)$ is a closed subset of $\R\setminus\{0\}$ consisting of
an unbounded discrete sequence of eigenvalues;

\item[$(2)$] each eigenspace of $A$ is finite-dimensional and consists of
smooth sections;

\item[$(3)$] the eigenspaces of $A$ form a complete orthonormal decomposition
of $L^2$, that is,
\[
L^2=\ov{\bigoplus_{\lm\in Spec(A)} \ker(A-\lm)};
\]

\item[$(4)$] the set $Spec(A)$ is symmetric about the origin.
\end{itemize}
\end{Lem}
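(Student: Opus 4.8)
The plan is to reduce everything to a known spectral-theoretic statement about the classical Dirac operator. The operator $A = \vr\tilde D_\ig + a\om_\C$ is a zeroth-order perturbation of the (twisted, in the odd case doubled) Dirac operator $\tilde D_\ig$, which is a first-order elliptic self-adjoint operator on the closed manifold $(M,\ig)$. First I would invoke the standard theory for $\tilde D_\ig$ on a closed spin manifold (see \cite{Friedrich,Lawson}): it has a discrete spectrum of real eigenvalues of finite multiplicity, with smooth eigensections, accumulating only at $\pm\infty$, and its eigenspaces furnish a complete orthonormal basis of $L^2$. Since $a\om_\C$ is a bounded, symmetric, zeroth-order operator (here one uses $a\in C^1(M)$ and that $\om_\C$ is a fibrewise isometry with $\om_\C^2 = \pm\id$, in particular $\om_\C$ is self-adjoint up to sign conventions — one checks the sign so that $A$ is genuinely self-adjoint as claimed), the perturbed operator $A$ remains elliptic of first order with the same principal symbol (up to the factor $\vr$). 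Hence $A$ is an elliptic self-adjoint operator with compact resolvent, which gives parts (1)--(3) at once: discrete spectrum, finite-dimensional eigenspaces, elliptic regularity forcing smoothness of eigensections, and the spectral theorem for operators with compact resolvent giving the $L^2$-decomposition $L^2 = \ov{\bigoplus_{\lm\in Spec(A)}\ker(A-\lm)}$. Unboundedness of $Spec(A)$ follows because $\dim L^2 = \infty$ and each eigenspace is finite-dimensional. That $0\notin Spec(A)$ needs a separate small argument (see below).

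For part (4), the symmetry of the spectrum, the key is to produce an anti-unitary (or unitary, depending on setup) involution $\Theta$ on $L^2$ that anticommutes with $A$: if $\Theta A = -A\Theta$ then $\Theta$ maps $\ker(A-\lm)$ isomorphically onto $\ker(A+\lm)$, so $\lm\in Spec(A)\iff -\lm\in Spec(A)$, with equal multiplicities. The natural candidate is Clifford multiplication by the chirality element $\om_\C$ itself, or, when $m$ is odd and $\tilde D_\ig = D_\ig\op(-D_\ig)$, the swap $\psi_1\op\psi_2\mapsto \psi_2\op\psi_1$: this visibly flips the sign of $\tilde D_\ig$ while one checks it interacts correctly with $\om_\C\cdot(\,\cdot\,) = i(\psi_2\op-\psi_1)$. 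In the even-dimensional case one instead uses that $\om_\C$ anticommutes with $D_\ig$ (since $\om_\C = i^k e_1\cdots e_m$ and $m$ even) while commuting with the multiplication operator $a\om_\C$ — wait, that commutes rather than anticommutes, so the correct operator here is multiplication by a single Clifford vector, or again the structural involution inherited from the factorization; I would choose whichever of these genuinely anticommutes with the full $A$ in the given sign conventions. The honest statement is: one must exhibit, case by case ($m_1$ even vs. $m_1$ odd, and recalling the doubled bundle $\tilde\mbs(M)$), an explicit algebraic involution of the fibres, parallel for the Levi-Civita connection, anticommuting with both $\tilde D_\ig$ and $\om_\C\cdot$, hence with $A$.

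The main obstacle is exactly this last point: verifying the anticommutation relation against the explicit Clifford-multiplication formulas \eqref{clifford multiplication}--\eqref{product spinor representation}, especially tracking the signs and the $i$-factors in the odd-dimensional doubled construction where $\om_\C^{M_1}\cdot\psi = i(\psi_2\op-\psi_1)$. A secondary point requiring care is $0\notin Spec(A)$: this is \emph{not} automatic (the classical Dirac operator can have kernel), and is where the perturbation $a\om_\C$ actually matters. I would argue that if $A\psi = 0$ then $\vr\tilde D_\ig\psi = -a\om_\C\psi$; pairing with $\om_\C\psi$ and using that $\tilde D_\ig$ anticommutes appropriately with $\om_\C$ (so that $\real(\tilde D_\ig\psi, \om_\C\psi)_2 = 0$ by an integration-by-parts/self-adjointness computation) forces $\int_M a|\psi|^2\,d\vol_\ig = 0$, and since $a>0$ this gives $\psi\equiv 0$. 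The same computation, incidentally, is what pins down the sign conventions making $A$ self-adjoint in the first place, so I would carry out the $\om_\C$ anticommutation bookkeeping once, carefully, and reuse it for both the self-adjointness, the $0\notin Spec(A)$ claim, and the spectral symmetry in part (4).
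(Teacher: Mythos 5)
Your treatment of (1)--(3) is sound and close in spirit to the paper's: the paper also reduces to the classical Dirac spectral theorem, though it does so by writing $a=\hat a+a_{min}$, observing that $A_{min}:=\vr\tilde D_\ig+a_{min}\om_\C$ satisfies $A_{min}^2=\vr^2\tilde D_\ig^2+a_{min}^2$ (by the anticommutation of $\om_\C$ with $\tilde D_\ig$), and then treating $\hat a\om_\C\cdot$ as a relatively compact perturbation; this yields compact resolvent just as your ellipticity argument does. For the exclusion of $0$, your pairing of $\vr\tilde D_\ig\psi=-a\om_\C\cdot\psi$ against $\om_\C\cdot\psi$ is correct ($\real(\tilde D_\ig\psi,\om_\C\cdot\psi)_2=0$ since $\om_\C$ is parallel, self-adjoint and anticommutes with $\tilde D_\ig$, so $a>0$ forces $\psi=0$), and it is a legitimate alternative to the paper's quadratic-form estimate $(A^2\psi,\psi)_2\geq a_{min}^2|\psi|_2^2$; note however that the paper's version gives the stronger quantitative gap $Spec(A)\subset(-\infty,-a_{min}]\cup[a_{min},+\infty)$, which is worth keeping. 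In the odd case of (4) your proposed swap $S(\psi_1\op\psi_2)=\psi_2\op\psi_1$ does anticommute with both $\tilde D_\ig=D_\ig\op-D_\ig$ and with $\om_\C\cdot\psi=i(\psi_2\op-\psi_1)$, hence with $A$; this is literally the paper's map, since its spinor $\psi^s-\psi^d$ equals $\psi_2\op\psi_1$.

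The genuine gap is the even-dimensional case of (4). You correctly discard $\om_\C$ (it commutes with the potential term $a\om_\C$), but your remaining candidate, Clifford multiplication by a single vector field, also fails on a general closed spin manifold: it anticommutes with $D_\ig$ only if the vector field is parallel, which need not exist. So as written no verified anticommuting involution is produced when $m$ is even, and the claim $Spec(A)=-Spec(A)$ is not established there. The paper's device is different: it uses the chirality splitting $\mbs(M)=\mbs^+(M)\op\mbs^-(M)$, identifies both summands with a common bundle $\tilde\mbs_M$ via $(\psi,0)\mapsto\frac1{\sqrt2}(\psi,-i\psi)$ and $(0,\psi)\mapsto\frac1{\sqrt2}(\psi,i\psi)$, and checks that in this basis $\tilde D_\ig$ becomes diagonal with entries $\pm D_\ig$ while $\om_\C$ becomes the off-diagonal matrix $i\bigl(\begin{smallmatrix}0&1\\-1&0\end{smallmatrix}\bigr)$ --- i.e.\ exactly the odd-case normal form --- after which the same swap applies. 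You need either this reduction or some equivalent explicit fibrewise involution to close part (4) in the even case.
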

\begin{proof}
Let us set $a_{min}=\displaystyle\min_M a>0$ and write $a=\hat a + a_{min}$.
Then, we can introduce the operator $\hat A:=\vr\tilde D_\ig + \hat a\om_\C$
and deduce
\[
(A^2\psi,\psi)_2=(\hat A\psi,\hat A\psi)_2+ a_{min}^2|\psi|_2^2
+2 a_{min}(\hat a \psi,\psi)_2\geq a_{min}^2|\psi|_2^2.
\]
This suggests $Spec(A)\subset (-\infty, -a_{min}]\cup[a_{min},+\infty)$.

Notice that $\om_\C\cdot$ and $\tilde D_\ig$ anticommute, thus
when denote by $A_{min}:=\vr\tilde D_\ig+a_{min}\om_\C$
we get
\[
A_{min}^2=\vr^2\tilde D_\ig^2+ a_{min}^2.
\]
And this implies together with the spectral theorem of Dirac operators
that the $L^2$-spectrum of $A_{min}$
is given by an unbounded discrete sequence of eigenvalues of finite multiplicity.
Since the linear map $\psi\mapsto \hat a\om_\C\cdot\psi$ is relatively compact
with respect to $A_{min}$, we deduce that $A$ has compact resolvent. This proves
$(1)$.  And immediately, the statements $(2)$ and $(3)$ follow from the classical
spectral theory of elliptic self-adjoint operators.

As for the symmetry of $Spec(A)$ about $0$,
it straightforwardly follows from the splitting of the spinor bundle in the
following sense.
In case $m$ is odd, let $\psi=\psi_1\op\psi_2$ be an eigenspinor to an
eigenvalue $\lm\in Spec(A)$, we have $\psi=\psi^s+\psi^d$ with
\[
\psi^s=\frac{\psi_1+\psi_2}2\op\frac{\psi_1+\psi_2}2 \quad
\text{and} \quad
 \psi^d=\frac{\psi_1-\psi_2}2\op-\frac{\psi_1-\psi_2}2.
\]
Since $A=\vr\tilde D_\ig+a\om_\C$
exchanges the above splitting of $\mbs(M)\op\mbs(M)$, we have
\[
\vr\tilde D_\ig\psi^s+a\om_\C\cdot\psi^s=\lm \psi^d \quad
\text{and} \quad
\vr\tilde D_\ig\psi^d+a\om_\C\cdot\psi^d=\lm \psi^s
\]
and therefore
\[
A(\psi^s-\psi^d)=-\lm(\psi^s-\psi^d).
\]
In case $m$ even, $\mbs(M)$ itself splits into $\mbs^+(M)\op\mbs^-(M)$
of eigenspaces of $\om_\C$ and
there is a representation of the Dirac operator as
\[
\tilde D_\ig=D_\ig=\begin{pmatrix}
0 & D_\ig\big|_{\mbs^-(M)} \\
D_\ig\big|_{\mbs^+(M)} & 0
\end{pmatrix}: \Ga(\mbs^+(M))\op\Ga(\mbs^-(M)) \to
\Ga(\mbs^-(M))\op\Ga(\mbs^+(M)).
\]
One can pass from $\mbs^+(M)$ to $\mbs^-(M)$ by taking the same
underlying vector bundle $\tilde\mbs_M=\mbs^+(M)=\mbs^-(M)$
and set
\[
\tilde\mbs^1_M:=\big\{ (\psi,-i\psi):\, \psi\in\tilde\mbs_M \big\} \quad
\text{and} \quad
\tilde\mbs^2_M:=\big\{ (\psi,i\psi):\, \psi\in\tilde\mbs_M \big\}.
\]
Then there is an isomorphism between vector bundles
\[
\tilde\mbs_M\op\tilde\mbs_M\to \tilde\mbs^1_M\op\tilde\mbs^2_M, \quad
(\psi,0)   \mapsto   \frac1{\sqrt2}
(\psi, -i\psi)  \quad \text{and} \quad
(0,\psi)   \mapsto   \frac1{\sqrt2}
(\psi, i\psi)
\]
so that the operator $A$ has the representation
\[
\tilde D_\ig \mapsto \begin{pmatrix}
D_\ig\big|_{\tilde\mbs} & 0  \\
0 & -D_\ig\big|_{\tilde\mbs}
\end{pmatrix} \quad \text{and} \quad
\om_\C \mapsto i \begin{pmatrix}
0 & \id_{\tilde\mbs}\\
-\id_{\tilde\mbs} & 0
\end{pmatrix}
\]
which is exactly the same as the case $m$ odd.
Therefore, one easily checks $Spec(A)$ is symmetric about $0$.
\end{proof}

From now on, we make the assumption that $(M,\ig)$ is closed.
Then we can choose a complete orthonormal basis $\psi_{\pm1},\psi_{\pm2},\dots$
of $L^2$ consisting of the eigenspinors of $A$, i.e.
$A\psi_{\pm k}=\lm_{\pm k}\psi_{\pm k}$ and the spectrum
$Spec(A)$ will be denoted as
\[
\cdots\leq \lm_{-2}\leq \lm_{-1}<0< \lm_1\leq\lm_2\leq\cdots,
\]
where each eigenvalue appears with its multiplicity.
Particularly, we have $\lm_k=-\lm_{-k}$ and $|\lm_{\pm k}|\to+\infty$ as $k\to\infty$.

From a variational point of view, to study Eq. \eqref{equ3}, we need to
define the unbounded operator $|A|^s:L^2\to L^2$, $s\geq0$, by
\[
|A|^s \psi = \sum_{k=-\infty}^{\infty}|\lm_k|^s \al_k\psi_k
\]
where $\psi=\sum_{k=-\infty}^\infty\al_k\psi_k\in L^2$. In this way, we can
introduce the domain of $|A|^s$ in $L^2$ as
\[
\msh^s:=\bigg\{ \psi=\sum_{k=1}^\infty\al_k\psi_k\in L^2:\,
\sum_{k=-\infty}^\infty |\lm_k|^{2s} |\al_k|^2<\infty \bigg\}.
\]
It is worth pointing out that $\msh^{\frac12}$ coincides with the Sobolev space of order
$\frac12$, that is  $W^{\frac12,2}(M,\tilde\mbs(M))$
(see for instance \cite{Adams, Ammann}).
Moreover, we can equip $\ch:=\msh^{\frac12}$ an inner product
\begin{\equ}\label{the norm}
\inp{\psi}{\va}_\vr:=\frac1{\vr^m}\,\real\int_M\big(|A|^{1/2}\psi,
|A|^{1/2}\va\big) d\vol_\ig
\end{\equ}
and the induced norm $\|\cdot\|_\vr$
such that $(\ch,\inp{\cdot}{\cdot}_\vr)$ becomes a Hilbert space.
Remark that, in the above notations, we have emphasized the dependence
on the parameter $\vr$ because it is already hidden in the operator $A$ and its spectrum.
The dual space of $\ch$ will be denoted by $\ch^*=W^{-\frac12,2}(M,\tilde\mbs(M))$. Identifying $\ch$ with $\ch^*$
we have $\inp{\cdot}{\cdot}_\vr$ can be used to denote the norm on $\ch^*$.

On the Banach space $L^q$, $q>1$, we equip it a new norm
\[
|\psi|_{q,\vr}=\bigg( \frac1{\vr^m}\int_M|\psi|^q d\vol_\ig \bigg)^{\frac1q}.
\]
Then, recall $m^*=\frac{2m}{m-1}$, we get

\begin{Lem}\label{embeding lemma}
If $\vr>0$ is small, then for any $q\in[2,m^*]$ the embedding
$\id_\ch:(\ch,\|\cdot\|_\vr)\hookrightarrow (L^q, |\cdot|_{q,\vr})$
is a continuous map independent of $\vr$, that is, there exists $c_q>0$
does not depend on $\vr$ such that
\[
|\psi|_{q,\vr}\leq c_q \|\psi\|_\vr \quad \text{for all } \psi\in\ch.
\]
In particular, the embedding is compact for $q\in[2,m^*)$.
\end{Lem}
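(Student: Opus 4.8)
The plan is to reduce the $\vr$-weighted estimate to the standard (unweighted) Sobolev embedding $W^{\frac12,2}(M,\tilde\mbs(M))\hookrightarrow L^q(M,\tilde\mbs(M))$ for $q\in[2,m^*]$ on the fixed closed manifold $(M,\ig)$, which is a classical fact (see \cite{Adams, Ammann}) that does not involve $\vr$ at all. The only $\vr$-dependence in the two norms appearing in the statement comes through (i) the explicit factor $\vr^{-m}$ inserted in front of the volume integrals in both $\|\cdot\|_\vr$ and $|\cdot|_{q,\vr}$, and (ii) the $\vr$-dependence hidden in the operator $A=\vr\tilde D_\ig+a\om_\C$ and hence in $|A|^{1/2}$. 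First I would dispose of (i): since the factor $\vr^{-m}$ enters $|\psi|_{q,\vr}^q$ as $\vr^{-m}$ but $\|\psi\|_\vr^2$ also as $\vr^{-m}$, writing the target inequality as $|\psi|_{q,\vr}\le c_q\|\psi\|_\vr$ and raising to suitable powers shows the prefactor contributes $\vr^{-m/q}$ on the left and $\vr^{-m/2}$ on the right; since $q\ge 2$ we have $\vr^{-m/q}\le \vr^{-m/2}$ for $\vr\le 1$, so this scaling is harmless and in fact works in our favour. It therefore suffices to bound $|\psi|_q$ by a constant times $\bigl(\real\int_M(|A|^{1/2}\psi,|A|^{1/2}\psi)\,d\vol_\ig\bigr)^{1/2}=\bigl(\sum_k|\lm_k|\,|\al_k|^2\bigr)^{1/2}$, with a constant independent of $\vr\in(0,1]$.

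The key step is then to compare $\sum_k|\lm_k|\,|\al_k|^2$ with the usual $W^{\frac12,2}$-norm built from the Dirac operator, namely $\sum_j(1+|\mu_j|)\,|\beta_j|^2$ where $\{\mu_j\}$ is the spectrum of $\tilde D_\ig$. From the spectral analysis in the preceding Lemma we know $Spec(A)\subset(-\infty,-a_{min}]\cup[a_{min},+\infty)$ with $a_{min}=\min_M a>0$ \emph{independent of $\vr$}, so $|\lm_k|\ge a_{min}$ uniformly; this controls the "low-frequency" part. For the "high-frequency" part, I would use $A_{min}^2=\vr^2\tilde D_\ig^2+a_{min}^2$ from the previous proof together with the relative compactness of $\psi\mapsto\hat a\om_\C\cdot\psi$ with respect to $A_{min}$: this gives, for $\vr$ small, a two-sided comparison of the quadratic form $(A^2\psi,\psi)_2$ with $\vr^2(\tilde D_\ig^2\psi,\psi)_2+|\psi|_2^2$, hence of $(|A|\psi,\psi)_2$ with $(\sqrt{\vr^2\tilde D_\ig^2+1}\,\psi,\psi)_2$ up to $\vr$-independent multiplicative constants. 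Since $\vr\le1$, one has $\sqrt{\vr^2 t^2+1}\le \sqrt{t^2+1}$ pointwise in $t$, so $\|\psi\|_\vr^2\gtrsim \vr^{-m}\bigl(\vr^2(\tilde D_\ig^2\psi,\psi)_2+|\psi|_2^2\bigr)\gtrsim \vr^{-m}\vr^{2}\,\|\psi\|_{W^{1/2,2}}^2$ — but this loses a factor $\vr^{2}$, which is the wrong direction.

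Because of this, I would instead avoid comparing to $W^{\frac12,2}$ and argue more directly, which I expect to be the main obstacle and the place requiring care. The cleaner route: interpolate. For $q=2$ the embedding is trivial with $c_2=a_{min}^{-1/2}$ (from $|\lm_k|\ge a_{min}\ge\ldots$, actually $\|\psi\|_\vr^2\ge \vr^{-m}a_{min}|\psi|_2^2$, giving $|\psi|_{2,\vr}\le a_{min}^{-1/2}\|\psi\|_\vr$ after the scaling check above, using $\vr\le1$). For $q=m^*$ we need the genuine critical Sobolev bound; here I would observe that $|A|^{1/2}$ and $(1+\vr^2|\tilde D_\ig|)^{1/2}$ are comparable \emph{with $\vr$-independent constants} only after multiplying the Dirac part by $\vr$, but the operator $\vr\tilde D_\ig$ is precisely a semiclassical Dirac operator, and the semiclassical Sobolev inequality $|\psi|_{q}\le C\bigl(\vr^{-m(1/2-1/q)}\bigr)\cdot(\text{weighted }W^{1/2}\text{-norm})$ is exactly what produces the $\vr^{-m}$ weights in the Lemma's norms; this is the content of the rescaling $\psi\mapsto\vr^{-(m_1-1)/2}\psi$ carried out before \eqref{equ2} and of standard Gagliardo–Nirenberg estimates for $\vr\tilde D$. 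Concretely I would: (a) prove the estimate on small geodesic balls by transplanting the Euclidean Gagliardo–Nirenberg–Sobolev inequality and rescaling $x\mapsto x/\vr$, absorbing the curvature terms for $\vr$ small via the formula $A_{min}^2=\vr^2\tilde D_\ig^2+a_{min}^2$ and the Lichnerowicz-type expansion of $\tilde D_\ig^2$; (b) patch with a partition of unity subordinate to a fixed finite cover of $M$, noting that the number of charts is $\vr$-independent; (c) conclude $|\psi|_{m^*,\vr}\le c_{m^*}\|\psi\|_\vr$ with $c_{m^*}$ independent of $\vr$; (d) interpolate between $q=2$ and $q=m^*$ for intermediate $q$; (e) for $q\in[2,m^*)$ upgrade continuity to compactness by the same rescaling-and-patching together with the fact that $\id_\ch$ on the fixed manifold is compact into $L^q$ for subcritical $q$ (Rellich–Kondrachov), the $\vr$-weights being fixed positive constants once $\vr$ is fixed. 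The delicate point throughout is item (a): making sure the curvature/lower-order terms in $\tilde D_\ig^2$ and the error from Clifford multiplication do not spoil the constant as $\vr\to0$, which works precisely because those terms are of lower order and are dominated by $a_{min}^2$ (and by the $\vr^2\tilde D_\ig^2$ term) uniformly for $\vr$ below some threshold $\vr_0$.
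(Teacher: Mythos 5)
Your opening reduction discards the weights and thereby replaces the lemma by a strictly stronger statement that is false. Writing the claim as $|\psi|_q\le c_q\,\big||A|^{1/2}\psi\big|_2$ with $c_q$ independent of $\vr$ cannot hold for any $q>2$: for a family concentrating at scale $\vr$, say $\psi_\vr(x)=\vr^{-m/2}\Psi(x/\vr)$ in a fixed chart, the identity \eqref{norm esti1} gives $\big||A|^{1/2}\psi_\vr\big|_2^2\le\big||A|\psi_\vr\big|_2\,|\psi_\vr|_2\le C\int_M\big(\vr^2|\nabla\psi_\vr|^2+|\psi_\vr|^2\big)d\vol_\ig=O(1)$, while $|\psi_\vr|_q^q=\vr^{m(1-q/2)}|\Psi|_q^q\to\infty$. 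The two sides of the weighted inequality carry \emph{different} powers of $\vr$ ($\vr^{-m/q}$ versus $\vr^{-m/2}$), and that mismatch is exactly what makes the lemma true; it is not a prefactor you may drop "in your favour". Your later plan (a)--(e) does revert to the correct weighted target $|\psi|_{m^*,\vr}\le c_{m^*}\|\psi\|_\vr$, so this is a detour rather than the fatal flaw, but the real gap is in step (a): $m^*=\frac{2m}{m-1}$ is the critical exponent for $W^{\frac12,2}$, and the norm $\|\cdot\|_\vr$ is built from the \emph{nonlocal} operator $|A|^{1/2}$, so a ball-by-ball rescaling followed by a partition of unity requires $\vr$-uniform commutator bounds for $[|A|^{1/2},\chi]$, which you neither state nor prove; no order-one (local) estimate reaches the exponent $m^*$ directly.

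The paper's proof avoids both issues and you should adopt its mechanism. From $A^2=\vr^2\tilde D_\ig^2+a^2+\vr\nabla a\cdot\om_\C$ and the Schr\"odinger--Lichnerowicz formula one gets \eqref{norm esti1}, which shows that for small $\vr$ the quantity $\big||A|\psi\big|_2^2$ is equivalent, with constants independent of $\vr$, to the \emph{local} quantity $\int_M(\vr^2|\nabla\psi|^2+|\psi|^2)d\vol_\ig$. Multiplying the classical global Sobolev inequality on the fixed closed manifold $M$ by $\vr^{-(m-2)}$ then yields the weighted embedding $(\msh^1,\|\cdot\|_{1,2;\vr})\hookrightarrow L^{\frac{2m}{m-2}}$ with a $\vr$-independent constant and no localization at all. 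Finally, the endpoint $(\msh^{\frac12},\|\cdot\|_\vr)\hookrightarrow(L^{m^*},|\cdot|_{m^*,\vr})$ follows from Calder\'on--Lions complex interpolation between this and the trivial bound $(\msh^0,|\cdot|_{2,\vr})\hookrightarrow(L^2,|\cdot|_{2,\vr})$, using $\frac1{m^*}=\frac12\cdot\frac{m-2}{2m}+\frac12\cdot\frac12$ and the fact that $\msh^{\frac12}$ is by construction the half-way interpolation space of the scale $\msh^s$; compactness for $q<m^*$ comes from interpolating against the compact embedding $W^{1,2}\hookrightarrow L^q$. In short: the heart of the proof is the uniform identification of the \emph{order-one} norm with a weighted $W^{1,2}$ norm plus interpolation, not a direct fractional (or semiclassical, localized) Sobolev estimate.
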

\begin{proof}
Our strategy is to use interpolation inequalities and we sketch the proof as
follows. To begin with, we observe
that, for $\psi\in\Ga(\tilde\mbs(M))$,
\[
A^2\psi=\vr^2(\tilde D_\ig)^2\psi + a^2\psi +\vr\nabla a\cdot \om_\C
\cdot \psi.
\]
When denoted by $Scal_\ig$ the scalar curvature of $(M,\ig)$, by
Schr\"odinger-Lichnerowicz formula, we have
\[
(\tilde D_\ig)^2=\nabla^*\nabla+\frac14 Scal_\ig
\]
where $\nabla^*\nabla$ is the standard connection Laplacian.
Hence we get
\begin{eqnarray}\label{norm esti1}
\big||A|\psi\big|_2^2&=&|A\psi|_2^2 = \int_M(A^2\psi,\psi)d\vol_\ig  \nonumber\\
&=&\int_M \vr^2|\nabla\psi|^2
+\Big(a^2+\frac{\vr^2}4Scal_\ig\Big)|\psi|^2
+ \vr(\nabla a\cdot\om_\C\cdot\psi,\psi) d\vol_\ig
\end{eqnarray}
for all $\psi\in\Ga(\tilde\mbs(M))$.

Since $M$ is closed, the curvature function $Scal_\ig$ is bounded. Noting that
$a>0$ is of class $C^1$, we assert from \eqref{norm esti1} that,
for small values of $\vr$, the space $\msh^1$ coincides with the standard
Sobolev space $W^{1,2}(M,\tilde\mbs(M))$ and,
for each fixed $\vr$, the map $\psi\mapsto\big| |A|\psi\big|_2$
defines an equivalent norm on $W^{1,2}(M,\tilde\mbs(M))$.

Recalling the classical Sobolev embedding theorems, we can conclude that there exists
a positive constant $C$ (which depends on the dimension) such that
\[
\Big( \int_M |\psi|^{\frac{2m}{m-2}}d\vol_\ig \Big)^{\frac{m-2}m} \leq
C \int_M\big(|\nabla\psi|^2 +  |\psi|^2 \big)d\vol_\ig
\]
for all $\psi\in W^{1,2}(M,\tilde\mbs(M))$. One easily gets
\[
\Big( \frac1{\vr^m}\int_M |\psi|^{\frac{2m}{m-2}}d\vol_\ig \Big)^{\frac{m-2}m} \leq
\frac{C}{\vr^m} \int_M\big(\vr^2|\nabla\psi|^2  + |\psi|^2 \big)d\vol_\ig
\]
for all $\psi\in \msh^1$ provided that $\vr$ is small.

In the next, we use the following notation for a different norm
of $\psi\in W^{1,2}(M,\tilde\mbs(M))$
\[
\|\psi\|_{1,2;\vr}^2:=\frac1{\vr^m} \int_M\big(\vr^2|\nabla\psi|^2  + |\psi|^2 \big)d\vol_\ig.
\]
Then, consider the interpolation couples
\[
(\msh^1,\|\cdot\|_{1,2;\vr})\hookrightarrow \big(L^{m^*},|\cdot|_{m^*,\vr}\big)
\]
and
\[
(\msh^0,|\cdot|_{2,\vr}) \hookrightarrow (L^2,|\cdot|_{2,\vr}),
\]
we can easily assert from the Calder\'on-Lions interpolation theorem \cite{RS}
that the embedding constant for $(\msh^{\frac12},\|\cdot\|_\vr)\hookrightarrow
(L^{m^*},|\cdot|_{m^*,\vr})$ is independent of $\vr$.

For the compactness, we only need to point out that the embedding
$W^{1,2}(M,\tilde\mbs(M))\hookrightarrow L^q$ is compact for
$q\in[2,m^*)$. Therefore, by the interpolation theorem again,
we have $\ch=\msh^{\frac12}\hookrightarrow L^q$ is compact
for $q\in[2,m^*)$, which completes the proof.
\end{proof}

\begin{Rem}
Lemma \ref{embeding lemma} gives rise to a question on the
$\vr$-dependence of the embedding constant $c_q$ for large values
of $\vr$. This is quite involved. 
Since the form domain of the operator $A$ is the fractional
Sobolev space $W^{\frac12,2}(M,\tilde\mbs(M))$, the embedding
$\ch \hookrightarrow L^q$, $q\in[2,m^*]$ exists in any circumstance.
However, from the definition of the norm $\|\cdot\|_\vr$ in \eqref{the norm},
one can not get the explicit dependence on the parameter $\vr$. Moreover,
by the Schr\"odinger-Lichnerowicz formula and formula \eqref{norm esti1}, we see that,
for large $\vr$, the interaction between the function $a$ and the scalar curvature
$Scal_\ig$ enters into play. In this situation, the embedding constant for
$W^{1,2}(M,\tilde\mbs(M))\hookrightarrow L^{m^*}$ is
$\vr$-dependent when $Scal_\ig$ possesses certain negative parts.
This fact will probably impact the embedding constant of $\ch$ into $L^{m^*}$.
\end{Rem}

\medskip

Recall that we have an $(\cdot,\cdot)_2$-orthogonal decomposition
\[
L^2=L_\vr^+\op L_\vr^-, \quad \psi=\psi^++\psi^-
\]
with
\[
L_\vr^+:= \ov{\bigoplus_{k=1}^{+\infty} \ker(A-\lm_k)} \quad \text{and} \quad
L_\vr^-:= \ov{\bigoplus_{k=-1}^{-\infty} \ker(A-\lm_k)}
\]
so that $A$ is positive definite on $L_\vr^+$ and negative definite on $L_\vr^-$.
Then, this leads to the orthogonal decomposition of $\ch$ with respect to
the inner product $\inp{\cdot}{\cdot}_\vr$ as
\[
\ch=\ch_\vr^+\op \ch_\vr^-, \quad \ch_\vr^\pm=\ch\cap L_\vr^\pm.
\]

With the above notations, we have Eq. \eqref{equ3} is the Euler-Lagrange equation
of the functional
\begin{eqnarray}\label{the functional}
\cl_\vr(\psi)&=&\frac1{\vr^m}\int_M\Big( \frac12(A\psi,\psi)-\frac{b}{p}|\psi|^{p}
\Big) d\vol_\ig  \nonumber\\[0.5em]
&=&\frac12\big(\|\psi^+\|_\vr^2-\|\psi^-\|_\vr^2 \big)-
\frac1{\vr^m\, p}\int_M b|\psi|^{p}d\vol_\ig
\end{eqnarray}
defined on $\ch=\ch_\vr^+\op\ch_\vr^-$. And by Lemma \ref{embeding lemma}, we have
$\cl_\vr\in C^2(\ch,\R)$.

We emphasize that, by abuse of notation, we just simply write
$\psi=\psi^++\psi^-$  for the orthogonal decomposition of $\ch$ without
mention its dependence on $\vr$. However, one should always keep in mind that,
for different values of $\vr$, such decomposition of a spinor $\psi$ is different.

\section{Existence of solutions}

We shall now investigate the existence of a nontrivial solution for
Eq. \eqref{equ3}. This is equivalent to find nontrivial critical points
of the functional $\cl_\vr$ in the Hilbert space $\ch$.
Fortunately, this is not a difficult task. Indeed, by noting that
$p<m^*$,
the compact embedding $\ch\hookrightarrow L^{p}$ sheds light on
several ways to obtain the existence issue.

A first approach is to construct, on a subspace of $\ch$,
a functional having a mountain pass geometry whose critical points
are in one-to-one correspondence with critical points of $\cl_\vr$.
This idea can be found in a paper of Buffoni, Jeanjean and
Stuart in 1993 where the authors studied the solutions to the Choquard-Pekar equation
in $\R^3$, see \cite{BJS}. Compared with the problem \eqref{equ3},
the strategy is to use a global Lyapunov-Schmidt
reduction to control the part of the solutions in the space $\ch_\vr^-$. This will lead
to study a functional defined only on $\ch_\vr^+$. Then a direct application of the
Mountain Pass Theorem gives the existence of a critical point. Such reduction
argument essentially requires the super-quadratic part to be
strictly convex and  $C^2$ smooth.
This is automatically satisfied in our situation since the the super-quadratic part in $\cl_\vr$
more or less behaves like the $L^{p}$-integral.

This approach was extended by several papers to study different nonlinear PDE
problems. Typical (but not comprehensive) results can be found in \cite{Ackermann}
where Schr\"odinger equations with periodic potentials and in \cite{MH, MY}
where semiclassical Hamiltonian elliptic systems were studied.

A second approach devotes in searching directly critical points of indefinite functionals.
Benci and Rabinowitz \cite{BR1979} opened this route. They constructed deformations
having special representations by solving appropriate differential equations approximately
by time discretization. This approach was subsequently improved by Hofer in \cite{Hofer1983}.
Several applications are exhibited in the study of periodic solutions of the one-dimensional wave
equation, in the study of periodic solutions of Hamiltonian systems of ordinary differential equations,
in the existence theory of systems of elliptic equations, in the study of resonance problems of the
Landesman and Lazer type, etc.

Substantial improvements along this approach were made by
Kryszewski and Szulkin \cite{KS}. The authors built an abstract linking theorem to
obtain critical points of a strongly indefinite functional.
And applications of this linking theorem give rise to the existence of a solution
to nonlinear Schr\"odinger equations with periodic potentials and some general nonlinearities.
The convexity of the super-quadratic part is not required and the energy functional is
of $C^1$. This approach was subsequently refined in the work of
Bartsch and Ding \cite{BD2006} via a general setting on Banach spaces.

The third approach (using the fundamental idea in calculus of variations)
is to consider a constrained variational structure. Specifically, to our model problem
\eqref{equ3}, the procedure is to construct, on the unit sphere of $\ch_\vr^+$, a map $\chi_\vr$
in $\ch$  such that the composition $\cl_\vr\circ\chi_\vr$ is of $C^1$ smooth
and all its critical points on the unit sphere
correspond to solutions of \eqref{equ3}. This formulation
was developed by Szulkin and Weth \cite{Szulkin-Weth:2010}, and this idea can
be viewed as a refinement of the first approach in the sense that $\chi_\vr$
can be understand as a normalized reduction map of $\cl_\vr$ to $\ch_\vr^+$.
However, being differently, the advantage of this approach
is to remove the convexity and to give the most simplified characterization
of the critical value.

Here, owning to the original face of the problem \eqref{equ3},
we adopt the first approach mentioned above to obtain the existence results.
The basic reason, which tempts us into choosing this approach,
is that we shall use the $C^2$ property and the reduction procedure to go
further to get some useful estimate on the critical levels so that we could draw
more information on our geometric objects.

Let's begin with the following compactness result of the functional $\cl_\vr$. Since the proof is classical, we simply omit it here.

\begin{Lem}\label{PS-condition}
For each $\vr>0$ small, $\cl_\vr$ satisfies the $(P.S.)_c$-condition
for $c\geq0$, that is,
\[ \left.
\aligned
\cl_\vr(\psi_n) \to c \  \\
\cl_\vr'(\psi_n)\to0 \
\endaligned \right\} \Rightarrow \{\psi_n\} \text{ possesses
a convergent subsequence in } \ch.
\]
Moreover, $\psi_n\to0$ if and only if $c=0$.
\end{Lem}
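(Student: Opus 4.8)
The plan is to prove the $(P.S.)_c$-condition for $\cl_\vr$ at nonnegative levels $c\ge 0$ by the standard two-step argument: first show that a Palais--Smale sequence is bounded in $\ch$, then upgrade boundedness to strong convergence using the compact embedding $\ch\hookrightarrow L^p$ from Lemma~\ref{embeding lemma} together with the hyperbolic splitting $\ch=\ch_\vr^+\oplus\ch_\vr^-$. Throughout, $\vr>0$ is fixed and small so that Lemma~\ref{embeding lemma} applies and $\cl_\vr\in C^2(\ch,\R)$.

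For boundedness, let $\{\psi_n\}$ satisfy $\cl_\vr(\psi_n)\to c$ and $\cl_\vr'(\psi_n)\to 0$ in $\ch^*$. Using the standard combination
\[
\cl_\vr(\psi_n)-\tfrac1p\cl_\vr'(\psi_n)[\psi_n]
=\Big(\tfrac12-\tfrac1p\Big)\big(\|\psi_n^+\|_\vr^2-\|\psi_n^-\|_\vr^2\big),
\]
one sees the left side is $c+o(1)+o(1)\|\psi_n\|_\vr$, so this alone controls only $\|\psi_n^+\|_\vr^2-\|\psi_n^-\|_\vr^2$, not the full norm — the functional is strongly indefinite. To get genuine boundedness I would instead combine this with the two inequalities obtained by testing $\cl_\vr'(\psi_n)$ against $\psi_n^+$ and $-\psi_n^-$:
\[
\cl_\vr'(\psi_n)[\psi_n^+]=\|\psi_n^+\|_\vr^2-\tfrac1{\vr^m}\!\int_M b|\psi_n|^{p-2}(\psi_n,\psi_n^+)\,d\vol_\ig,
\]
and similarly for $\psi_n^-$. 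Writing $t_n:=\Big(\tfrac1{\vr^m}\int_M b|\psi_n|^p\,d\vol_\ig\Big)^{1/p}$ and using $\|\psi_n^\pm\|_\vr\le c_p'\,t_n^{\,p-1}$ (Hölder plus Lemma~\ref{embeding lemma}, since $b$ is bounded) one finds $\|\psi_n\|_\vr\lesssim t_n^{\,p-1}$; feeding this back into the identity for $\cl_\vr(\psi_n)-\tfrac1p\cl_\vr'(\psi_n)[\psi_n]$ gives $t_n^{\,p}\lesssim c+o(1)+o(1)t_n^{\,p-1}$, whence $\{t_n\}$ and then $\{\psi_n\}$ are bounded. (This is the reduction-type estimate alluded to before the lemma; the strict convexity / growth of the $L^p$-term makes it work.)

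Once $\{\psi_n\}$ is bounded, pass to a subsequence with $\psi_n\rightharpoonup\psi$ in $\ch$ and, by the compactness in Lemma~\ref{embeding lemma} (valid since $p<m^*$), $\psi_n\to\psi$ strongly in $L^p$. Then $b|\psi_n|^{p-2}\psi_n\to b|\psi|^{p-2}\psi$ in $L^{p/(p-1)}\subset\ch^*$, so the nonlinear term converges. Now test $\cl_\vr'(\psi_n)-\cl_\vr'(\psi)$ against $\psi_n^+-\psi^+$ and against $-(\psi_n^--\psi^-)$: the quadratic parts give exactly $\|\psi_n^+-\psi^+\|_\vr^2$ and $\|\psi_n^--\psi^-\|_\vr^2$, while the remaining terms involve $\cl_\vr'(\psi_n)\to 0$, $\cl_\vr'(\psi)$ paired with a weakly-null sequence, and the (now convergent) nonlinear term paired with $\psi_n-\psi\to 0$ in $L^p$ — all $o(1)$. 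Hence $\|\psi_n-\psi\|_\vr\to 0$, proving the $(P.S.)_c$-condition. Finally, if $c=0$ the same estimate $t_n^p\lesssim c+o(1)+o(1)t_n^{p-1}$ forces $t_n\to 0$, hence $\psi_n\to 0$ in $L^p$ and then in $\ch$; conversely $\psi_n\to 0$ gives $c=\cl_\vr(0)=0$.

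The main obstacle is the boundedness step: because $\cl_\vr$ is strongly indefinite, the naive Ambrosetti--Rabinowitz-type manipulation does not directly bound $\|\psi_n\|_\vr$, and one must exploit the precise superquadratic (pure power-like) structure of the $L^p$-term together with the projections onto $\ch_\vr^\pm$ to close the estimate. The convergence step is then routine given the compact embedding and the splitting.
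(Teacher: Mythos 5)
Your proof is correct, and it is precisely the classical argument the paper alludes to: the authors omit the proof of Lemma \ref{PS-condition} entirely ("Since the proof is classical, we simply omit it here"), and your two-step scheme — bounding a Palais--Smale sequence by playing the energy identity against the tests with $\psi_n^\pm$, then upgrading to strong convergence via the compact embedding of Lemma \ref{embeding lemma} and the orthogonal splitting $\ch=\ch_\vr^+\op\ch_\vr^-$ — is exactly what is meant. The one bookkeeping slip is that $\cl_\vr(\psi_n)-\frac1p\cl_\vr'(\psi_n)[\psi_n]$ isolates the quadratic part, whereas it is $\cl_\vr(\psi_n)-\frac12\cl_\vr'(\psi_n)[\psi_n]=\frac{p-2}{2p}\,\vr^{-m}\int_M b|\psi_n|^{p}\,d\vol_\ig$ that directly yields $t_n^{p}\lesssim c+o(1)+o(1)\|\psi_n\|_\vr$; combined with your bound $\|\psi_n\|_\vr\lesssim o(1)+t_n^{p-1}$ this closes the boundedness estimate exactly as you intend, so the slip is cosmetic.
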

%

Now, for the functionals $\cl_\vr$, we have
\begin{Prop}\label{reduction1}
For each $\vr>0$ small,
\begin{itemize}
\item[$(1)$] there exists $g_\vr\in C^1(\ch_\vr^+,\ch_\vr^-)$ such that
\[
\forall w\in\ch_\vr^-, \  w\neq g_\vr(u) \Rightarrow
\cl_\vr(u+w)<\cl_\vr(u+g_\vr(u)),
\]
in particular,
\[
\|g_\vr(u)\|_\vr^2\leq \frac2{\vr^m\,p}\int_M b|u|^{p}d\vol_\ig
\]
and $\cl_\vr'(u+g_\vr(u))[w]\equiv 0$ for all $w\in\ch_\vr^-$;

\item[$(2)$] denoted by
\[
I_\vr: \ch_\vr^+\to \R, \quad I_\vr(u)=\cl_\vr(u+g_\vr(u)),
\]
if $\{u_n\}$ is a $(P.S.)$-sequence for $I_\vr$ then $\{u_n+g_\vr(u_n)\}$ is a
$(P.S.)$-sequence for $\cl_\vr$; 

\item[$(3)$] there exists $\psi_\vr\in\ch$ such that $\cl_\vr'(\psi_\vr)=0$ and $\psi_\vr\neq0$.
\end{itemize}
\end{Prop}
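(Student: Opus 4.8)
The plan is to carry out a global Lyapunov--Schmidt reduction on the negative subspace $\ch_\vr^-$, in the spirit of Buffoni--Jeanjean--Stuart, and then apply the Mountain Pass Theorem to the resulting functional on $\ch_\vr^+$; the three items are established in this order. \emph{For $(1)$}, fix $u\in\ch_\vr^+$ and consider $h_u:\ch_\vr^-\to\R$, $h_u(w):=\cl_\vr(u+w)$. From the second expression in \eqref{the functional},
\[
h_u(w)=\frac12\|u\|_\vr^2-\frac12\|w\|_\vr^2-\frac1{\vr^m p}\int_M b\,|u+w|^{p}\,d\vol_\ig ,
\]
so $h_u(w)\le\frac12\|u\|_\vr^2-\frac12\|w\|_\vr^2\to-\infty$ as $\|w\|_\vr\to\infty$. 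Since $b>0$ and $t\mapsto|t|^p$ is convex, $w\mapsto\int_M b\,|u+w|^p$ is convex, so $h_u$ is strictly concave; being moreover $C^2$ and anticoercive, it has a unique maximiser $g_\vr(u)$, with $\cl_\vr(u+w)<\cl_\vr(u+g_\vr(u))$ for $w\ne g_\vr(u)$ and Euler equation $\cl_\vr'(u+g_\vr(u))[w]=0$ for all $w\in\ch_\vr^-$. The map $u\mapsto g_\vr(u)$ is $C^1$ by the implicit function theorem applied to $(u,w)\mapsto P^-\cl_\vr'(u+w)$ (with $P^-$ the orthogonal projection onto $\ch_\vr^-$), whose partial $w$-derivative at $(u,g_\vr(u))$ is $D^2h_u(g_\vr(u))\le-\id$, hence an isomorphism; this is where $\cl_\vr\in C^2$ enters. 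Finally, evaluating the maximality inequality at $w=0$ gives $\cl_\vr(u+g_\vr(u))\ge\cl_\vr(u)$, which after cancellation and dropping the nonnegative term $\frac1{\vr^mp}\int_M b\,|u+g_\vr(u)|^p$ yields $\|g_\vr(u)\|_\vr^2\le\frac2{\vr^mp}\int_M b\,|u|^p\,d\vol_\ig$.

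\emph{For $(2)$}, since $g_\vr'(u)$ maps $\ch_\vr^+$ into $\ch_\vr^-$, the chain rule together with the Euler equation in $(1)$ gives, for $v\in\ch_\vr^+$,
\[
I_\vr'(u)[v]=\cl_\vr'(u+g_\vr(u))\big[v+g_\vr'(u)v\big]=\cl_\vr'(u+g_\vr(u))[v].
\]
As $\cl_\vr'(u+g_\vr(u))$ annihilates $\ch_\vr^-$ and the splitting $\ch=\ch_\vr^+\oplus\ch_\vr^-$ is $\|\cdot\|_\vr$-orthogonal, this forces $\|\cl_\vr'(u+g_\vr(u))\|_{\ch^*}=\|I_\vr'(u)\|_{(\ch_\vr^+)^*}$. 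Combined with $\cl_\vr(u+g_\vr(u))=I_\vr(u)$, a $(P.S.)$-sequence $\{u_n\}$ for $I_\vr$ produces the $(P.S.)$-sequence $\{u_n+g_\vr(u_n)\}$ for $\cl_\vr$; in particular, by Lemma \ref{PS-condition}, $I_\vr$ itself satisfies $(P.S.)$.

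\emph{For $(3)$}, I would check the mountain pass geometry of $I_\vr\in C^1(\ch_\vr^+,\R)$. One has $g_\vr(0)=0$, hence $I_\vr(0)=0$; and $I_\vr(u)\ge\cl_\vr(u)\ge\frac12\|u\|_\vr^2-C\|u\|_\vr^p$ by Lemma \ref{embeding lemma}, so (as $p>2$) there are $\rho,\alpha>0$ with $I_\vr\ge\alpha$ on $\{\|u\|_\vr=\rho\}$. For the far point, fix $e_0\in\ch_\vr^+$ with $\|e_0\|_\vr=1$ and show $\cl_\vr$ is anticoercive on $\ch_\vr^-\oplus\R e_0$: for $v_n=w_n+s_ne_0$ with $\|v_n\|_\vr\to\infty$, normalise $\bar v_n=v_n/\|v_n\|_\vr=\bar w_n+\bar s_ne_0$ and pass to a subsequence with $\bar s_n\to\bar s\ge0$ and $\bar v_n\to\bar v$ in $L^p$ (compact embedding, $p<m^*$); if $\bar s>0$ then $\bar v\ne0$ and $\frac1{\vr^mp}\int_M b\,|v_n|^p=\|v_n\|_\vr^p\,\frac1{\vr^mp}\int_M b\,|\bar v_n|^p$ dominates $\frac12 s_n^2\le\frac12\|v_n\|_\vr^2$ because $p>2$, while if $\bar s=0$ then $\frac12 s_n^2-\frac12\|w_n\|_\vr^2=\frac12\|v_n\|_\vr^2(2\bar s_n^2-1)\to-\infty$; either way $\cl_\vr(v_n)\to-\infty$. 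Since $te_0+g_\vr(te_0)$ lies in $\ch_\vr^-\oplus\R e_0$ with norm $\ge t$, this gives $I_\vr(te_0)=\cl_\vr(te_0+g_\vr(te_0))\to-\infty$, so $e:=t_0e_0$ with $t_0$ large has $\|e\|_\vr>\rho$ and $I_\vr(e)<0$. The Mountain Pass Theorem, with $(P.S.)$ from $(2)$ and Lemma \ref{PS-condition}, then produces $u_\vr\ne0$ with $I_\vr'(u_\vr)=0$ and $I_\vr(u_\vr)\ge\alpha$; and $\psi_\vr:=u_\vr+g_\vr(u_\vr)$ satisfies $\cl_\vr'(\psi_\vr)=0$ (zero on $\ch_\vr^-$ by $(1)$, zero on $\ch_\vr^+$ since $I_\vr'(u_\vr)=0$) with $\cl_\vr(\psi_\vr)=I_\vr(u_\vr)>0$, hence $\psi_\vr\ne0$.

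The only step that demands genuine care is the anticoercivity of $\cl_\vr$ on $\ch_\vr^-\oplus\R e_0$ in $(3)$ --- i.e.\ excluding a conspiracy between the many negative directions and the single positive direction --- which the normalisation-plus-compactness argument above settles, crucially using $2<p<m^*$; everything else is soft convex analysis and the standard reduction machinery.
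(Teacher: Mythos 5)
Your proof is correct and follows the same route the paper takes: the paper's proof is a one-line appeal to the global Lyapunov--Schmidt reduction plus Mountain Pass argument of Buffoni--Jeanjean--Stuart \cite[Section 2]{BJS}, and you have simply written out those details (strict concavity of $w\mapsto\cl_\vr(u+w)$ on $\ch_\vr^-$, the implicit function theorem for $C^1$ regularity of $g_\vr$, the $(P.S.)$ transfer, and the mountain pass geometry of $I_\vr$). No gaps.
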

\begin{proof}
By Lemma \ref{PS-condition}, we may apply the arguments in \cite[Section 2]{BJS} to get
the existence of one critical point for $I_\vr$ of Mountain Pass type.
\end{proof}

Next we are intend to give a characterization of the critical point $\psi_\vr$ obtained
in the above proposition.

\begin{Lem}\label{reduction2}
For every $u\in \ch_\vr^+\setminus\{0\}$, the map $I_{\vr,u}:\R\to\R$,
$I_{\vr,u}(t)=I_\vr(tu)$, is of class $C^2$ and satisfies
\[
I_{\vr,u}'(t)=0,\ t>0 \quad \Longrightarrow \quad I_{\vr,u}''(t)<0.
\]
Moreover $I_{\vr,u}(0)=I_{\vr,u}'(0)=0$, $I_{\vr,u}''(0)>0$.
\end{Lem}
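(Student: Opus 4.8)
The statement concerns the one-variable function $I_{\vr,u}(t)=I_\vr(tu)$ for $u\in\ch_\vr^+\setminus\{0\}$. The plan is to write everything out in terms of $\cl_\vr$ and the reduction map $g_\vr$, exploit the variational characterization of $g_\vr$ from Proposition~\ref{reduction1}, and then reduce the monotonicity claim to a pointwise convexity property of the nonlinear term $b|\cdot|^p$.

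First I would record the basic regularity: since $\cl_\vr\in C^2(\ch,\R)$ (from Lemma~\ref{embeding lemma}) and $g_\vr\in C^1$, the composition $t\mapsto \cl_\vr(tu+g_\vr(tu))$ is at least $C^1$; to get $C^2$ one uses that $g_\vr$ is in fact $C^1$ with a derivative that is itself differentiable along the smooth ray $t\mapsto tu$, because the defining equation $\cl_\vr'(tu+g_\vr(tu))|_{\ch_\vr^-}\equiv0$ can be differentiated via the implicit function theorem (the relevant second-order operator on $\ch_\vr^-$ is invertible since $\cl_\vr$ is strictly concave there, the quadratic part being negative definite and the nonlinearity convex). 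The values at $t=0$ are immediate: $g_\vr(0)=0$, so $I_{\vr,u}(0)=0$; $I_{\vr,u}'(0)=\cl_\vr'(0)[u+Dg_\vr(0)u]=0$ since $\cl_\vr'(0)=0$; and $I_{\vr,u}''(0)=\cl_\vr''(0)[u,u]+\cl_\vr''(0)[\dots]$, but $\cl_\vr''(0)=A$ restricted appropriately, and on $\ch_\vr^+$ one has $\cl_\vr''(0)[u,u]=\|u\|_\vr^2>0$ — and the $g_\vr$-corrections vanish because $Dg_\vr(0)=0$ (differentiate the identity $\cl_\vr'(g_\vr(u))|_{\ch_\vr^-}=0$ at $u=0$ and use $\cl_\vr''(0)=A$ with the $\pm$ decomposition), so $I_{\vr,u}''(0)=\|u\|_\vr^2>0$.

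The heart of the matter is the implication $I_{\vr,u}'(t)=0,\ t>0 \Rightarrow I_{\vr,u}''(t)<0$. By the envelope property in Proposition~\ref{reduction1}(1), $I_{\vr,u}'(t)=\cl_\vr'(tu+g_\vr(tu))[u]$ and $I_{\vr,u}''(t)$ can be computed; a clean route is to use that at a critical point $t_0$ of $I_{\vr,u}$ the full gradient $\cl_\vr'(t_0u+g_\vr(t_0u))$ vanishes on all of $\ch$ (it vanishes on $\ch_\vr^-$ always and on $u$ by criticality, hence on $\R u\op\ch_\vr^-$, and the reduced functional's critical points are genuine critical points of $\cl_\vr$ by the standard reduction — this is exactly the mechanism behind Proposition~\ref{reduction1}(2)–(3)). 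Writing $\psi=t_0u+g_\vr(t_0u)$ with $\cl_\vr'(\psi)=0$, one has $\|\psi^+\|_\vr^2-\|\psi^-\|_\vr^2=\frac1{\vr^m}\int_M b|\psi|^p$ and $\psi^+=t_0u$. Then $I_{\vr,u}''(t_0)$ is bounded above by the second variation of $\cl_\vr$ at $\psi$ in the direction $u$ after optimizing over $\ch_\vr^-$, i.e. $I_{\vr,u}''(t_0)\le \max_{w\in\ch_\vr^-}\cl_\vr''(\psi)[u+w,u+w]$, and this max is attained and equals $\|u\|_\vr^2-(p-1)\frac1{\vr^m}\inf_w\int_M b|\psi|^{p-2}|u+w|^2 + (\text{cross terms})$; the point is that the Hessian of $-\frac1{\vr^m p}\int b|\cdot|^p$ is negative semidefinite (strictly, on $\mathrm{span}(\psi)$) because $z\mapsto|z|^p$ is strictly convex for $p>2$, so $\cl_\vr''(\psi)[\psi,\psi]=\|\psi^+\|_\vr^2-\|\psi^-\|_\vr^2-(p-1)\frac1{\vr^m}\int b|\psi|^p = \frac1{\vr^m}\int b|\psi|^p - (p-1)\frac1{\vr^m}\int b|\psi|^p = -(p-2)\frac1{\vr^m}\int b|\psi|^p<0$. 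Scaling this back down the ray $\R u$ (using $\psi^+=t_0u$) gives $I_{\vr,u}''(t_0)<0$; the key inequality is $(p-2)>0$.

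The main obstacle is the bookkeeping connecting $I_{\vr,u}''(t_0)$ to $\cl_\vr''(\psi)$ restricted to the subspace $\R u\op\ch_\vr^-$: one must justify that differentiating through $g_\vr$ twice produces exactly the "max over $\ch_\vr^-$ of the Hessian" and no stray positive terms. This is where the $C^2$ character of $\cl_\vr$ and the strict concavity of $w\mapsto\cl_\vr(tu+w)$ on $\ch_\vr^-$ are essential: strict concavity makes $g_\vr(tu)$ the unique maximizer, the implicit function theorem gives $Dg_\vr$ with the right formula, and the second derivative of $I_{\vr,u}$ then equals the Schur complement of the Hessian of $\cl_\vr$ with respect to the $\ch_\vr^-$ block, which is $\le$ the $uu$-entry. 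I would handle this by restricting $\cl_\vr$ to the finite-"codimension" situation $\R u\op\ch_\vr^-$, noting $g_\vr(tu)$ solves the fixed-point/gradient equation there, and invoking the standard fact that a $C^2$ function concave in one group of variables has a $C^2$ partial maximum whose Hessian is the corresponding Schur complement; then the sign computation above closes the proof.
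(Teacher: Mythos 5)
Your reduction of the problem to the Hessian $H=\cl_\vr''(\psi)$ at $\psi=t_0u+g_\vr(t_0u)$, restricted to $\R u\op\ch_\vr^-$, is the right framework and matches the paper's strategy, and your treatment of $t=0$ is fine. But the step that is supposed to close the argument at a critical point $t_0>0$ does not work. First, a minor point: criticality of the one-variable function $I_{\vr,u}$ at $t_0$ only tells you that $\cl_\vr'(\psi)$ vanishes on $\R u\op\ch_\vr^-$, not on all of $\ch$ (a critical point of $t\mapsto I_\vr(tu)$ need not be a critical point of $I_\vr$ on $\ch_\vr^+$); fortunately the only consequence you actually use, namely $\cl_\vr'(\psi)[\psi]=0$ and hence $H[\psi,\psi]=-(p-2)\vr^{-m}\int_M b|\psi|^p\,d\vol_\ig<0$, survives. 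The fatal problem is that your transfer of this negativity to $I_{\vr,u}''(t_0)$ goes in the wrong direction. One has $I_{\vr,u}''(t_0)=H[\xi,\xi]=\max_{w\in\ch_\vr^-}H[u+w,u+w]$ with $\xi=u+g_\vr'(t_0u)[u]$ (the Schur complement obtained by eliminating the block on which $H$ is negative definite), and a maximum over $w$ is bounded \emph{below}, not above, by its value at any particular $w$ --- in particular by $t_0^{-2}H[\psi,\psi]$, which corresponds to the choice $w=t_0^{-1}g_\vr(t_0u)$. So $H[\psi,\psi]<0$ only yields a lower bound for $I_{\vr,u}''(t_0)$ and proves nothing. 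The maximizing direction $\xi$ differs from $t_0^{-1}\psi$ by the generally nonzero element $t_0^{-1}\chi$, where $\chi=g_\vr'(t_0u)[t_0u]-g_\vr(t_0u)\in\ch_\vr^-$, and the entire difficulty of the lemma is to show that the extra terms $2H[\psi,\chi]+H[\chi,\chi]$ cannot destroy the negativity.

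This is precisely what the paper's computation \eqref{eee1} accomplishes: it expands $I_\vr''(u)[u,u]=H[\psi+\chi,\psi+\chi]$, uses $\cl_\vr'(\psi)|_{\ch_\vr^-}\equiv0$ to rewrite the cross terms as $\Psi_\vr'(\psi)[\chi]-\Psi_\vr''(\psi)[\psi,\chi]$, and then applies a pointwise algebraic inequality for the strictly convex function $z\mapsto|z|^p$, $p>2$, which bounds the sum of \emph{all} the nonlinear contributions (including those quadratic in $\chi$) by $-\frac{p-2}{p-1}\vr^{-m}\int_M b|\psi|^p\,d\vol_\ig$, leaving the manifestly nonpositive term $-\|\chi\|_\vr^2$ from the quadratic part. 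Your proposal contains no substitute for this inequality; the phrases ``scaling back down the ray'' and the unspecified ``cross terms'' are exactly where the proof lives. Abstract strict concavity of $w\mapsto\cl_\vr(tu+w)$ on $\ch_\vr^-$ gives you existence, uniqueness and differentiability of $g_\vr$ and the Schur-complement formula, but it cannot give the sign: you need the quantitative convexity estimate on $\Psi_\vr$ relating $\Psi_\vr'(\psi)[\psi+2\chi]$, $\Psi_\vr''(\psi)[\psi+\chi,\psi+\chi]$ and $\Psi_\vr(\psi)$, and that estimate must be stated and proved to complete the argument.
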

\begin{proof}
In order to see this, we compute  $I_{\vr,u}'(t)=\cl_\vr'(tu+g_\vr(tu))[u]$ which
suggests that $I_{\vr,u}$ is $C^2$. As we can see, the implication is equivalent to:
\begin{\equ}\label{I-equivalent}
I_\vr'(u)[u]=0, \ u\neq0 \quad \Longrightarrow \quad I_\vr''(u)[u,u]<0.
\end{\equ}
For simplicity, let us denote $\Psi_\vr:\ch\to\R$ by $\Psi_\vr(\psi)
=\frac1{\vr^m\,p}\int_M b |\psi|^{p}d\vol_\ig$ and set
$\psi=u+g_\vr(u)$ and $\chi=g_\vr'(u)[u]-g_\vr(u)$. By using
$\cl_\vr'(u+g_\vr(u))|_{\ch_\vr^-}\equiv0$, we have \eqref{I-equivalent}
is a consequence of the following computation:
\begin{eqnarray}\label{eee1}
I_\vr''(u)[u,u]&=&\cl_\vr''(\psi)[u+g_\vr'(u)[u],u]=\cl_\vr''(\psi)[\psi+\chi,\psi+\chi]  \nonumber\\
&=&\cl_\vr''(\psi)[\psi,\psi]+2\cl_\vr''(\psi)[\psi,\chi]
+\cl_\vr''(\psi)[\chi,\chi]  \nonumber\\
&=&I_\vr'(u)[u]+\big(\Psi_\vr'(\psi)[\psi]-\Psi_\vr''(\psi)[\psi,\psi]\big)
+2\big(\Psi_\vr'(\psi)[\chi]-\Psi_\vr''(\psi)[\psi,\chi] \big) \nonumber\\
& &\quad -\Psi_\vr''(\psi)[\chi,\chi]
-\|\chi\|_\vr^2 \nonumber\\
& \leq& I_\vr'(u)[u]-\frac1{\vr^m}\frac{p-2}{p-1}\int_M b|\psi|^{p}
 d\vol_\ig-\|\chi\|_\vr^2.
\end{eqnarray}
\end{proof}

A natural constraint for $I_\vr$ is to consider the associated Nehari manifold:
\[
\msn_\vr:=\big\{ u\in\ch_\vr^+\setminus\{0\}:\, I_\vr'(u)[u]=0 \big\}.
\]
By Lemma \ref{reduction2} this is a smooth submanifold of codimension $1$
in $\ch_\vr^+$. And consequently, the critical point found in Proposition \ref{reduction1} $(3)$
can be characterized by
\begin{\equ}\label{ga-vr}
\ga_\vr:=\cl_\vr(\psi_\vr)=\inf_{u\in\ch_\vr^+\setminus\{0\}}
\max_{\psi\in \R u\op \ch_\vr^-}\cl_\vr(\psi)
=\inf_{u\in\ch_\vr^+\setminus\{0\}}\max_{t>0} I_\vr(tu)
=\inf_{u\in\msn_\vr}I_\vr(u).
\end{\equ}
For later purpose, it is worth to point out that, by Lemma \ref{embeding lemma},
there holds
\begin{\equ}\label{ga-vr geq tau}
I_\vr(tu)
\geq\frac{t^2}2\|u\|_\vr^2-\frac{c_{p}^{p}\, t^{p}}{p}\max b\, \|u\|_\vr^{p}
\qquad \forall u\in\ch_\vr^+\setminus\{0\}, \ \forall t>0.
\end{\equ}
Hence there exists $\tau_0>0$ independent of $\vr$ such that $\ga_\vr\geq\tau_0$.

In what follows, we intend to pass to the limit $\vr\to0$ and consider the
convergence of the min-max level $\ga_\vr$. The idea is to use certain test spinors
in the functional $\cl_\vr$. For this purpose, first of
all, we need to establish an upper bound estimate. Without loss of generality,
we assume that $\{\phi_\vr\}\subset\ch$ is an arbitrary sequence such that
\begin{\equ}\label{assumption0}
c_1\leq \cl_\vr(\phi_\vr)\leq c_2 \quad \text{and} \quad
\|\cl_\vr'(\phi_\vr)\|_\vr\to0
\end{\equ}
as $\vr\to0$ for some constants $c_1,c_2>0$. Here, we have
identified the dual space $\ch^*$ with $\ch$.

\begin{Lem}\label{a1}
Under \eqref{assumption0}, we have
\begin{itemize}
\item[$(1)$] $\|\phi_\vr\|_\vr$ is uniformly bounded in $\vr$;

\item[$(2)$] $\|\phi_\vr^- - g_\vr(\phi_\vr^+)\|_\vr\leq O\big( \|\cl_\vr'(\phi_\vr)\|_\vr \big)$
    as $ \vr\to0$;

\item[$(3)$] $I_\vr'(\phi_\vr^+)\to0$ as $\vr\to0$ in the dual space of $\ch_\vr^+$.
\end{itemize}
\end{Lem}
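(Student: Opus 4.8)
The plan is to establish the three assertions in sequence, using the reduction machinery from Proposition~\ref{reduction1} together with the $(P.S.)$-compactness and the uniform embedding constants from Lemma~\ref{embeding lemma}.

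\textbf{Step 1: Uniform boundedness of $\|\phi_\vr\|_\vr$.} I would start from the standard test: evaluate $\cl_\vr(\phi_\vr)-\frac1p\cl_\vr'(\phi_\vr)[\phi_\vr]$. The quadratic part $\frac12(A\psi,\psi)$ and the linear-in-$\psi$ structure of $A\psi$ combine so that this difference equals $\big(\frac12-\frac1p\big)\big(\|\phi_\vr^+\|_\vr^2-\|\phi_\vr^-\|_\vr^2\big)$ up to the nonlinear terms, which cancel favorably because $\int_M b|\psi|^p$ scales homogeneously of degree $p$. More precisely, one gets $\big(\frac12-\frac1p\big)\big(\|\phi_\vr^+\|_\vr^2-\|\phi_\vr^-\|_\vr^2\big) = \cl_\vr(\phi_\vr)-\frac1p\cl_\vr'(\phi_\vr)[\phi_\vr] \leq c_2 + \frac1p\|\cl_\vr'(\phi_\vr)\|_\vr\|\phi_\vr\|_\vr$. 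The subtlety here — and the main obstacle of Step 1 — is that this controls only $\|\phi_\vr^+\|_\vr^2-\|\phi_\vr^-\|_\vr^2$, not the full norm, because the functional is strongly indefinite. To bound $\|\phi_\vr^-\|_\vr$ separately I would test $\cl_\vr'(\phi_\vr)$ against $\phi_\vr^-\in\ch_\vr^-$, use that $A$ is negative definite on $\ch_\vr^-$ to get $\|\phi_\vr^-\|_\vr^2 \leq \Psi_\vr'(\phi_\vr)[\phi_\vr^-] + \|\cl_\vr'(\phi_\vr)\|_\vr\|\phi_\vr^-\|_\vr$, and control $\Psi_\vr'(\phi_\vr)[\phi_\vr^-]$ by H\"older and the $\vr$-independent embedding $\ch\hookrightarrow L^p$ as $\leq C\|\phi_\vr\|_\vr^{p-1}\|\phi_\vr^-\|_\vr$. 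Doing the same for $\phi_\vr^+$ yields $\|\phi_\vr^+\|_\vr^2+\|\phi_\vr^-\|_\vr^2 \leq C\|\phi_\vr\|_\vr^p + (\text{lower order})$, and combining with the lower bound $\cl_\vr(\phi_\vr)\geq c_1$ (which forces $\int_M b|\phi_\vr|^p$ to stay bounded below away from the trivial behavior) together with the Nehari-type analysis from Lemma~\ref{reduction2} forces $\|\phi_\vr\|_\vr$ to remain in a bounded interval uniformly in $\vr$. Since $p<m^*$ the exponent $p$ beats $2$ only when the norm is large, so a contradiction argument (suppose $\|\phi_\vr\|_\vr\to\infty$ along a subsequence, renormalize) closes this cleanly.

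\textbf{Step 2: $\phi_\vr^-$ is close to $g_\vr(\phi_\vr^+)$.} By the defining property of $g_\vr$ from Proposition~\ref{reduction1}$(1)$, $\cl_\vr'(\phi_\vr^+ + g_\vr(\phi_\vr^+))|_{\ch_\vr^-}\equiv0$. Write $\eta_\vr := \phi_\vr^- - g_\vr(\phi_\vr^+)\in\ch_\vr^-$. Then, testing the difference $\cl_\vr'(\phi_\vr) - \cl_\vr'(\phi_\vr^+ + g_\vr(\phi_\vr^+))$ against $\eta_\vr$, the quadratic part gives exactly $-\|\eta_\vr\|_\vr^2$ (negative definiteness on $\ch_\vr^-$), while the nonlinear part is a difference of $\Psi_\vr'$ evaluated at two points whose difference is $\eta_\vr$; strict convexity of $\Psi_\vr$ (inherited from $t\mapsto|t|^p$, $p>2$, and positivity of $b$) makes $\langle \Psi_\vr'(\phi_\vr)-\Psi_\vr'(\phi_\vr^++g_\vr(\phi_\vr^+)),\eta_\vr\rangle \geq 0$. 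Therefore $\|\eta_\vr\|_\vr^2 \leq \cl_\vr'(\phi_\vr)[\eta_\vr] \leq \|\cl_\vr'(\phi_\vr)\|_\vr\|\eta_\vr\|_\vr$, which gives $\|\eta_\vr\|_\vr \leq \|\cl_\vr'(\phi_\vr)\|_\vr = O(\|\cl_\vr'(\phi_\vr)\|_\vr)$. (Here I am using Step~1 and the uniform $L^p$ bound on $\|g_\vr(\phi_\vr^+)\|_\vr$ from the estimate in Proposition~\ref{reduction1}$(1)$ to keep all the $\Psi_\vr''$-type remainder constants $\vr$-independent; this is where the uniformity of $c_q$ in Lemma~\ref{embeding lemma} is essential.)

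\textbf{Step 3: $I_\vr'(\phi_\vr^+)\to0$.} Recall $I_\vr'(u)[v] = \cl_\vr'(u+g_\vr(u))[v]$ for $v\in\ch_\vr^+$, using $\cl_\vr'(u+g_\vr(u))|_{\ch_\vr^-}\equiv0$ and the chain rule. So for $v\in\ch_\vr^+$ with $\|v\|_\vr\leq1$,
\[
I_\vr'(\phi_\vr^+)[v] = \cl_\vr'(\phi_\vr^+ + g_\vr(\phi_\vr^+))[v] = \cl_\vr'(\phi_\vr)[v] + \big(\cl_\vr'(\phi_\vr^+ + g_\vr(\phi_\vr^+)) - \cl_\vr'(\phi_\vr)\big)[v].
\]
The first term is $O(\|\cl_\vr'(\phi_\vr)\|_\vr)$. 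For the second term, the quadratic part contributes $-\langle \eta_\vr, v\rangle_\vr = 0$ since $\eta_\vr\in\ch_\vr^-\perp v$; what remains is $\Psi_\vr'(\phi_\vr^+ + g_\vr(\phi_\vr^+))[v] - \Psi_\vr'(\phi_\vr)[v]$, which by the mean value theorem is bounded by $\|\Psi_\vr''\|\cdot\|\eta_\vr\|_\vr\cdot\|v\|_\vr$; using the uniform embedding $\ch\hookrightarrow L^p$ and Step~1 to bound the $\Psi_\vr''$ operator norm by a $\vr$-independent constant times $\|\phi_\vr\|_\vr^{p-2}$, this is $O(\|\eta_\vr\|_\vr) = O(\|\cl_\vr'(\phi_\vr)\|_\vr)$ by Step~2. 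Taking the supremum over such $v$ gives $\|I_\vr'(\phi_\vr^+)\|_\vr \to 0$ as $\vr\to0$, which is $(3)$.

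The genuine difficulty is concentrated in Step~1: one must beat the strong indefiniteness to get a \emph{two-sided} norm bound uniform in $\vr$, and this requires carefully exploiting that all the relevant Sobolev embedding constants in Lemma~\ref{embeding lemma} are $\vr$-free for small $\vr$ — otherwise the estimates would degenerate as $\vr\to0$. Steps~2 and~3 are then fairly mechanical consequences of the convexity of $\Psi_\vr$ and the defining property of the reduction map $g_\vr$.
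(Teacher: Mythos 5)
Your proof is correct and follows essentially the same route as the paper: part (1) is the standard boundedness argument for $(P.S.)$-sequences of strongly indefinite functionals using the $\vr$-uniform embedding constants (the paper simply defers to Lemma~\ref{PS-condition}), part (2) is exactly the paper's computation testing $\cl_\vr'(\phi_\vr)-\cl_\vr'(\phi_\vr^++g_\vr(\phi_\vr^+))$ against $v_\vr=\phi_\vr^--g_\vr(\phi_\vr^+)$ and invoking monotonicity of $\Psi_\vr'$ from convexity, and part (3) is the paper's ``direct consequence of $C^2$ smoothness'' spelled out via the mean value theorem. The only blemish is a harmless sign slip in Step 2 (the correct inequality is $\|\eta_\vr\|_\vr^2\leq -\cl_\vr'(\phi_\vr)[\eta_\vr]$, which of course still yields the stated bound).
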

\begin{proof}
For the boundedness, we recall that Lemma \ref{embeding lemma} implies
the embedding constant for $\ch\hookrightarrow L^{p^*}$ is independent of
$\vr$, and hence the arguments in Lemma \ref{PS-condition} can be employed.

For $(2)$, let us first set $z_\vr=\phi_\vr^++g_\vr(\phi_\vr^+)$ and
$v_\vr=\phi_\vr^--g_\vr(\phi_\vr^+)$. Then we have $v_\vr\in \ch_\vr^-$
and, by the definition of $g_\vr$,
\[
0=\cl_\vr'(z_\vr)[v_\vr]=-\inp{g_\vr(\phi_\vr^+)}{v_\vr}_\vr-\frac1{\vr^m}\real\int_M
b |z_\vr|^{p-2}(z_\vr,v_\vr)d\vol_\ig.
\]
Since $\|\cl_\vr'(\phi_\vr)\|_\vr\to 0$ as $\vr\to0$, it follows that
\[
o(\|v_\vr\|_\vr)=\cl_\vr'(\phi_\vr)[v_\vr]=-\inp{\phi_\vr^-}{v_\vr}-\frac1{\vr^m}
\real\int_M b |\phi_\vr|^{p-2}(\phi_\vr,v_\vr)d\vol_\ig.
\]
And hence, we get
\begin{\equ}\label{e1}
\aligned
o(\|v_\vr\|_\vr)&=\|v_\vr\|_\vr^2+\frac1{\vr^m}
\real\int_M b |\phi_\vr|^{p-2}(\phi_\vr,v_\vr)d\vol_\ig \\
&\qquad -\frac1{\vr^m}\real\int_M
b |z_\vr|^{p-2}(z_\vr,v_\vr)d\vol_\ig.
\endaligned
\end{\equ}
Remark that the map $\psi\to |\psi|^{p}$ is convex, we have
\[
\frac1{\vr^m}
\real\int_M b |\phi_\vr|^{p-2}(\phi_\vr,v_\vr)d\vol_\ig
-\frac1{\vr^m}\real\int_M
b |z_\vr|^{p-2}(z_\vr,v_\vr)d\vol_\ig\geq 0.
\]
Thus, from \eqref{e1}, we can infer that $\|v_\vr\|_\vr\leq O\big( \|\cl_\vr'(\phi_\vr)\|_\vr \big)$
as $\vr\to0$.

In order to check $(3)$ we compute $I_\vr'(\phi_\vr^+)=\cl_\vr'\big(\phi_\vr^++g_\vr(\phi_\vr^+)\big)$,
which implies $\|I_\vr'(\phi_\vr^+)\|_\vr\to0$ as $\vr\to0$ is a
direct consequence of the $C^2$ smoothness of $\cl_\vr$.
\end{proof}

Next, let us introduce the functional $H_\vr: \ch_\vr^+\to\R$ by $H_\vr(u)=I_\vr'(u)[u]$.
Then, it is clear that $H_\vr$ is $C^1$ and its derivative is given by the formula
\[
H_\vr'(u)[w]=I_\vr'(u)[w]+I_\vr''(u)[u,w]
\]
for $u,w\in\ch_\vr^+$. We also have $\msn_\vr=H_\vr^{-1}(0)\setminus\{0\}$.
Moreover, by \eqref{eee1}, we have
\begin{\equ}\label{eee2}
H_\vr'(u)[u]\leq 2 H_\vr(u)-\frac1{\vr^m}\frac{p-2}{p-1}\int_M b \big|u+g_\vr(u)\big|^{p}d\vol_\ig.
\end{\equ}
for any $u\in \ch_\vr^+$.
\begin{Prop}\label{estimate prop}
For the sequence $\{\phi_\vr\}$ in \eqref{assumption0}, there exists
$\{t_\vr\}\subset\R$ such that $t_\vr\phi_\vr^+\in\msn_\vr$ and
$|t_\vr-1|\leq O\big(\|I_\vr'(\phi_\vr^+)\|_\vr\big)$.
\end{Prop}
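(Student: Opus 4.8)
The plan is to locate $t_\vr$ as the unique positive zero of the one-dimensional function $f_\vr(t) := H_\vr(t\phi_\vr^+) = I_\vr'(t\phi_\vr^+)[t\phi_\vr^+] = t\, I_\vr'(t\phi_\vr^+)[\phi_\vr^+]$, and then control its distance from $1$ by a quantitative implicit-function argument. First I would recall from Lemma \ref{reduction2} (applied with $u = \phi_\vr^+$) that the map $t \mapsto I_\vr(t\phi_\vr^+)$ is $C^2$, vanishes to second order at $t=0$ with positive second derivative, and has the property that any positive critical point is a strict maximum; hence $f_\vr$ is positive for small $t>0$, and by the coercivity estimate \eqref{ga-vr geq tau} together with the super-quadratic growth of the nonlinearity, $f_\vr(t) \to -\infty$ as $t \to +\infty$. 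Therefore $f_\vr$ has a zero $t_\vr > 0$, and by Lemma \ref{reduction2} this zero is unique, so $t_\vr\phi_\vr^+ \in \msn_\vr$ is well-defined.

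Next I would estimate $|t_\vr - 1|$. The key point is that by Lemma \ref{a1}(3), $I_\vr'(\phi_\vr^+) \to 0$, so $f_\vr(1) = I_\vr'(\phi_\vr^+)[\phi_\vr^+]$ is small, of size $O\big(\|I_\vr'(\phi_\vr^+)\|_\vr \|\phi_\vr^+\|_\vr\big) = O\big(\|I_\vr'(\phi_\vr^+)\|_\vr\big)$ using the uniform bound on $\|\phi_\vr^+\|_\vr$ from Lemma \ref{a1}(1). To turn smallness of $f_\vr(1)$ into smallness of $|t_\vr-1|$ I need a lower bound on $|f_\vr'(t)|$ on the relevant interval. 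Here \eqref{eee2}, evaluated at $u = t_\vr\phi_\vr^+ \in \msn_\vr$ where $H_\vr = 0$, gives $H_\vr'(t_\vr\phi_\vr^+)[t_\vr\phi_\vr^+] \le -\frac{1}{\vr^m}\frac{p-2}{p-1}\int_M b\,|t_\vr\phi_\vr^+ + g_\vr(t_\vr\phi_\vr^+)|^p\,d\vol_\ig$, and since $f_\vr'(t_\vr) = \tfrac1{t_\vr} H_\vr'(t_\vr\phi_\vr^+)[t_\vr\phi_\vr^+]$, this yields a strictly negative, uniformly-away-from-zero bound on $f_\vr'$ at the zero $t_\vr$ (once one notes $t_\vr$ and $\|\phi_\vr^+\|_\vr$ are bounded above and below, so the $L^p$-norm of $t_\vr\phi_\vr^+ + g_\vr(t_\vr\phi_\vr^+)$ is bounded below — this in turn follows since $t_\vr\phi_\vr^+ \in \msn_\vr$ forces $\|t_\vr\phi_\vr^+\|_\vr \ge \tau_0' > 0$ by \eqref{ga-vr geq tau}, and $\|g_\vr(t_\vr\phi_\vr^+)\|_\vr$ is controlled by the same $L^p$-integral via Proposition \ref{reduction1}(1)). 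A mean value theorem argument on $f_\vr$ between $1$ and $t_\vr$ then gives $|t_\vr - 1| \le |f_\vr(1)|/\inf|f_\vr'| \le O\big(\|I_\vr'(\phi_\vr^+)\|_\vr\big)$.

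The main obstacle I anticipate is the bookkeeping needed to make all the implicit constants genuinely $\vr$-independent: one must simultaneously pin down a uniform upper bound for $t_\vr$ (so the interval $[1,t_\vr]$ or $[t_\vr,1]$ stays compact, uniformly in $\vr$), a uniform lower bound $t_\vr \ge c_0 > 0$, and a uniform lower bound on $\|t_\vr\phi_\vr^+\|_\vr$ and hence on $\int_M b\,|t_\vr\phi_\vr^+ + g_\vr(t_\vr\phi_\vr^+)|^p$, so that \eqref{eee2} produces a bound on $f_\vr'$ bounded away from zero. All three rely on combining the uniform embedding constants from Lemma \ref{embeding lemma}, the a priori bounds of Lemma \ref{a1}, and the coercivity \eqref{ga-vr geq tau}; once these are in place the implicit-function-type estimate is routine. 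A small technical care is also needed because $g_\vr$ is only $C^1$, so one should differentiate $f_\vr$ rather than $g_\vr$ directly and use that $t \mapsto I_\vr(t\phi_\vr^+)$ is $C^2$ as established in Lemma \ref{reduction2}.
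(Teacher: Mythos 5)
Your overall strategy is the paper's: you study the scalar function $f_\vr(t)=H_\vr(t\phi_\vr^+)$ (the paper calls it $\eta_\vr$), use \eqref{eee2} to make its derivative negative, and convert the smallness of $f_\vr(1)=I_\vr'(\phi_\vr^+)[\phi_\vr^+]$ into smallness of $|t_\vr-1|$ by a quantitative inverse--function argument. However, there is a genuine gap in the step where you produce the lower bound on $|f_\vr'|$. You evaluate \eqref{eee2} only at $u=t_\vr\phi_\vr^+$, where $H_\vr=0$ kills the first term on the right, and you obtain a bound on $f_\vr'$ \emph{at the single point} $t_\vr$. The mean value theorem then requires $\inf|f_\vr'|$ over the whole segment joining $1$ and $t_\vr$, and at an intermediate point $t$ the inequality \eqref{eee2} reads
\[
t f_\vr'(t)\leq 2H_\vr(t\phi_\vr^+)-\frac1{\vr^m}\frac{p-2}{p-1}\int_M b\,\bigl|t\phi_\vr^++g_\vr(t\phi_\vr^+)\bigr|^{p}d\vol_\ig ,
\]
in which the term $2H_\vr(t\phi_\vr^+)$ is positive on the side of $t_\vr$ containing small $t$ and is not controlled until one already knows that $t$ is close to $1$ or to $t_\vr$. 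Indeed $f_\vr(t)=t\,\frac{d}{dt}I_\vr(t\phi_\vr^+)$ increases near $t=0$ (Lemma \ref{reduction2}), so $f_\vr'$ is certainly not negative on all of $(0,t_\vr)$, and a priori the infimum of $|f_\vr'|$ over $[\min(1,t_\vr),\max(1,t_\vr)]$ could vanish. The argument is therefore circular: you need $t_\vr$ near $1$ to get the derivative bound on the connecting interval, and you need that derivative bound to prove $t_\vr$ is near $1$.

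The paper closes this loop by localizing at $t=1$ rather than at $t=t_\vr$: apply \eqref{eee2} at $u=t\phi_\vr^+$ for $t$ in a fixed neighborhood of $1$ and Taylor-expand, so that
\[
t\eta_\vr'(t)\leq 2\eta_\vr(1)-\frac1{\vr^m}\frac{p-2}{p-1}\int_M b\,\bigl|\phi_\vr^++g_\vr(\phi_\vr^+)\bigr|^{p}d\vol_\ig + C|t-1|,
\]
with $C$ independent of $\vr$. Since $\eta_\vr(1)\to0$ by Lemma \ref{a1}(3) and the $L^p$ term has a positive $\vr$-independent lower bound $c_0$ (this is \eqref{o1}, a consequence of \eqref{assumption0}; note it is stated at $\phi_\vr^+$, not at the unknown $t_\vr\phi_\vr^+$, so there is no circularity), one gets $\eta_\vr'\leq-\de$ on all of $(1-\de,1+\de)$ for some fixed $\de>0$ and all small $\vr$. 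This simultaneously forces $\eta_\vr$ to change sign across that interval, hence locates $t_\vr\in(1-\de,1+\de)$, and gives $|t_\vr-1|\leq c_\de|H_\vr(\phi_\vr^+)|=O(\|I_\vr'(\phi_\vr^+)\|_\vr)$. Your proof becomes correct once you replace the evaluation of \eqref{eee2} at $t_\vr$ by this Taylor expansion around $t=1$; the global existence-plus-uniqueness discussion of $t_\vr$ in your first paragraph then becomes unnecessary, since the sign change already produces the zero inside $(1-\de,1+\de)$.
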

\begin{proof}
We begin with the observation: due to the condition \eqref{assumption0}
and Lemma \ref{a1} $(3)$, there holds
\begin{\equ}\label{o1}
\liminf_{\vr\to0}\frac1{\vr^m}\int_M b \big| \phi_\vr^++g_\vr(\phi_\vr^+) \big|^{p}d\vol_\ig
\geq c_0
\end{\equ}
for some constant $c_0>0$. Let us set $\eta_\vr:(0,\infty)\to\R$ by
$\eta_\vr(t)=H_\vr(t\phi_\vr^+)$. One easily checks that
$t\eta_\vr'(t)=H_\vr'(t\phi_\vr^+)[t\phi_\vr^+]$ for all $t>0$. Hence,
by \eqref{eee2} and Taylor's formula, we get
\begin{\equ}\label{e2}
t\eta_\vr'(t)\leq 2 \eta_\vr(1)-\frac1{\vr^m}\frac{p-2}{p-1}\int_M
b \big| \phi_\vr^++g_\vr(\phi_\vr^+) \big|^{p}d\vol_\ig + C|t-1|
\end{\equ}
for $t$ close to $1$ with $C>0$ independent of $\vr$. Here we have used
the uniform boundedness of $\eta_\vr'(t)$ on bounded intervals.

Notice that $\eta_\vr(1)=I_\vr'(\phi_\vr^+)[\phi_\vr^+]\to0$ as $\vr\to0$,
we conclude from \eqref{o1} and \eqref{e2} that there exists a small constant $\de>0$ such that
\[
\eta_\vr'(t)\leq-\de \text{ for all } t\in(1-\de,1+\de) \text{ and } \vr \text{ small enough}.
\]
Moreover, from Lemma \ref{reduction2}, we have $\eta_\vr(1-\de)>0$ and
$\eta_\vr(1+\de)<0$. Then, by Inverse Function Theorem, $t_\vr:=\eta_\vr^{-1}(0)$
exists and
\[
u_\vr:=t_\vr\phi_\vr^+\in\msn_\vr\cap \span\{\phi_\vr^+\}
\]
is well-defined for all $\vr$ small enough. Furthermore, since $|\eta_\vr'(t)^{-1}|$
is bounded by a constant, say $c_\de>0$, on $(1-\de,1+\de)$, we consequently get
\[
\|u_\vr-\phi_\vr^+\|_\vr=
|\eta_\vr^{-1}(0)-\eta_\vr^{-1}(H_\vr(\phi_\vr^+))|\cdot\|\phi_\vr^+\|_\vr
\leq c_\de |H_\vr(\phi_\vr^+)|\cdot\|\phi_\vr^+\|_\vr.
\]
Now the conclusion follows from $H_\vr(\phi_\vr^+)\leq O\big(\|I_\vr'(\phi_\vr^+)\|_\vr\big)$.
\end{proof}

\begin{Cor}\label{key corollary}
For the sequence $\{\phi_\vr\}$ in \eqref{assumption0}, there exists
$\{u_\vr\}$ such that $u_\vr\in\msn_\vr$ and
$\|\phi_\vr-u_\vr-g_\vr(u_\vr)\|_\vr\leq O(\|\cl_\vr'(\phi_\vr)\|_\vr)$.
Particularly,
\[
\max_{t>0}I_\vr(t\phi_\vr^+)=I_\vr(u_\vr)\leq \cl_\vr(\phi_\vr)
+O\big( \|\cl_\vr'(\phi_\vr)\|_\vr^2 \big).
\]
\end{Cor}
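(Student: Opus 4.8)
The plan is to take as $u_\vr$ the element produced by Proposition~\ref{estimate prop}, that is, $u_\vr=t_\vr\phi_\vr^+\in\msn_\vr$, and to prove the two assertions in turn: the $\ch$-estimate $\|\phi_\vr-u_\vr-g_\vr(u_\vr)\|_\vr\le O(\|\cl_\vr'(\phi_\vr)\|_\vr)$ by splitting the left-hand side into three pieces, each controlled by $\|\cl_\vr'(\phi_\vr)\|_\vr$; and then the ``in particular'' line by identifying $I_\vr(u_\vr)$ with $\max_{t>0}I_\vr(t\phi_\vr^+)$ through Lemma~\ref{reduction2} and by a second-order Taylor expansion of $\cl_\vr$ about $\phi_\vr$.

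First I would collect the routine facts: $\|\phi_\vr\|_\vr$ is bounded uniformly in $\vr$ by Lemma~\ref{a1}(1); from $I_\vr'(\phi_\vr^+)=\cl_\vr'\big(\phi_\vr^++g_\vr(\phi_\vr^+)\big)$, Lemma~\ref{a1}(2) and the $C^2$ smoothness of $\cl_\vr$ one has $\|I_\vr'(\phi_\vr^+)\|_\vr\le O(\|\cl_\vr'(\phi_\vr)\|_\vr)$; and then Proposition~\ref{estimate prop} yields $u_\vr=t_\vr\phi_\vr^+\in\msn_\vr$ with $t_\vr>0$ for small $\vr$ and $|t_\vr-1|\le O(\|\cl_\vr'(\phi_\vr)\|_\vr)$. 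Writing
\[
\phi_\vr-u_\vr-g_\vr(u_\vr)=(1-t_\vr)\phi_\vr^++\big(\phi_\vr^--g_\vr(\phi_\vr^+)\big)+\big(g_\vr(\phi_\vr^+)-g_\vr(u_\vr)\big),
\]
the first term has norm $|1-t_\vr|\,\|\phi_\vr^+\|_\vr=O(\|\cl_\vr'(\phi_\vr)\|_\vr)$ (using $\|\phi_\vr^+\|_\vr\le\|\phi_\vr\|_\vr$), the second is $O(\|\cl_\vr'(\phi_\vr)\|_\vr)$ by Lemma~\ref{a1}(2), and the third needs a Lipschitz bound for $g_\vr$ on bounded subsets of $\ch_\vr^+$ that is uniform in small $\vr$. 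To get the latter I would differentiate $\cl_\vr'(u+g_\vr(u))|_{\ch_\vr^-}\equiv0$ in a direction $h\in\ch_\vr^+$ and test against $g_\vr'(u)h\in\ch_\vr^-$; using $\cl_\vr''(\psi)[v,v]=-\|v\|_\vr^2-\Psi_\vr''(\psi)[v,v]$ for $v\in\ch_\vr^-$, $\cl_\vr''(\psi)[h,v]=-\Psi_\vr''(\psi)[h,v]$ for $h\in\ch_\vr^+,\,v\in\ch_\vr^-$, and the convexity of $\Psi_\vr$, one obtains $\|g_\vr'(u)h\|_\vr^2\le|\Psi_\vr''(u+g_\vr(u))[h,g_\vr'(u)h]|$, and bounding $\Psi_\vr''$ by $L^p$-norms via H\"older together with Lemma~\ref{embeding lemma}, and $\|g_\vr(u)\|_\vr$ on bounded sets via Proposition~\ref{reduction1}(1), gives $\|g_\vr'(u)\|\le C$ on $\{\|u\|_\vr\le R\}$ with $C$ independent of $\vr$. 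Thus the third term is $\le C\,\|(1-t_\vr)\phi_\vr^+\|_\vr=O(\|\cl_\vr'(\phi_\vr)\|_\vr)$, and summing proves the $\ch$-estimate.

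For the second assertion, $u_\vr=t_\vr\phi_\vr^+\in\msn_\vr$ means exactly that $t_\vr$ is a positive zero of $t\mapsto I_{\vr,\phi_\vr^+}'(t)$; by Lemma~\ref{reduction2} this zero is unique on $(0,\infty)$ and $I_{\vr,\phi_\vr^+}''(t_\vr)<0$, while $I_{\vr,\phi_\vr^+}'(0)=0$ and $I_{\vr,\phi_\vr^+}''(0)>0$ force $I_{\vr,\phi_\vr^+}$ to be increasing on $(0,t_\vr)$ and decreasing on $(t_\vr,\infty)$; hence $\max_{t>0}I_\vr(t\phi_\vr^+)=I_\vr(u_\vr)=\cl_\vr\big(u_\vr+g_\vr(u_\vr)\big)$. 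Setting $z_\vr=u_\vr+g_\vr(u_\vr)$, so that $\|z_\vr-\phi_\vr\|_\vr=O(\|\cl_\vr'(\phi_\vr)\|_\vr)$ by the first assertion, a second-order Taylor expansion along $[\phi_\vr,z_\vr]$ gives
\[
\cl_\vr(z_\vr)=\cl_\vr(\phi_\vr)+\cl_\vr'(\phi_\vr)[z_\vr-\phi_\vr]+\tfrac12\cl_\vr''(\xi_\vr)[z_\vr-\phi_\vr,z_\vr-\phi_\vr]
\]
for some $\xi_\vr\in[\phi_\vr,z_\vr]$; the linear term is $\le\|\cl_\vr'(\phi_\vr)\|_\vr\|z_\vr-\phi_\vr\|_\vr=O(\|\cl_\vr'(\phi_\vr)\|_\vr^2)$, and since $\cl_\vr''(\psi)[\eta,\eta]=\|\eta^+\|_\vr^2-\|\eta^-\|_\vr^2-\Psi_\vr''(\psi)[\eta,\eta]\le\|\eta\|_\vr^2$ for all $\psi$ (convexity of $\Psi_\vr$), the quadratic term is $\le\tfrac12\|z_\vr-\phi_\vr\|_\vr^2=O(\|\cl_\vr'(\phi_\vr)\|_\vr^2)$. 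Therefore $I_\vr(u_\vr)=\cl_\vr(z_\vr)\le\cl_\vr(\phi_\vr)+O(\|\cl_\vr'(\phi_\vr)\|_\vr^2)$.

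The step I expect to be the main obstacle is the uniform (in small $\vr$) Lipschitz bound on $g_\vr$ over bounded sets: it is the only place that cannot be quoted directly from the earlier statements and requires going back into the Lyapunov--Schmidt reduction itself (as in \cite{BJS}). By contrast the Taylor step is robust, because the only genuinely negative contribution $-\Psi_\vr''(\psi)$ to $\cl_\vr''$ has a favourable sign, so the crude bound $\cl_\vr''(\psi)[\eta,\eta]\le\|\eta\|_\vr^2$ suffices and no control of $\cl_\vr''$ along $[\phi_\vr,z_\vr]$ is needed.
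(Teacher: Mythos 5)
Your argument is correct, and for the first assertion it is the same as the paper's: the same choice $u_\vr=t_\vr\phi_\vr^+$ from Proposition~\ref{estimate prop}, the same three-term decomposition of $\phi_\vr-u_\vr-g_\vr(u_\vr)$, with the middle term from Lemma~\ref{a1}(2) and the last from a mean-value bound on $g_\vr$. The one place you add genuine content is the uniform bound on $\|g_\vr'\|$ over bounded sets: the paper dismisses this as ``an easily checked inequality'' $\|g_\vr(\phi_\vr^+)-g_\vr(u_\vr)\|_\vr\leq\|g_\vr'(\tau\phi_\vr^+)\|\cdot\|\phi_\vr^+-u_\vr\|_\vr$, whereas you actually derive the bound by differentiating $\cl_\vr'(u+g_\vr(u))\big|_{\ch_\vr^-}\equiv0$, testing against $g_\vr'(u)h$, and exploiting $\Psi_\vr''\geq0$ together with Lemma~\ref{embeding lemma}; this is exactly the right way to make the step $\vr$-uniform and is worth recording. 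For the ``in particular'' line the two proofs diverge slightly: the paper Taylor-expands $\cl_\vr$ about $u_\vr+g_\vr(u_\vr)$, so that the linear term $I_\vr'(u_\vr)[\phi_\vr^+-u_\vr]$ vanishes identically because $u_\vr\in\msn_\vr$ and $\phi_\vr^+-u_\vr$ is parallel to $u_\vr$, and the conclusion comes out as an equality up to $O(\|\cl_\vr'(\phi_\vr)\|_\vr^2)$; you expand about $\phi_\vr$ instead, absorb the linear term into $O(\|\cl_\vr'(\phi_\vr)\|_\vr^2)$ directly, and control the remainder with the one-sided bound $\cl_\vr''(\psi)[\eta,\eta]\leq\|\eta\|_\vr^2$ coming from the convexity of $\Psi_\vr$. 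Your variant yields only the stated inequality rather than the two-sided estimate, but it is what the corollary asserts, and it has the small advantage of not requiring any two-sided control of $\Psi_\vr''$ along the segment.
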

\begin{proof}
To see this, let $u_\vr=t_\vr\phi_\vr^+$ be as in Proposition \ref{estimate prop}
and set $z_\vr=\phi_\vr^++g_\vr(\phi_\vr^+)$. Then one obtains from Lemma \ref{a1} that
\begin{\equ}\label{e3}
\aligned
\|\phi_\vr-u_\vr-g_\vr(u_\vr)\|_\vr&\leq \|\phi_\vr-z_\vr\|_\vr +
|t_\vr-1|\cdot\|\phi_\vr^+\|_\vr + \|g_\vr(\phi_\vr^+)-g_\vr(u_\vr)\|_\vr \\
&\leq O\big( \|\cl_\vr'(\phi_\vr)\|_\vr \big)+O\big( \|I_\vr'(\phi_\vr^+)\|_\vr \big)
\endaligned
\end{\equ}
where we have used an easily checked inequality
\[
\|g_\vr(\phi_\vr^+)-g_\vr(u_\vr)\|_\vr\leq \|g_\vr'(\tau \phi_\vr^+)\|_{\ch_\vr^+\to\ch_\vr^-}\cdot
\|\phi_\vr^+-u_\vr\|_\vr= O(|t_\vr-1|)
\]
for some $\tau$ between $t_\vr$ and $1$.
Remark that $I_\vr'(\phi_\vr^+)=\cl_\vr'(z_\vr)$, by using the $C^2$ smoothness
of $\cl_\vr$, we have
\[
\|I_\vr'(\phi_\vr^+)\|_\vr=\|\cl_\vr'(z_\vr)\|_\vr\leq \|\cl_\vr'(\phi_\vr)\|_\vr
+O(\|\phi_\vr-z_\vr\|_\vr)=O(\|\cl_\vr'(\phi_\vr)\|_\vr)
\]
This together with \eqref{e3} implies
\[
\|\phi_\vr-u_\vr-g_\vr(u_\vr)\|_\vr\leq O(\|\cl_\vr'(\phi_\vr)\|_\vr).
\]

Now, by Talyor's formula, we can obtain
\[
\aligned
\cl_\vr(\phi_\vr)&=\cl_\vr(u_\vr+g_\vr(u_\vr))+\cl_\vr'(u_\vr+g_\vr(u_\vr))[\phi_\vr-u_\vr-g_\vr(u_\vr)]
+O\big(\|\cl_\vr'(\phi_\vr)\|_\vr^2\big)  \\
&= I_\vr(u_\vr)+I_\vr'(u_\vr)[\phi_\vr^+-u_\vr]+O\big(\|\cl_\vr'(\phi_\vr)\|_\vr^2\big) .
\endaligned
\]
Notice that $u_\vr=t_\vr\phi_\vr^+\in\msn_\vr$, we have $I_\vr'(u_\vr)[\phi_\vr^+-u_\vr]\equiv0$
and this implies the last estimate.
\end{proof}

As a immediate consequence of Corollary \ref{key corollary}, we can show a explicit
upper bound of $\ga_\vr$ if we find some test spinors $\{\phi_\vr\}$ satisfying
\eqref{assumption0}.

\section{Energy gap for solutions in Euclidean spaces: the bubbles}\label{energy gap}

We consider solutions to the equation
\begin{\equ}\label{limit equ}
\tilde D_{\ig_{\R^m}}\psi + \nu \om_\C\cdot\psi = \ka |\psi|^{p-2}\psi
\quad \text{on } \R^m
\end{\equ}
belonging to the class $W^{\frac12,2}(\R^m,\tilde\mbs(\R^m))$, where $\nu,\ka>0$
are constants,
\[
\tilde\mbs(\R^m)=\left\{
\aligned
&\mbs(\R^m)\op\mbs(\R^m) &\quad &  m \text{ is odd}, \\
&\qquad \mbs(\R^m) &\quad & m \text{ is even},
\endaligned \right.
\]
$\tilde D_{\ig_{\R^m}}=D_{\ig_{\R^m}}\op -D_{\ig_{\R^m}}$
if $m$ is odd and $\tilde D_{\ig_{\R^m}}=D_{\ig_{\R^m}}$ if $m$ is even.
These solutions correspond to "bubbles" or test spinors for our variational problem.

First of all, let us denote $A_\nu=\tilde D_{\ig_{\R^m}}+\nu\om_\C$. By a straightforward
calculation we see that $A_\nu$ is a self-adjoint operator on $L^2$ and has its spectrum
$Spec(A_\nu)=(-\infty,-\nu]\cup[\nu,+\infty)$. Following Amann \cite{Amann}, denote
 $(E_\lm)_{\lm\in\R}$ the spectral resolution of $A_\nu$ and define the
orthogonal projections by
\[
P_\nu=\int_{-\infty}^0 d E_\lm, \quad  Q_\nu=\int_0^{\infty} d E_\lm.
\]
Then the decomposition of $\ce=W^{\frac12,2}(\R^m,\tilde\mbs(\R^m))=\ce_\nu^+\op\ce_\nu^-$ is induced by
\[
\ce_\nu^-=\ce\cap P_\nu (L^2) \quad \text{and} \quad \ce_\nu^+=\ce\cap Q_\nu (L^2).
\]
We can introduce the following operators
\[
S_\nu=\int_{-\infty}^0 |\lm|^{\frac12}d E_\lm \quad \text{and} \quad
T_\nu=\int_{0}^\infty |\lm|^{\frac12} d E_\lm.
\]
We may now introduce a new inner product on $\ce$ by the formula
\[
\inp{\psi}{\va}_\nu=\real\big( (S_\nu+T_\nu)\psi, (S_\nu+T_\nu)\va \big)_2, \quad
\psi,\va\in\ce
\]
and the corresponding norm $\|\cdot\|_\nu$. And we easily see that
\eqref{limit equ} is the Euler-Lagrange equation of the functional
\begin{\equ}\label{limit equ functional}
\Phi_{\nu\ka}(\psi)=\frac12\big(\|Q_\nu\psi\|_\nu^2-\|P_\nu\psi\|_\nu^2\big)
-\frac{\ka}{p}|\psi|_{p}^{p}.
\end{\equ}

\begin{Lem}\label{H5}
If $\{\psi_n\}\subset \ce$ is a bounded sequence such that
\[
\Phi_{\nu\ka}'(\psi_n)\to0 \quad \text{and} \quad
\liminf_{n\to\infty}|\psi_n|_{p}>0.
\]
Then there exists $\psi\neq0$ with $\Phi_{\nu\ka}'(\psi)=0$.
\end{Lem}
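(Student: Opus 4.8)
The plan is to establish a non-vanishing lemma for Palais--Smale sequences of the limiting functional $\Phi_{\nu\ka}$ on $\ce=W^{\frac12,2}(\R^m,\tilde\mbs(\R^m))$, in the spirit of P.-L. Lions' concentration-compactness principle adapted to the strongly indefinite spinorial setting. First I would record that $\{\psi_n\}$, being bounded in $\ce$, has a weakly convergent subsequence, say $\psi_n\rightharpoonup\psi$. The obvious difficulty is that, on the whole space $\R^m$, the embedding $\ce\hookrightarrow L^p$ is \emph{not} compact (unlike the closed-manifold case in Lemma \ref{embeding lemma}), so weak convergence alone cannot guarantee $\psi\neq0$ nor that $\psi$ solves the equation. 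Since $\Phi_{\nu\ka}'(\psi_n)\to0$, a routine passage to the weak limit shows $\Phi_{\nu\ka}'(\psi)=0$, so the heart of the matter is to rule out $\psi=0$.

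To do so I would argue by contradiction: assume every translate of $\{\psi_n\}$ converges weakly to $0$, i.e. for each $x_n\in\R^m$ one has $\psi_n(\cdot+x_n)\rightharpoonup 0$. A vanishing lemma of Lions type (in the $W^{\frac12,2}$ version, see e.g. the fractional Sobolev analogue of Lemma I.1 in Lions' paper, or the treatment in Amann's framework) then yields $|\psi_n|_p\to0$ for $p\in(2,m^*)$, contradicting $\liminf_n|\psi_n|_p>0$. Hence there is a sequence $\{x_n\}\subset\R^m$ and $\de>0$ such that, writing $\tilde\psi_n:=\psi_n(\cdot+x_n)$, one has $\liminf_n\int_{B_1(0)}|\tilde\psi_n|^2\,dx\geq\de>0$. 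By translation invariance of $\tilde D_{\ig_{\R^m}}$, $\om_\C$, and the $L^p$ norm under the Euclidean metric, the shifted sequence $\{\tilde\psi_n\}$ is still bounded in $\ce$ and still satisfies $\Phi_{\nu\ka}'(\tilde\psi_n)\to0$. Extracting a further subsequence, $\tilde\psi_n\rightharpoonup\tilde\psi$ in $\ce$, and the local compactness of $\ce\hookrightarrow L^2_{\mathrm{loc}}$ (Rellich on bounded domains, valid for $W^{\frac12,2}$) upgrades this to strong $L^2(B_1(0))$ convergence, so $\int_{B_1(0)}|\tilde\psi|^2\,dx\geq\de$ and in particular $\tilde\psi\neq0$.

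It remains to show $\tilde\psi$ is a critical point of $\Phi_{\nu\ka}$. For any test spinor $\va\in\Ga_c(\tilde\mbs(\R^m))$ with support in a fixed ball, the linear part $\real\inp{\tilde\psi_n}{\va}_\nu$ passes to the limit by weak convergence, while the nonlinear term $\real\int b|\tilde\psi_n|^{p-2}(\tilde\psi_n,\va)\,dx$ converges by the strong $L^p_{\mathrm{loc}}$ convergence (obtained by interpolating the $L^2_{\mathrm{loc}}$ strong convergence against the uniform $L^{m^*}$ bound, exactly as in the proof of Lemma \ref{embeding lemma}); one uses here the standard fact that $|\tilde\psi_n|^{p-2}\tilde\psi_n\to|\tilde\psi|^{p-2}\tilde\psi$ in $L^{p'}_{\mathrm{loc}}$. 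Passing to the limit in $\Phi_{\nu\ka}'(\tilde\psi_n)[\va]=o(1)$ gives $\Phi_{\nu\ka}'(\tilde\psi)[\va]=0$ for all such $\va$, and density of $\Ga_c$ in $\ce$ yields $\Phi_{\nu\ka}'(\tilde\psi)=0$. Since $\tilde\psi\neq0$, setting $\psi:=\tilde\psi$ completes the proof.

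The main obstacle, as indicated, is precisely the loss of compactness on $\R^m$: one cannot conclude directly from $\psi_n\rightharpoonup\psi$ that $\psi\neq0$, and the resolution is the translation (concentration-compactness) argument together with the Lions vanishing lemma in $W^{\frac12,2}$. A minor technical point worth care is verifying that the spectral decomposition $\ce=\ce_\nu^+\op\ce_\nu^-$ and the norm $\|\cdot\|_\nu$ are translation-invariant, which follows since $A_\nu$ commutes with translations; this ensures the shifted sequence $\{\tilde\psi_n\}$ inherits all the hypotheses.
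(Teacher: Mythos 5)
Your proposal is correct and follows essentially the same route as the paper: the Lions vanishing/non-vanishing dichotomy, translation using the invariance of $A_\nu$, non-triviality of the weak limit via the compact embedding $\ce\hookrightarrow L^2_{\mathrm{loc}}$, and passage to the limit in $\Phi_{\nu\ka}'(\tilde\psi_n)[\va]\to0$ using weak convergence of $|\tilde\psi_n|^{p-2}\tilde\psi_n$ in $L^{p/(p-1)}$. The only differences are cosmetic (fixed radius $1$ versus a general $R$, and the explicit remark on translation invariance of the spectral splitting, which the paper leaves implicit).
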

\begin{proof}
Let $B_R^0$ denote the open ball of radius $R$ centered at the origin. If
\[
\lim_{n\to\infty}\sup_{y\in \R^m}\int_{y+B_R^0} |\psi_n|^2dx=0, \quad \forall R>0,
\]
then by  Lions' result \cite{Lions} $\psi_n\to0$ in $L^{q}$ for all $q\in(2,m^*)$
and therefore $|\psi_n|_p\to0$, which is a contradiction.

Passing to a subsequence, we have
\[
\liminf_{n\to\infty}\int_{y_n+B_R^0}|\psi_n|^2dx>0
\]
for some $R>0$ and $\{y_n\}\subset\R^m$. Using the invariance of the operator $A_\nu$
under translations, we can find $R>0$ and a new sequence $\{\tilde\psi_n\}$ such that
\[
\Phi_{\nu\ka}'(\tilde\psi_n)\to0 \quad \text{and} \quad
\liminf_{n\to\infty}\int_{B_R^0}|\tilde\psi_n|^2 dx>0.
\]
Up to a subsequence if necessary, we have $\tilde\psi_n\rightharpoonup \psi$
and the compact embedding $\ce\hookrightarrow L^2_{loc}$ shows that
$\psi\neq0$. Note that $|\tilde\psi_n|^{p-2}\tilde\psi_n\rightharpoonup |\psi|^{p-2}\psi$
in $L^{\frac{p}{p-1}}$, by taking the limit in $\Phi_{\nu\ka}'(\tilde\psi_n)\to0$, we obtain
$\Phi_{\nu\ka}'(\psi)=0$ as desired.
\end{proof}

\begin{Cor}
For each $\nu,\ka>0$, there exists a nontrivial solution $\psi\in\ce$ to Eq. \eqref{limit equ}.
\end{Cor}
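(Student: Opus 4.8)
The plan is to produce a bounded Palais--Smale sequence for $\Phi_{\nu\ka}$ whose $L^p$-norms are bounded away from $0$, and then to invoke Lemma \ref{H5}. Since $\Phi_{\nu\ka}$ in \eqref{limit equ functional} is strongly indefinite, the first step I would take is a Lyapunov--Schmidt reduction along the lines of Proposition \ref{reduction1}: as $\psi\mapsto\frac{\ka}{p}|\psi|_p^p$ is strictly convex, of class $C^2$, and $\Phi_{\nu\ka}(u+\cdot)$ is anticoercive on $\ce_\nu^-$ (the quadratic term $-\frac12\|\cdot\|_\nu^2$ dominates there), for each $u\in\ce_\nu^+$ there is a unique $g(u)\in\ce_\nu^-$ maximizing $w\mapsto\Phi_{\nu\ka}(u+w)$ over $\ce_\nu^-$. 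The implicit function theorem gives $g\in C^1(\ce_\nu^+,\ce_\nu^-)$, one has $\Phi_{\nu\ka}'(u+g(u))\big|_{\ce_\nu^-}\equiv0$, and exactly as in Proposition \ref{reduction1}(1) the a priori bound $\|g(u)\|_\nu^2\le\frac{2\ka}{p}|u|_p^p$. I would then study the reduced functional $I(u):=\Phi_{\nu\ka}(u+g(u))$ on $\ce_\nu^+$; since $Q_\nu(u+g(u))=u$ and $P_\nu(u+g(u))=g(u)$, and $\Phi_{\nu\ka}'(u+g(u))$ kills $\ce_\nu^-$, one checks (as in Proposition \ref{reduction1}(2)) that $\|\Phi_{\nu\ka}'(u+g(u))\|_{\ce^*}=\|I'(u)\|_{(\ce_\nu^+)^*}$ for the norm induced by $\|\cdot\|_\nu$, so a Palais--Smale sequence for $I$ pushes down to one for $\Phi_{\nu\ka}$.

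Next I would verify that $I$ has the mountain-pass geometry. Using the continuous embedding $\ce\hookrightarrow L^p$ (valid because $2<p<m^*=\frac{2m}{m-1}$) one has $I(u)\ge\Phi_{\nu\ka}(u)=\frac12\|u\|_\nu^2-\frac{\ka}{p}|u|_p^p\ge\frac12\|u\|_\nu^2-c\,\|u\|_\nu^p$, hence $I\ge\alpha>0$ on a small sphere $\{\|u\|_\nu=\rho\}$, while $I(0)=0$ and, by the same computations as in \cite[Section~2]{BJS}, $I(t_0u_0)<0$ for a suitable $u_0\in\ce_\nu^+\setminus\{0\}$ and large $t_0$ (the superquadratic term dominates along rays). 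The deformation form of the mountain-pass theorem then produces $\{u_n\}\subset\ce_\nu^+$ with $I(u_n)\to c_{\mathrm{MP}}\ge\alpha>0$ and $I'(u_n)\to0$, with no compactness required at this stage; one could equivalently replace this step by the Nehari-type characterization in \eqref{ga-vr}.

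Finally I would set $\psi_n:=u_n+g(u_n)$, so that $\Phi_{\nu\ka}(\psi_n)\to c_{\mathrm{MP}}$ and $\Phi_{\nu\ka}'(\psi_n)\to0$. Combining $\Phi_{\nu\ka}(\psi_n)\to c_{\mathrm{MP}}$ with $\Phi_{\nu\ka}'(\psi_n)[\psi_n]=o(\|\psi_n\|_\nu)$, and using $Q_\nu\psi_n=u_n$, $P_\nu\psi_n=g(u_n)$, the bound $\|g(u_n)\|_\nu^2\le\frac{2\ka}{p}|u_n|_p^p$, and the embedding $\ce\hookrightarrow L^p$ together with $2<p<m^*$, one shows that $\|\psi_n\|_\nu$ stays bounded and that $\ka\frac{p-2}{p}|\psi_n|_p^p\to 2c_{\mathrm{MP}}$; in particular $\liminf_n|\psi_n|_p>0$ since $c_{\mathrm{MP}}>0$. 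At this point $\{\psi_n\}$ satisfies all the hypotheses of Lemma \ref{H5}, which delivers $\psi\neq0$ with $\Phi_{\nu\ka}'(\psi)=0$, i.e. a nontrivial solution of \eqref{limit equ} in $\ce$.

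The main obstacle I anticipate is the boundedness of the Palais--Smale sequence: on $\R^m$ there is no Palais--Smale condition (this is precisely what Lemma \ref{H5} is designed to bypass), so one cannot argue as in Lemma \ref{PS-condition}, and the boundedness has to be extracted from the reduction structure and the strict subcriticality $p<m^*$ as indicated above. Everything else — the existence and $C^1$-regularity of $g$, the mountain-pass geometry, and the passage from a critical sequence of $I$ to one of $\Phi_{\nu\ka}$ — is routine and parallels the compact case treated in Proposition \ref{reduction1}.
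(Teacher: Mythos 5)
Your argument is correct and is essentially the paper's own proof: the paper simply invokes Lemma \ref{H5} together with \cite[Theorem 2.1]{BJS}, and the reduction $g:\ce_\nu^+\to\ce_\nu^-$, the mountain-pass geometry of the reduced functional, and the extraction of a bounded Palais--Smale sequence with $\liminf_n|\psi_n|_p>0$ are exactly what that cited theorem supplies. You have merely unpacked the citation, so there is nothing to add.
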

\begin{proof}
By Lemma \ref{H5}, this is a direct consequence of \cite[Theorem 2.1]{BJS}.
\end{proof}

Now we may define
\[
\ga(\nu,\ka)=\inf\big\{ \Phi_{\nu\ka}(\psi): \, \psi\in\ce\setminus\{0\} \text{ s.t. }
\Phi_{\nu\ka}'(\psi)=0 \big\}.
\]
Because the super-quadratic part in \eqref{limit equ functional} is simply the $L^{p}$ norm,
we easily see that $\ga(\nu,\ka)>0$ is attained. Particularly, by \cite{BJS} and a similar argument as
of Lemma \ref{reduction2}, the following reduction principle holds.
\begin{Lem}\label{reduction limit equ}
For each $\nu,\ka>0$,
\begin{itemize}
\item[$(1)$] there exists a $C^1$ map $h_{\nu\ka}:\ce_\nu^+\to\ce_\nu^-$ such that
$\displaystyle\Phi_{\nu\ka}(u+h_{\nu\ka}(u))=\max_{v\in\ce_\nu^-}\Phi_{\nu\ka}(u+v)$;

\item[$(2)$] denoted by $J_{\nu\ka}(u)=\Phi_{\nu\ka}(u+h_{\nu\ka}(u))$, then critical
points of $J_{\nu\ka}$ and $\Phi_{\nu\ka}$ are in one-to-one correspondence via the injective
map $u\mapsto u+h_{\nu\ka}(u)$;

\item[$(3)$] for each $u\in\ce_\nu^+\setminus\{0\}$, the map $t\mapsto J_{\nu\ka}(tu)$
has only one maximum on $(0,+\infty)$ and
$\displaystyle \ga(\nu,\ka)=\inf_{u\in\ce_\nu^+\setminus\{0\}}
\max_{t>0}J_{\nu\ka}(tu)$.
\end{itemize}
\end{Lem}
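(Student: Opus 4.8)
The plan is to transcribe, in the Euclidean setting, the reduction already carried out for the compact model problem \eqref{equ3} in Proposition~\ref{reduction1} and Lemma~\ref{reduction2}, the only substitutions being that the spectral decomposition of $A$ is replaced by that of $A_\nu$ and that $\R^m$ is non-compact; all the structure one needs is in place, namely a quadratic form positive definite on $\ce_\nu^+$ and negative definite on $\ce_\nu^-$ and a strictly convex, $C^2$, superquadratic term $\frac{\ka}{p}|\psi|_p^p$ with $2<p<m^*$, so that $\Phi_{\nu\ka}\in C^2(\ce,\R)$. For part~(1), fix $u\in\ce_\nu^+$ and consider $v\mapsto\Phi_{\nu\ka}(u+v)=\tfrac12\|u\|_\nu^2-\tfrac12\|v\|_\nu^2-\tfrac{\ka}{p}|u+v|_p^p$ on $\ce_\nu^-$: it is strictly concave (the norm term is strictly concave and $v\mapsto|u+v|_p^p$ convex), weakly upper semicontinuous (only weak lower semicontinuity of the $L^p$-norm on $\ce$ is used — compactness of the embedding is irrelevant here), and coercive because $-\tfrac{\ka}{p}|u+v|_p^p\le0$ forces $\Phi_{\nu\ka}(u+v)\le\tfrac12\|u\|_\nu^2-\tfrac12\|v\|_\nu^2\to-\infty$. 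Hence it has a unique maximizer $h_{\nu\ka}(u)\in\ce_\nu^-$, characterized by $\Phi_{\nu\ka}'(u+h_{\nu\ka}(u))|_{\ce_\nu^-}=0$, and testing against $v=0$ gives $\tfrac12\|h_{\nu\ka}(u)\|_\nu^2\le\tfrac{\ka}{p}|u|_p^p$. The $C^1$ regularity of $h_{\nu\ka}$ follows from the implicit function theorem applied to $G(u,v)=\Phi_{\nu\ka}'(u+v)|_{\ce_\nu^-}$, which is $C^1$ since $\Phi_{\nu\ka}\in C^2$ and whose partial differential in $v$ is $-\id_{\ce_\nu^-}$ minus the positive semidefinite Hessian of the $L^p$-term, hence invertible; this is exactly the argument of \cite{BJS}.

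Part~(2) is the usual saddle-point correspondence. For $w\in\ce_\nu^+$ one has $J_{\nu\ka}'(u)[w]=\Phi_{\nu\ka}'(u+h_{\nu\ka}(u))[w+h_{\nu\ka}'(u)[w]]=\Phi_{\nu\ka}'(u+h_{\nu\ka}(u))[w]$, because $h_{\nu\ka}'(u)[w]\in\ce_\nu^-$ and $\Phi_{\nu\ka}'(u+h_{\nu\ka}(u))$ annihilates $\ce_\nu^-$ by construction; thus $J_{\nu\ka}'(u)=0$ iff $\Phi_{\nu\ka}'(u+h_{\nu\ka}(u))=0$. Conversely, if $\Phi_{\nu\ka}'(\psi)=0$ then $\psi^-$ maximizes $v\mapsto\Phi_{\nu\ka}(\psi^++v)$, so $\psi^-=h_{\nu\ka}(\psi^+)$ by uniqueness and $J_{\nu\ka}'(\psi^+)=0$; the map $u\mapsto u+h_{\nu\ka}(u)$ is injective since its $\ce_\nu^+$-component is $u$, which proves (2).

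For part~(3), fix $u\in\ce_\nu^+\setminus\{0\}$ and set $\al_u(t)=J_{\nu\ka}(tu)$, $t>0$. Repeating the computation \eqref{eee1}--\eqref{eee2} with $A_\nu$ in place of $A$ yields, for $v\in\ce_\nu^+$ and $\chi=h_{\nu\ka}'(v)[v]-h_{\nu\ka}(v)\in\ce_\nu^-$,
\begin{equation*}
J_{\nu\ka}''(v)[v,v]\le J_{\nu\ka}'(v)[v]-\ka\,\frac{p-2}{p-1}\int_{\R^m}\bigl|v+h_{\nu\ka}(v)\bigr|^{p}\,dx-\|\chi\|_\nu^2 ,
\end{equation*}
so $J_{\nu\ka}'(v)[v]=0$ with $v\neq0$ forces $J_{\nu\ka}''(v)[v,v]<0$. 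Since $\al_u'(t)=t^{-1}J_{\nu\ka}'(tu)[tu]$ and $\al_u''(t)=t^{-2}J_{\nu\ka}''(tu)[tu,tu]$, this reads $\al_u'(t)=0\Rightarrow\al_u''(t)<0$; together with $\al_u(0)=\al_u'(0)=0$ and $\al_u''(0)=\|u\|_\nu^2>0$ (because $h_{\nu\ka}'(0)=0$, as $h_{\nu\ka}(v)=O(\|v\|_\nu^{p-1})$ near $0$) this shows $\al_u$ increases off $0$, its first critical point $t_u>0$ is a nondegenerate local maximum, no further critical point can occur (the next one would be a local minimum, contradicting the implication), and $\al_u$ strictly decreases on $(t_u,\infty)$; hence $t_u$ is the unique critical point and $\al_u(t_u)=\max_{t>0}\al_u(t)$ is attained, exactly as in the compact case and in \cite{BJS,Szulkin-Weth:2010}.

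It remains to establish $\ga(\nu,\ka)=\inf_{u\in\ce_\nu^+\setminus\{0\}}\max_{t>0}J_{\nu\ka}(tu)$. Any nontrivial critical point $\psi$ of $\Phi_{\nu\ka}$ has $\psi^+\neq0$ (otherwise $\psi=h_{\nu\ka}(0)=0$), and by (2) and the previous paragraph $\Phi_{\nu\ka}(\psi)=J_{\nu\ka}(\psi^+)=\max_{t>0}J_{\nu\ka}(t\psi^+)$, which gives ``$\ge$''; conversely the min--max value on the right is a critical value of $J_{\nu\ka}$ — this is the Mountain Pass conclusion of \cite[Theorem~2.1]{BJS} already invoked in the Corollary above — realized at some $u_0$, and then $u_0+h_{\nu\ka}(u_0)$ is a critical point of $\Phi_{\nu\ka}$ at that level, giving ``$\le$''. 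The only genuinely delicate point, and the one I expect to be the main obstacle, is Step~(1): one must make sure the concave maximization over the unbounded manifold $\R^m$ still produces a well-defined, $C^1$ map, i.e.\ that coercivity and weak upper semicontinuity survive the loss of compactness of $\ce\hookrightarrow L^p$. As indicated, this works because the $L^p$-term enters with a favorable sign and only its weak lower semicontinuity — not compactness — is used; everything else is a verbatim transcription of the arguments already developed for \eqref{equ3} and of \cite{BJS}.
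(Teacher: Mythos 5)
Your proposal is correct and follows essentially the same route as the paper, which disposes of this lemma in one line by citing \cite{BJS} and the analogue of Lemma \ref{reduction2}; your strict-concavity/implicit-function-theorem construction of $h_{\nu\ka}$, the saddle-point correspondence, and the transcription of \eqref{eee1}--\eqref{eee2} are exactly the intended arguments. The only step you take for granted is that $t\mapsto J_{\nu\ka}(tu)$ actually possesses a critical point (equivalently that $J_{\nu\ka}(tu)\to-\infty$ as $t\to\infty$, which in the non-compact setting uses the $L^p$-boundedness of the spectral projections $P_\nu,Q_\nu$ to bound $|tu|_p\le C|tu+h_{\nu\ka}(tu)|_p$), but the paper leaves this to \cite{BJS} as well.
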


In the next step, we will study the behavior of the map $(\nu,\ka)\mapsto\ga(\nu,\ka)$. Specifically,
the monotonicity of $\ga$ with respect to the two parameters is at the core of this paper.

\begin{Prop}\label{euclidean prop}
$\ga(\nu,\ka)=\nu^{-(m-1)+\frac2{p-2}}\,\ka^{-\frac2{p-2}}\,\ga(1,1)$.
%
\end{Prop}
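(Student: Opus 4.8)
The plan is to exploit the scaling invariance of the Euclidean equation \eqref{limit equ} directly and transfer it to the min-max characterization of $\ga(\nu,\ka)$ provided by Lemma \ref{reduction limit equ}$(3)$. First I would look for a change of variables on $\R^m$ that turns a solution of \eqref{limit equ} for one pair of parameters into a solution for another. Write $\psi_{\nu,\ka}(x)=\mu\,\psi_{1,1}(\nu x)$ for constants $\mu>0$ to be determined. Using $\tilde D_{\ig_{\R^m}}\big(\psi(\nu\,\cdot)\big)(x)=\nu\,(\tilde D_{\ig_{\R^m}}\psi)(\nu x)$ and $\om_\C\cdot\big(\psi(\nu\,\cdot)\big)(x)=(\om_\C\cdot\psi)(\nu x)$, one checks that $\psi_{\nu,\ka}$ solves $\tilde D_{\ig_{\R^m}}\psi+\nu\om_\C\cdot\psi=\ka|\psi|^{p-2}\psi$ precisely when the nonlinear coefficients match, i.e. when $\nu\mu=\ka\mu^{p-1}$, giving $\mu=(\nu/\ka)^{1/(p-2)}$. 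Thus the map $\psi\mapsto (\nu/\ka)^{1/(p-2)}\psi(\nu\,\cdot)$ is a bijection between the nontrivial critical points of $\Phi_{1,1}$ and those of $\Phi_{\nu,\ka}$.

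Next I would track how the functional value transforms under this substitution. Since any critical point $\psi$ of $\Phi_{\nu\ka}$ satisfies the Pohozaev/Nehari-type identity making $\Phi_{\nu\ka}(\psi)$ proportional to $\big(\tfrac12-\tfrac1p\big)\ka|\psi|_p^p$ (this follows by pairing the equation with $\psi$, exactly as in the computation leading to \eqref{eee1}), it suffices to compute how $\ka|\psi|_p^p$ scales. With $\psi_{\nu,\ka}(x)=\mu\,\psi_{1,1}(\nu x)$ and $\mu=(\nu/\ka)^{1/(p-2)}$ we get $\ka\,|\psi_{\nu,\ka}|_p^p=\ka\,\mu^p\,\nu^{-m}\,|\psi_{1,1}|_p^p=\nu^{-m}\,\ka\,(\nu/\ka)^{p/(p-2)}\,|\psi_{1,1}|_p^p$, and collecting exponents of $\nu$ and $\ka$ gives the prefactor $\nu^{\,p/(p-2)-m}\,\ka^{\,1-p/(p-2)}=\nu^{-(m-1)+\frac2{p-2}}\,\ka^{-\frac2{p-2}}$, since $\tfrac{p}{p-2}-m=-(m-1)+\tfrac{2}{p-2}$ and $1-\tfrac{p}{p-2}=-\tfrac{2}{p-2}$. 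Alternatively, to avoid invoking a Pohozaev identity, I would run the argument entirely through Lemma \ref{reduction limit equ}$(3)$: the substitution is a linear isomorphism $\ce\to\ce$ that intertwines the sign decompositions $\ce_\nu^\pm$ (because $A_\nu$ is conjugate to $\nu A_1$ under $u\mapsto u(\nu\,\cdot)$, hence has the same spectral sign projections up to the positive dilation), so it carries $\ce_1^+\setminus\{0\}$ onto $\ce_\nu^+\setminus\{0\}$; one then computes that $\Phi_{\nu\ka}\big((\nu/\ka)^{1/(p-2)}u(\nu\,\cdot)\big)=\nu^{-(m-1)+\frac2{p-2}}\ka^{-\frac2{p-2}}\,\Phi_{1,1}(u)$ up to the same $t$-rescaling absorbed by $\max_{t>0}$, and takes the infimum over $u$.

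The one genuinely delicate point is verifying that the dilation $u\mapsto u(\nu\,\cdot)$ respects the splitting $\ce=\ce_\nu^+\op\ce_\nu^-$ in the precise sense needed, i.e. that it maps $\ce_1^\pm$ onto $\ce_\nu^\pm$; this is where I would be careful. It follows from the operator identity $A_\nu\circ R_\nu=\nu\,R_\nu\circ A_1$, where $R_\nu u:=u(\nu\,\cdot)$, which shows $R_\nu$ conjugates $A_1$ to $\nu^{-1}A_\nu$; since $\nu>0$, the positive (resp. negative) spectral subspace is preserved, and the quadratic form $\|Q_\nu\psi\|_\nu^2-\|P_\nu\psi\|_\nu^2$ transforms by the scalar $\nu^{1-m}$ coming from $\||\lm|^{1/2}\,\cdot\|_2^2$ under the dilation (the $\nu$ from the eigenvalue rescaling and the $\nu^{-m}$ from the change of variables in the $L^2$ norm). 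Combining this quadratic-form factor with the scaling of the $L^p$ term and rescaling $t$ so that the maximum over $t>0$ is unaffected, the claimed identity drops out; the final equality $\ga(\nu,\ka)=\nu^{-(m-1)+\frac2{p-2}}\ka^{-\frac2{p-2}}\ga(1,1)$ then follows by taking infima over $u\in\ce_1^+\setminus\{0\}$ on both sides.
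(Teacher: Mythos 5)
Your proposal is correct and is essentially the paper's own argument: the paper uses the inverse rescaling $\va(x)=\rho\psi(x/\nu)$ with $\rho=(\nu^{-1}\ka)^{1/(p-2)}$ to reduce to the case $(\nu,\ka)=(1,1)$, which is exactly your substitution $\psi\mapsto(\nu/\ka)^{1/(p-2)}\psi(\nu\,\cdot)$ read in the opposite direction, and the energy relation is likewise obtained from $\Phi_{\nu\ka}(\psi)=\bigl(\tfrac12-\tfrac1p\bigr)\ka|\psi|_p^p$ at critical points. The extra spectral-projection discussion in your alternative route is not needed for the paper's (and your primary) argument, since the definition of $\ga(\nu,\ka)$ as an infimum over all nontrivial critical points transfers directly under the bijection.
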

\begin{proof}
%
In fact, taking $\rho>0$ as a parameter, we can assert that:
$\psi$ is a nontrivial solution of Eq. \eqref{limit equ} with energy $\ga(\nu,\ka)$
if and only if $\va(x)=\rho\psi(x/\nu)$ solves
\[
\tilde D_{\ig_{\R^m}}\va + \om_\C\cdot\va = \frac{\nu^{-1}\ka}{\rho^{p-2}} |\va|^{p-2}\va \quad
\text{on } \R^m
\]
with the energy
\[
\ga\Big(1,\frac{\nu^{-1}\ka}{\rho^{p-2}}\Big)=\nu^{m-1}\, \rho^2\, \ga(\nu,\ka).
\]
Therefore, the conclusion follows easily by substituting $\rho=(\nu^{-1}\ka)^{\frac1{p-2}}$.
\end{proof}

\begin{Rem}
Since $p\in(2,m^*)$, we have $-(m-1)+\frac2{p-2}>0$ and the value
of $\ga(\nu,\ka)$ decreases as $\nu$ goes smaller and $\ka$ goes larger.
\end{Rem}


\section{Bubbling analysis}\label{Bubble analysis}

Let us recall that the coefficients $a, b$ in  the model problem Eq. \eqref{equ3} are
positive functions in the class $C^1(M)$. Motivated by Proposition \ref{euclidean prop},
we introduce a potential function $\al:M\to\R$ as
\[
\al=a^{-(m-1)+\frac2{p-2}}\,b^{-\frac2{p-2}},
\]
and we write $\displaystyle\al_{min}=\min_M\al$. The set of minimum
points of $\al$ will be denoted by
\begin{\equ}\label{cc}
\cc=\big\{ \xi\in M:\, \al(\xi)=\al_{min}\big\}.
\end{\equ}

Fix $\xi_0\in\cc$ arbitrarily, we can denote $\nu_0=a(\xi_0)$ and
$\ka_0=b(\xi_0)$. Then, by  Lemma \ref{reduction limit equ}, there exists
$\psi_0\in\ce$ such that
\begin{\equ}\label{test equ}
\tilde D_{\ig_{\R^m}}\psi_0+\nu_0\om_\C\cdot \psi_0=\ka_0 |\psi_0|^{p-2}\psi_0
\quad \text{on } \R^m
\end{\equ}
and
\begin{\equ}\label{test spinor energy}
\Phi_{\nu_0\ka_0}(\psi_0)=\ga(\nu_0,\ka_0)\equiv\al_{min}\ga(1,1).
\end{\equ}
Let $\eta\in C^\infty(\R^m)$ be such that $\eta(x)=1$ for $|x|\leq 1/2$
and $\eta(x)=0$ for $|x|\geq1$. We define a spinor $\psi_\vr\in \Ga(\tilde\mbs(\R^m))$
by
\[
\va_\vr(x)=\eta_\vr(x)\psi_0(x) \quad \text{where} \quad
\eta_\vr(x)=\eta(\vr^{\frac12}x).
\]

Suppose $\xi_0\in V\subset M$ and let $(x_1,\dots, x_m)$ be the normal coordinates
given by the exponential map $\exp_{\xi_0}: U\subset T_{\xi_0}M\cong \R^m\to V$,
$x\mapsto y=\exp_{\xi_0}x$. We define
\[
\mu_\vr(x)=\exp_{\xi_0}(\vr x)
\]
 such that $\vr|x|<inj_M$, where $inj_M>0$ is the injectivity radius of $M$.

Denoted by $B_R^0=\{x\in\R^m:\, |x|<R\}$, where $|\cdot|$ is the Euclidean norm
in $\R^m$, we have a conformal equivalence $(B_{\vr^{-1/2}}^0, \vr^{-2}\mu_\vr^*\ig)
\cong (B_{\vr^{1/2}}(\xi_0),\ig)\subset M$ for all $\vr$ small.

For ease of notation, we set $\ig_\vr=\vr^{-2}\mu_\vr^*\ig$. Writing the metric $\ig$
in geodesic normal coordinates centered at $\xi_0$, one immediately sees that
$\ig_\vr$ converges to the Euclidean metric in $C^\infty$-topology on $B_{\vr^{-1/2}}^0$.

We point out here that, by using the idea of Bourguignon-Gauduchon trivialization \cite{BG}
(see also \cite{AGHM}),
 the coordinate map $\mu_\vr$ induces a bundle identification
$\ov{(\mu_\vr)}_*: \mbs_x(B_{\vr^{-1/2}}^0, \ig_\vr)\to
\mbs_{\mu_\vr(x)}(B_{\vr^{1/2}}(\xi_0),\ig)$. Hence we can define spinors
on $B_{\vr^{1/2}}(\xi_0)$ by
\begin{\equ}\label{test spinor}
\phi_\vr:=\ov{(\mu_\vr)}_*\circ \va_\vr \circ \mu_\vr^{-1}.
\end{\equ}
Then, by the transformation property of the Dirac operator under conformal
change of the metric (see \cite{Hij86, Hit74}), a straightforward calculation shows that
\begin{\equ}\label{X1}
\vr \tilde D_\ig\phi_\vr=\ov{(\mu_\vr)}_*\circ (D_{\ig_\vr}\va_\vr)\circ \mu_\vr^{-1},
\end{\equ}
and moreover,
\begin{\equ}\label{X2}
\frac1{\vr^m}\int_{B_{\vr^{1/2}}(\xi_0)}\vr(\tilde D_\ig\phi_\vr,\phi_\vr)d\vol_\ig
=\int_{B_{\vr^{-1/2}}^0}(D_{\ig_\vr}\va_\vr,\va_\vr)d\vol_{\ig_\vr},
\end{\equ}
\begin{\equ}\label{X3}
\frac1{\vr^m}\int_{B_{\vr^{1/2}}(\xi_0)}(\om_\C\cdot_\ig\phi_\vr,\phi_\vr) d\vol_\ig
=\int_{B_{\vr^{-1/2}}^0}(\om_\C\cdot_{\ig_\vr}\va_\vr,\va_\vr) d\vol_{\ig_\vr},
\end{\equ}
\begin{\equ}\label{X4}
\frac1{\vr^m}\int_{B_{\vr^{1/2}}(\xi_0)}|\phi_\vr|^{p}d\vol_\ig
=\int_{B_{\vr^{-1/2}}^0} |\va_\vr|^{p} d\vol_{\ig_\vr},
\end{\equ}
where $\om_\C\cdot_\ig$ and $\om_\C\cdot_{\ig_\vr}$ denote the Clifford
multiplication by the chirality operators with respect to the metric $\ig$ and $\ig_\vr$
respectively.

\begin{Lem}\label{d-cl-vr to 0}
$\|\cl_\vr'(\phi_\vr)\|_\vr\to0$ as $\vr\to0$.
\end{Lem}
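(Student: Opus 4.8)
The plan is to show $|\cl_\vr'(\phi_\vr)[\eta]|\le o(1)\,\|\eta\|_\vr$ uniformly over $\eta\in\ch$, by transporting the whole expression to the Euclidean ball $B_{\vr^{-1/2}}^0$ and exploiting that the bubble $\psi_0$ solves the limiting equation \eqref{test equ}. Since $\cl_\vr'(\phi_\vr)[\eta]=\frac1{\vr^m}\real\int_M\big((A\phi_\vr,\eta)-b|\phi_\vr|^{p-2}(\phi_\vr,\eta)\big)d\vol_\ig$ with $A=\vr\tilde D_\ig+a\om_\C$, and $\phi_\vr$ is supported in $B_{\vr^{1/2}}(\xi_0)$, the identity \eqref{X1} and the fact that $\ov{(\mu_\vr)}_*$ is a fibrewise isometry give, for $\tilde\eta:=\ov{(\mu_\vr)}_*^{-1}\!\circ\big(\eta|_{B_{\vr^{1/2}}(\xi_0)}\big)\circ\mu_\vr$,
\[
\cl_\vr'(\phi_\vr)[\eta]=\real\int_{B_{\vr^{-1/2}}^0}\Big((D_{\ig_\vr}\va_\vr,\tilde\eta)_{\ig_\vr}+(a\circ\mu_\vr)(\om_\C\cdot_{\ig_\vr}\va_\vr,\tilde\eta)_{\ig_\vr}-(b\circ\mu_\vr)|\va_\vr|_{\ig_\vr}^{p-2}(\va_\vr,\tilde\eta)_{\ig_\vr}\Big)d\vol_{\ig_\vr};
\]
moreover the analogues of \eqref{X2}--\eqref{X4}, together with the interpolation argument used in Lemma \ref{embeding lemma} for the fractional part, show $\|\tilde\eta\|_{W^{1/2,2}(B_{\vr^{-1/2}}^0,\ig_\vr)}\le C\|\eta\|_\vr$ with $C$ independent of $\vr$. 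Denoting by $R_\vr$ the spinor occupying the first slot of the three pairings, it thus suffices to prove $\|R_\vr\|_{W^{-1/2,2}(B_{\vr^{-1/2}}^0,\ig_\vr)}\to0$ as $\vr\to0$.

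I would split $R_\vr=R_\vr^{\mathrm{flat}}+R_\vr^{\mathrm{met}}+R_\vr^{\mathrm{coef}}$, where $R_\vr^{\mathrm{flat}}:=\tilde D_{\ig_{\R^m}}\va_\vr+\nu_0\om_\C\cdot\va_\vr-\ka_0|\va_\vr|^{p-2}\va_\vr$ is computed with the flat metric, $R_\vr^{\mathrm{met}}$ gathers the differences between $D_{\ig_\vr}$ and $\tilde D_{\ig_{\R^m}}$, between $\om_\C\cdot_{\ig_\vr}$ and $\om_\C\cdot_{\ig_{\R^m}}$, and between $|\cdot|_{\ig_\vr}$ and $|\cdot|_{\ig_{\R^m}}$ in the cubic term, and $R_\vr^{\mathrm{coef}}:=(a\circ\mu_\vr-\nu_0)\om_\C\cdot_{\ig_\vr}\va_\vr-(b\circ\mu_\vr-\ka_0)|\va_\vr|_{\ig_\vr}^{p-2}\va_\vr$. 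Writing $\ig$ in geodesic normal coordinates centered at $\xi_0$, the components of $\ig_\vr=\vr^{-2}\mu_\vr^*\ig$ are $\de_{jk}+O(\vr^2|x|^2)$ with all derivatives $O(\vr^2|x|)$, so that on $B_{\vr^{-1/2}}^0$, where $\vr^2|x|^2\le\vr$, the corresponding perturbations of the Dirac operator, of the Clifford multiplication, and of the pointwise norm are $O(\vr)$ uniformly; consequently $\|R_\vr^{\mathrm{met}}\|_{L^2}\le C\vr\big(\|\va_\vr\|_{W^{1,2}(\R^m)}+\big\||\va_\vr|^{p-1}\big\|_{L^2}\big)$, which tends to $0$ once one has the uniform bound $\|\va_\vr\|_{W^{1,2}(\R^m)}\le C$ (a consequence of $\psi_0\in W^{1,2}(\R^m)$, itself obtained by routine elliptic bootstrapping on \eqref{test equ}, together with the boundedness of the cutoff $\eta_\vr$). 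Likewise, since $a,b\in C^1(M)$ and $\mu_\vr(x)=\exp_{\xi_0}(\vr x)$, one has $|a\circ\mu_\vr-\nu_0|+|b\circ\mu_\vr-\ka_0|\le C\vr|x|\le C\vr^{1/2}$ on $B_{\vr^{-1/2}}^0$, whence $\|R_\vr^{\mathrm{coef}}\|_{L^2}\le C\vr^{1/2}\big(\|\va_\vr\|_{L^2}+\big\||\va_\vr|^{p-1}\big\|_{L^2}\big)\to0$. As $L^2(\R^m)\hookrightarrow W^{-1/2,2}(\R^m)$ and the $\ig_\vr$-Sobolev norms on $B_{\vr^{-1/2}}^0$ are uniformly comparable to the flat ones, both $R_\vr^{\mathrm{met}}$ and $R_\vr^{\mathrm{coef}}$ go to $0$ in $W^{-1/2,2}(B_{\vr^{-1/2}}^0,\ig_\vr)$.

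The genuinely analytic point is the flat remainder. Using $\va_\vr=\eta_\vr\psi_0$, the Leibniz rule $\tilde D_{\ig_{\R^m}}(\eta_\vr\psi_0)=\eta_\vr\tilde D_{\ig_{\R^m}}\psi_0+\nabla\eta_\vr\cdot\psi_0$, and the equation \eqref{test equ} to substitute $\tilde D_{\ig_{\R^m}}\psi_0=-\nu_0\om_\C\cdot\psi_0+\ka_0|\psi_0|^{p-2}\psi_0$, all contributions of $\psi_0$ itself cancel and one is left with
\[
R_\vr^{\mathrm{flat}}=\nabla\eta_\vr\cdot\psi_0+\ka_0\big(\eta_\vr-\eta_\vr^{p-1}\big)|\psi_0|^{p-2}\psi_0,
\]
both terms supported in the annulus $\{\,\tfrac12\vr^{-1/2}\le|x|\le\vr^{-1/2}\,\}$. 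For the first term I would combine Hölder's inequality with the scaling identity $\|\nabla\eta_\vr\|_{L^m(\R^m)}=\|\nabla\eta\|_{L^m(\R^m)}$ to obtain $\big\|\nabla\eta_\vr\cdot\psi_0\big\|_{L^{2m/(m+1)}}\le\|\nabla\eta\|_{L^m}\|\psi_0\|_{L^2(\{|x|\ge\vr^{-1/2}/2\})}\to0$, which uses only $\psi_0\in L^2$; for the second, $\big\||\psi_0|^{p-1}\big\|_{L^{p/(p-1)}(\{|x|\ge\vr^{-1/2}/2\})}=\|\psi_0\|_{L^p(\{|x|\ge\vr^{-1/2}/2\})}^{p-1}\to0$ since $\psi_0\in L^p$ and $p<m^*$. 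Because $L^{2m/(m+1)}(\R^m)$ and $L^{p/(p-1)}(\R^m)$ both embed into $W^{-1/2,2}(\R^m)$, this gives $\|R_\vr^{\mathrm{flat}}\|_{W^{-1/2,2}}\to0$, and combining the three estimates yields $\|R_\vr\|_{W^{-1/2,2}(B_{\vr^{-1/2}}^0,\ig_\vr)}\to0$, i.e. $\|\cl_\vr'(\phi_\vr)\|_\vr\to0$.

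The main obstacle is not any single estimate but the uniformity bookkeeping underlying the reduction to the Euclidean ball: one must verify that the weighted norms $\|\cdot\|_\vr$ and $|\cdot|_{q,\vr}$ transform, under the rescaling $\mu_\vr$ and the Bourguignon--Gauduchon trivialization \cite{BG}, precisely so as to absorb the factor $\vr^{-m}$ and leave behind the intrinsic norms of $W^{\pm1/2,2}(B_{\vr^{-1/2}}^0,\ig_\vr)$, whose embedding constants remain bounded thanks to the $C^\infty$-convergence $\ig_\vr\to\ig_{\R^m}$ and the $\vr$-uniform embedding of Lemma \ref{embeding lemma}. A secondary technical point, needed for $R_\vr^{\mathrm{met}}$, is the statement $\psi_0\in W^{1,2}(\R^m)\cap C^\infty$, which follows routinely from elliptic regularity applied to \eqref{test equ}.
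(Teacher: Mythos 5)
Your argument is essentially the paper's own proof: transport everything to the rescaled ball $B^0_{\vr^{-1/2}}$ via the Bourguignon--Gauduchon trivialization, use the limiting equation \eqref{test equ} to cancel the leading term, and estimate the four error sources (cutoff derivative, metric deviation $\ig_\vr\to\ig_{\R^m}$, coefficient deviation $a\circ\mu_\vr\to\nu_0$, $b\circ\mu_\vr\to\ka_0$, and the $\eta_\vr-\eta_\vr^{p-1}$ mismatch) by H\"older against the $\vr$-uniform embedding of Lemma \ref{embeding lemma}; your grouping into $R_\vr^{\mathrm{flat}}+R_\vr^{\mathrm{met}}+R_\vr^{\mathrm{coef}}$ is just a repackaging of the paper's $l_1,\dots,l_4$. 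One small slip: H\"older with exponents $\frac1m+\frac12=\frac{m+2}{2m}$ places $\nabla\eta_\vr\cdot\psi_0$ in $L^{2m/(m+2)}$, not $L^{2m/(m+1)}$, and the former does not embed into $W^{-1/2,2}(\R^m)$; this is fixed either by using $\|\nabla\eta_\vr\|_{L^{2m}}=\vr^{1/4}\|\nabla\eta\|_{L^{2m}}$, or, as the paper does, by the sup-bound $\|\nabla\eta_\vr\|_\infty\le C\vr^{1/2}$ together with an $L^2$--$L^2$ pairing against $|\va|_{2,\vr}\le c_2\|\va\|_\vr$.
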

\begin{proof}
Let $\va\in\ch$ be an arbitrary test spinor, it follows that
\begin{\equ}\label{d1}
\cl_\vr(\phi_\vr)[\va]=\frac1{\vr^m}\real \int_M(A\phi_\vr,\va)-b|\phi_\vr|^{p-2}(\phi_\vr,\va)d\vol_\ig.
\end{\equ}
Notice that $\phi_\vr=\ov{(\mu_\vr)}_*\circ\va_\vr\circ\mu_\vr^{-1}$, we have
\[
\vr\tilde D_\ig\phi_\vr=\ov{(\mu_\vr)}_*\circ( \nabla\eta_\vr\cdot_{\ig_\vr}\psi_0 )
\circ \mu_\vr^{-1}+\ov{(\mu_\vr)}_*\circ( \eta_\vr \tilde D_{\ig_\vr}\psi_0) \circ
\mu_\vr^{-1}
\]
where $\cdot_{\ig_\vr}$ is the Clifford multiplication with respect to the metric $\ig_\vr$.
Substituting this into \eqref{d1}, we get
\begin{\equ}\label{d2}
\cl_\vr'(\phi_\vr)[\va]=l_1+l_2+l_3+l_4
\end{\equ}
where
\[
\aligned
l_1&=\frac1{\vr^m}\real\int_M\big( \ov{(\mu_\vr)}_*\circ( \nabla\eta_\vr\cdot_{\ig_\vr}\psi_0)
\circ \mu_\vr^{-1},\va ) d\vol_\ig \\[0.5em]
&=\real\int_{B^0_{\vr^{-1/2}}}\big( \nabla\eta_\vr\cdot_{\ig_\vr}\psi_0,
\ov{(\mu_\vr)}_*^{-1}\circ\va\circ\mu_\vr \big) d\vol_{\ig_\vr},
\endaligned
\]
\[
\aligned
l_2&=\frac1{\vr^m}\real\int_M(\eta_\vr\circ\mu_\vr^{-1})\big( \ov{(\mu_\vr)}_*\circ(
\tilde D_{\ig_\vr}\psi_0- \tilde D_{\ig_{\R^m}}\psi_0 )\circ\mu_\vr^{-1},\va) d\vol_\ig \\[0.5em]
&=\real\int_{B^0_{\vr^{-1/2}}}\eta_\vr\cdot\big(\tilde D_{\ig_\vr}\psi_0- \tilde D_{\ig_{\R^m}}\psi_0,
\ov{(\mu_\vr)}_*^{-1}\circ\va\circ\mu_\vr \big) d\vol_{\ig_\vr},
\endaligned
\]
\[
\aligned
l_3&=\frac1{\vr^m}\real\int_M(\eta_\vr\circ\mu_\vr^{-1})\big( \ov{(\mu_\vr)}_*\circ(
\tilde D_{\ig_{\R^m}}\psi_0+ a\om_\C\cdot_{\ig_\vr}\psi_0-b|\psi_0|^{p-2}\psi_0
)\circ \mu_\vr^{-1}, \va \big) d\vol_\ig  \\[0.5em]
&=\real\int_{B^0_{\vr^{-1/2}}}\eta_\vr\cdot\big( \tilde D_{\ig_{\R^m}}\psi_0+ (a\circ\mu_\vr)\om_\C\cdot_{\ig_\vr}
\psi_0-(b\circ\mu_\vr)|\psi_0|^{p-2}\psi_0,
\ov{(\mu_\vr)}_*^{-1}\circ\va\circ\mu_\vr \big) d\vol_{\ig_\vr},
\endaligned
\]
and
\[
\aligned
l_4&=\frac1{\vr^m}\real\int_M\big( \ov{(\mu_\vr)}_*\circ(b\cdot\eta_\vr|\psi_0|^{p-2}\psi_0
-b\cdot\eta_\vr^{p-1}|\psi_0|^{p-2}\psi_0)\circ\mu_\vr^{-1},\va ) d\vol_\ig \\[0.5em]
&=\real\int_{B^0_{\vr^{-1/2}}}(b\circ\mu_\vr)(\eta_\vr-\eta_\vr^{p-1})|\psi_0|^{p-2}
(\psi_0,\ov{(\mu_\vr)}_*^{-1}\circ\va\circ\mu_\vr ) d\vol_{\ig_\vr}.
\endaligned
\]

For $l_1$, by the H\"older inequality and Lemma \ref{embeding lemma}, we have
\begin{eqnarray*}
|l_1|&\leq& \Big( \int_{B^0_{\vr^{-1/2}}}|\nabla\eta_\vr\cdot_{\ig_\vr}\psi_0|^2 d\vol_{\ig_\vr}
\Big)^{\frac12} \Big( \frac1{\vr^m}\int_{B^0_{\vr^{-1/2}}}
|\va\circ\mu_\vr|^2 d\vol_{\mu_\vr^*\ig}\Big)^\frac12     \\
&\leq& C\vr^{\frac12} \Big( \int_{B^0_{\vr^{-1/2}}\setminus B^0_{\frac12\vr^{-1/2}}}|\psi_0|^2 d\vol_{\ig_{\R^m}}
\Big)^{\frac12} \cdot |\va|_{2,\vr}   \\
&\leq& C\vr^{\frac12} |\psi_0|_2\cdot \|\va\|_\vr,
\end{eqnarray*}
where we used $d\vol_{\ig_\vr}\leq C d\vol_{\ig_{\R^m}}$ on $B^0_{\vr^{-1/2}}$ for some
constant $C>0$ as $\vr\to0$. We soon obtain
\begin{\equ}\label{l1}
|l_1|\leq o_\vr(1)\|\va\|_\vr \quad \text{as } \vr\to0.
\end{\equ}

In order to estimate $l_2$, let us mention that, by \eqref{test equ} and the $L^p$-theory for
Dirac operators, we have $\nabla\psi_0\in L^{\frac{p}{p-1}}(\R^m,\tilde\mbs(\R^m))$.
And hence, we get
\begin{eqnarray*}
|l_2|&\leq& \Big(\int_{B^0_{\vr^{-1/2}}}|\tilde D_{\ig_\vr}\psi_0-\tilde D_{\ig_{\R^m}}\psi_0|^\frac{p}{p-1}
d \vol_{\ig_\vr} \Big)^{\frac{p-1}{p}} \Big( \frac1{\vr^m}\int_{B^0_{\vr^{-1/2}}} |\va\circ\mu_\vr|^{p}
d\vol_{\mu_\vr^*\ig}\Big)^{\frac1{p}}   \\
&\leq& C \Big(\int_{B^0_{\vr^{-1/2}}}|\tilde D_{\ig_\vr}\psi_0-\tilde D_{\ig_{\R^m}}\psi_0|^\frac{p}{p-1}
d \vol_{\ig_{\R^m}} \Big)^{\frac{p-1}{p}} \|\va\|_\vr .
\end{eqnarray*}
Since $\ig_\vr\to\ig_{\R^m}$ on $B^0_{\vr^{-1/2}}$ in $C^\infty$-topology as $\vr\to0$, we can get
further from the above estimate that
\begin{\equ}\label{l2}
|l_2|\leq o_\vr(1)\|\va\|_\vr \quad \text{as } \vr\to0.
\end{\equ}

The estimate for $l_3$ is much more clear. Indeed, by the definition of $\mu_\vr$, we have
\[
a\circ\mu_\vr \to\nu_0 \quad \text{and} \quad b\circ\mu_\vr\to\ka_0
\]
uniformly on $B^0_{\vr^{-1/2}}$ and, therefore, it follows that
\begin{eqnarray}\label{l3}
|l_3|&\leq& C\Big(\int_{B^0_{\vr^{-1/2}}} |a\circ\mu_\vr-\nu_0|^2|\psi_0|^2d\vol_{\ig_{\R^m}}
\Big)^{\frac12}|\va|_{2,\vr}    \nonumber \\
& & +C\Big( \int_{B^0_{\vr^{-1/2}}} |b\circ\mu_\vr-\ka_0|^{\frac{p}{p-1}}|\psi_0|^{p}d\vol_{\ig_{\R^m}}
\Big)^{\frac{p-1}{p}}|\va|_{p,\vr}  \nonumber \\
&\leq& o_\vr(1)\|\va\|_\vr
\end{eqnarray}
as $\vr\to0$.

It remains to estimate $l_4$. Similarly as was argued in the above, we have
\begin{eqnarray*}
|l_4|&\leq& C \Big(\int_{B^0_{\vr^{-1/2}}} (\eta_\vr-\eta_\vr^{p-1})^{\frac{p}{p-1}}|\psi_0|^{p}
d\vol_{\ig_{\R^m}} \Big)^{\frac{p-1}{p}} |\va|_{p,\vr}  \\
&\leq& C\Big(\int_{B^0_{\vr^{-1/2}}\setminus B^0_{\frac12\vr^{-1/2}}} |\psi_0|^{p}
d\vol_{\ig_{\R^m}} \Big)^{\frac{p-1}{p}} \|\va\|_\vr
\end{eqnarray*}
where
\[
\int_{B^0_{\vr^{-1/2}}\setminus B^0_{\frac12\vr^{-1/2}}} |\psi_0|^{p}
d\vol_{\ig_{\R^m}}\to0
\]
as $\vr\to0$. Hence we have
\begin{\equ}\label{l4}
|l_4|\leq o_\vr(1)\|\va\|_\vr \quad \text{as } \vr\to0.
\end{\equ}

Combining \eqref{l1}-\eqref{l4}, we have $\|\cl_\vr'(\phi_\vr)\|_\vr\to0$ as $\vr\to0$
as desired.
\end{proof}

\begin{Lem}\label{cl to ga}
$\cl_\vr(\phi_\vr)\to\ga(\nu_0,\ka_0)$ as $\vr\to0$.
\end{Lem}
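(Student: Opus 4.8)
The plan is to unfold $\cl_\vr(\phi_\vr)$ into its Dirac quadratic term, its $\om_\C$-term and its $L^{p}$-term, transport each of them to the expanding ball $B^0_{\vr^{-1/2}}$ by the conformal identities \eqref{X2}, \eqref{X3}, \eqref{X4}, and then let $\vr\to0$. Since $A=\vr\tilde D_\ig+a\om_\C$ one has
\[
\cl_\vr(\phi_\vr)=\frac1{\vr^m}\int_M\Big(\frac12\vr(\tilde D_\ig\phi_\vr,\phi_\vr)+\frac a2(\om_\C\cdot\phi_\vr,\phi_\vr)-\frac bp|\phi_\vr|^{p}\Big)\,d\vol_\ig,
\]
and the change of variables $y=\mu_\vr(x)$ together with \eqref{X2}--\eqref{X4} (under which the coefficients $a,b$ become $a\circ\mu_\vr$, $b\circ\mu_\vr$) rewrites this as
\[
\cl_\vr(\phi_\vr)=\int_{B^0_{\vr^{-1/2}}}\Big(\frac12(D_{\ig_\vr}\va_\vr,\va_\vr)+\frac{a\circ\mu_\vr}2(\om_\C\cdot_{\ig_\vr}\va_\vr,\va_\vr)-\frac{b\circ\mu_\vr}p|\va_\vr|^{p}\Big)\,d\vol_{\ig_\vr}.
\]
Recalling that $\ig_\vr\to\ig_{\R^m}$ in $C^\infty$ on compact sets, that $a\circ\mu_\vr\to\nu_0$ and $b\circ\mu_\vr\to\ka_0$ uniformly on $B^0_{\vr^{-1/2}}$, and that $\va_\vr=\eta_\vr\psi_0$ with $|\va_\vr|\le|\psi_0|$ and $\va_\vr\to\psi_0$ pointwise, the candidate limit is
\[
\frac12\int_{\R^m}(\tilde D_{\ig_{\R^m}}\psi_0,\psi_0)\,dx+\frac{\nu_0}2\int_{\R^m}(\om_\C\cdot\psi_0,\psi_0)\,dx-\frac{\ka_0}p|\psi_0|_p^p=\Phi_{\nu_0\ka_0}(\psi_0),
\]
where the last equality uses $\|Q_{\nu_0}\psi_0\|_{\nu_0}^2-\|P_{\nu_0}\psi_0\|_{\nu_0}^2=\int_{\R^m}(A_{\nu_0}\psi_0,\psi_0)\,dx$; by \eqref{test spinor energy} this is exactly $\ga(\nu_0,\ka_0)$.

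For the $\om_\C$-term and the $L^{p}$-term the convergence will be a routine dominated-convergence argument: pointwise a.e.\ on $\R^m$ one has $d\vol_{\ig_\vr}\to dx$, $(\om_\C\cdot_{\ig_\vr}\va_\vr,\va_\vr)\to(\om_\C\cdot\psi_0,\psi_0)$ and $|\va_\vr|^{p}\to|\psi_0|^{p}$, the integrands are dominated by $C|\psi_0|^{2}\in L^1(\R^m)$ and $C|\psi_0|^{p}\in L^1(\R^m)$ respectively, and $d\vol_{\ig_\vr}\le C\,dx$ on $B^0_{\vr^{-1/2}}$. The delicate term is the Dirac one, $\int_{B^0_{\vr^{-1/2}}}(D_{\ig_\vr}\va_\vr,\va_\vr)\,d\vol_{\ig_\vr}$, and here I would reuse the estimates already made in the proof of Lemma \ref{d-cl-vr to 0}. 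Splitting $D_{\ig_\vr}\va_\vr=\eta_\vr D_{\ig_\vr}\psi_0+\nabla\eta_\vr\cdot_{\ig_\vr}\psi_0$, the cutoff contribution $\int_{B^0_{\vr^{-1/2}}}\eta_\vr(\nabla\eta_\vr\cdot_{\ig_\vr}\psi_0,\psi_0)\,d\vol_{\ig_\vr}$ is bounded in modulus by $C\vr^{1/2}\int_{B^0_{\vr^{-1/2}}\setminus B^0_{\frac12\vr^{-1/2}}}|\psi_0|^{2}\,dx=o(1)$, since $|\nabla\eta_\vr|\le C\vr^{1/2}$ and $\psi_0\in L^2(\R^m)$. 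For the principal term $\int_{B^0_{\vr^{-1/2}}}\eta_\vr^{2}(D_{\ig_\vr}\psi_0,\psi_0)\,d\vol_{\ig_\vr}$ I would cut $\R^m$ into a fixed ball $B_R^0$ and its complement: on $B_R^0$ the coefficients of $D_{\ig_\vr}$ converge in $C^0$ to those of $\tilde D_{\ig_{\R^m}}$, hence $D_{\ig_\vr}\psi_0\to\tilde D_{\ig_{\R^m}}\psi_0$ in $L^{p/(p-1)}(B_R^0)$, using $\nabla\psi_0\in L^{p/(p-1)}(\R^m)$ (which follows from the $L^{p}$-theory for the Dirac operator applied to \eqref{test equ}, exactly as in Lemma \ref{d-cl-vr to 0}); outside $B_R^0$ the tail is bounded by $C\,\|\nabla\psi_0\|_{L^{p/(p-1)}(\R^m\setminus B_R^0)}\,\|\psi_0\|_{L^{p}(\R^m\setminus B_R^0)}$, which is as small as wished for $R$ large because the coefficients of $D_{\ig_\vr}$ are uniformly bounded on $B^0_{\vr^{-1/2}}$. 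An $\eps/3$-argument then gives $\int_{B^0_{\vr^{-1/2}}}\eta_\vr^{2}(D_{\ig_\vr}\psi_0,\psi_0)\,d\vol_{\ig_\vr}\to\int_{\R^m}(\tilde D_{\ig_{\R^m}}\psi_0,\psi_0)\,dx$.

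Combining the three limits yields $\cl_\vr(\phi_\vr)\to\Phi_{\nu_0\ka_0}(\psi_0)=\ga(\nu_0,\ka_0)$. The main obstacle, as indicated, is the uniform control of the Dirac term over the growing domain $B^0_{\vr^{-1/2}}$: one must rule out a loss of mass towards the expanding boundary, and this is possible precisely because $\psi_0$ decays (it lies in $L^2\cap L^{p}$) and $\nabla\psi_0\in L^{p/(p-1)}$ --- the very estimates carried out in the proof of Lemma \ref{d-cl-vr to 0}, which can therefore essentially be quoted here.
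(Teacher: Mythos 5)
Your proof is correct, but it takes a genuinely different and more laborious route than the paper's. The paper never passes the Dirac quadratic term to the limit at all: since Lemma \ref{d-cl-vr to 0} gives $\|\cl_\vr'(\phi_\vr)\|_\vr\to0$ and $\|\phi_\vr\|_\vr$ is bounded, one has $\cl_\vr(\phi_\vr)=\cl_\vr(\phi_\vr)-\tfrac12\cl_\vr'(\phi_\vr)[\phi_\vr]+o_\vr(1)=\tfrac{p-2}{2p\,\vr^m}\int_M b|\phi_\vr|^p d\vol_\ig+o_\vr(1)$, so the entire quadratic part cancels and only the $L^p$ term must be transported to $B^0_{\vr^{-1/2}}$ and passed to the limit; the identification with $\ga(\nu_0,\ka_0)$ then uses $\Phi_{\nu_0\ka_0}(\psi_0)=\tfrac{p-2}{2p}\ka_0|\psi_0|_p^p$, which follows from $\Phi_{\nu_0\ka_0}'(\psi_0)[\psi_0]=0$ --- the same Euler--Lagrange cancellation on the Euclidean side. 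Your direct term-by-term computation instead forces you to control $\int_{B^0_{\vr^{-1/2}}}(D_{\ig_\vr}\va_\vr,\va_\vr)\,d\vol_{\ig_\vr}$ over the expanding domain, which is exactly the delicate point you identify; your treatment of it is sound (the cutoff contribution is $O(\vr^{1/2})|\psi_0|_2^2$, and the principal term is handled via $\nabla\psi_0\in L^{p/(p-1)}$ and $\psi_0\in L^p$, both available from the $L^p$-theory quoted in Lemma \ref{d-cl-vr to 0} --- in fact the estimate of $l_2$ there already gives $\|\tilde D_{\ig_\vr}\psi_0-\tilde D_{\ig_{\R^m}}\psi_0\|_{L^{p/(p-1)}(B^0_{\vr^{-1/2}})}\to0$ globally, so your $\eps/3$ split into $B_R^0$ and its complement can be shortened). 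What your approach buys is independence from Lemma \ref{d-cl-vr to 0} as a black box for the energy expansion; what the paper's approach buys is brevity and the avoidance of any convergence argument for the indefinite quadratic form. Both are valid; you could tighten yours by quoting the $l_1$, $l_2$ estimates verbatim rather than re-deriving them.
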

\begin{proof}
Due to the definition of the test spinor $\phi_\vr$ in \eqref{test spinor}, we can find
a constant $C>0$ independent of $\vr$ such that $\|\phi_\vr\|_\vr\leq C$.
And thus, by Lemma \ref{d-cl-vr to 0}, we easily see that
\begin{\equ}
\cl_\vr(\phi_\vr)=\cl_\vr(\phi_\vr)-\frac12\cl_\vr'(\phi_\vr)[\phi_\vr]+o_\vr(1)=
\frac{p-2}{2\vr^m\, p}\int_M b |\phi_\vr|^{p}d\vol_\ig+o_\vr(1)
\end{\equ}
as $\vr\to0$.

Observe that
\[
\frac1{\vr^m}\int_M b |\phi_\vr|^{p}d\vol_\ig=\frac1{\vr^m}\int_{B_{\vr^{1/2}}(\xi_0)}
b |\phi_\vr|^{p}d\vol_\ig
=\int_{B^0_{\vr^{-1/2}}}b\circ\mu_\vr|\eta_\vr\cdot\psi_0|^{p}d\vol_{\ig_\vr},
\]
where
\[
\int_{B^0_{\vr^{-1/2}}}b\circ\mu_\vr|\eta_\vr\cdot\psi_0|^{p}d\vol_{\ig_\vr}
=\ka_0\int_{\R^m}|\psi_0|^{p}d\vol_{\ig_{\R^m}}+o_\vr(1)
\]
as $\vr\to0$.

Thus, by \eqref{test equ} and \eqref{test spinor energy}, we soon have
\[
\cl_\vr(\phi_\vr)=\frac{p-2}{2p}\ka_0\int_{\R^m}|\psi_0|^{p}d\vol_{\ig_{\R^m}}+o_\vr(1)
=\ga(\nu_0,\ka_0)+o_\vr(1)
\]
as $\vr\to0$ which completes the proof.
\end{proof}

Here, we emphasize that the above two lemmas yield a description of the limiting behavior
of the critical value $\ga_\vr$ obtained in \eqref{ga-vr}. Namely, applying Corollary \ref{key corollary}
for $\{\phi_\vr\}$, we obtain the following estimate.

\begin{Cor}\label{ga-vr leq}
$\displaystyle\limsup_{\vr\to0}\ga_\vr\leq \ga(\nu_0,\ka_0)=\al_{min}\ga(1,1)$.
\end{Cor}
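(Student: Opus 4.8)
The plan is to feed the concentrated test spinors $\{\phi_\vr\}$ from \eqref{test spinor} into the abstract estimate of Corollary \ref{key corollary}. The first step is to check that $\{\phi_\vr\}$ satisfies the standing hypothesis \eqref{assumption0}: the bound $\|\cl_\vr'(\phi_\vr)\|_\vr\to0$ is precisely Lemma \ref{d-cl-vr to 0}, while Lemma \ref{cl to ga} gives $\cl_\vr(\phi_\vr)\to\ga(\nu_0,\ka_0)$. Since the super-quadratic term in $\Phi_{\nu_0\ka_0}$ is an honest $L^p$-norm, the mountain-pass level $\ga(\nu_0,\ka_0)$ is strictly positive, so for all sufficiently small $\vr$ one has $c_1\le\cl_\vr(\phi_\vr)\le c_2$ with, e.g., $c_1=\tfrac12\ga(\nu_0,\ka_0)$ and $c_2=2\ga(\nu_0,\ka_0)$.

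Granting \eqref{assumption0}, Corollary \ref{key corollary} produces $u_\vr\in\msn_\vr$ with $I_\vr(u_\vr)\le\cl_\vr(\phi_\vr)+O(\|\cl_\vr'(\phi_\vr)\|_\vr^2)$. By the variational characterization \eqref{ga-vr} one has $\ga_\vr=\inf_{u\in\msn_\vr}I_\vr(u)\le I_\vr(u_\vr)$, hence $\ga_\vr\le\cl_\vr(\phi_\vr)+O(\|\cl_\vr'(\phi_\vr)\|_\vr^2)$. Passing to the $\limsup$ as $\vr\to0$, the remainder tends to $0$ by Lemma \ref{d-cl-vr to 0} and $\cl_\vr(\phi_\vr)\to\ga(\nu_0,\ka_0)$ by Lemma \ref{cl to ga}, so $\limsup_{\vr\to0}\ga_\vr\le\ga(\nu_0,\ka_0)$.

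It then remains only to identify $\ga(\nu_0,\ka_0)=\al_{min}\ga(1,1)$, which is exactly \eqref{test spinor energy}: by Proposition \ref{euclidean prop} together with the definition $\al=a^{-(m-1)+\frac2{p-2}}b^{-\frac2{p-2}}$ one has $\ga(\nu_0,\ka_0)=\nu_0^{-(m-1)+\frac2{p-2}}\ka_0^{-\frac2{p-2}}\ga(1,1)=\al(\xi_0)\ga(1,1)$, and $\al(\xi_0)=\al_{min}$ since $\xi_0\in\cc$. The argument is essentially bookkeeping assembled from the three preceding results; the only points demanding a little care are verifying the lower bound $c_1>0$ in \eqref{assumption0} (where the strict positivity of $\ga(\nu_0,\ka_0)$ enters) and noting that Corollary \ref{key corollary} is applied to the whole family $\{\phi_\vr\}$, not merely a subsequence, so that the $\limsup$ bound is genuine.
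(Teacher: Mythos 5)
Your proposal is correct and follows exactly the route the paper intends: verify \eqref{assumption0} for the test spinors via Lemmas \ref{d-cl-vr to 0} and \ref{cl to ga}, invoke Corollary \ref{key corollary} to produce $u_\vr\in\msn_\vr$ with $I_\vr(u_\vr)\le\cl_\vr(\phi_\vr)+O(\|\cl_\vr'(\phi_\vr)\|_\vr^2)$, and use the characterization \eqref{ga-vr} of $\ga_\vr$ as an infimum over $\msn_\vr$, with the identification $\ga(\nu_0,\ka_0)=\al_{min}\ga(1,1)$ coming from Proposition \ref{euclidean prop}. The paper states this in a single sentence; your write-up supplies the same bookkeeping correctly.
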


\section{Asymptotic profiles of the solutions}

In the present section we will firstly establish a complete description for
the model problem \eqref{equ3}.
After that we will return to the analysis of the limiting behavior of the conformal
metrics induced by \eqref{equ2}.

To begin with, let $\{\psi_\vr\}$ be a family of solutions to \eqref{equ3}
found by \eqref{ga-vr}, i.e.,
\begin{\equ}\label{xx0}
\cl_\vr(\psi_\vr)=\ga_\vr \quad \text{and} \quad \cl_\vr'(\psi_\vr)=0.
\end{\equ}
Then, as was mentioned in \eqref{ga-vr geq tau}, we find
\begin{\equ}\label{xx1}
\frac{p-2}{2\vr^m\, p}\int_M b |\psi_\vr|^{p}d\vol_\ig=\ga_\vr\geq\tau_0>0
\end{\equ}
for some $\tau_0>0$.
In what follows, for any $\xi\in M$ and $r>0$, $B_r(\xi)\subset M$ denote the distance
ball of radius $r$ with respect to the metric $\ig$.

\begin{Lem}\label{step1}
There exist $\xi_\vr\in M$, $r_0,\de_0>0$ such that
\[
\liminf_{\vr\to0}\frac1{\vr^m}\int_{B_{\vr r_0}(\xi_\vr)}|\psi_\vr|^2d\vol_\ig \geq \de_0.
\]
\end{Lem}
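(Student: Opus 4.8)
The plan is to argue by contradiction through a Lions-type vanishing alternative, carried out in the $\vr$-rescaled geometry. As a preliminary, note that the family $\{\psi_\vr\}$ satisfies condition \eqref{assumption0}: by Corollary \ref{ga-vr leq} one has $\ga_\vr\le\al_{min}\ga(1,1)+1$ for $\vr$ small, while $\ga_\vr\ge\tau_0$ by the remark following \eqref{ga-vr geq tau}, and $\cl_\vr'(\psi_\vr)=0$. Hence Lemma \ref{a1}$(1)$ gives a uniform bound $\|\psi_\vr\|_\vr\le C$. Moreover \eqref{xx1} together with $b\le\max_M b$ yields
\[
\frac1{\vr^m}\int_M|\psi_\vr|^p\,d\vol_\ig\ \ge\ c_0:=\frac{2p\,\tau_0}{(p-2)\max_M b}\ >\ 0
\]
for all small $\vr$, a quantitative non-smallness that the vanishing lemma below will contradict.

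It suffices to prove the claim $L:=\liminf_{\vr\to0}\sup_{\xi\in M}\vr^{-m}\int_{B_\vr(\xi)}|\psi_\vr|^2\,d\vol_\ig>0$: granting it, one takes $r_0=1$, $\de_0=L/2$, and chooses $\xi_\vr\in M$ realizing the (attained, by compactness of $M$) supremum, so that $\liminf_{\vr\to0}\vr^{-m}\int_{B_{\vr}(\xi_\vr)}|\psi_\vr|^2\,d\vol_\ig=L\ge\de_0$. Suppose instead $L=0$, so that along a subsequence
\[
\de_\vr:=\sup_{\xi\in M}\frac1{\vr^m}\int_{B_\vr(\xi)}|\psi_\vr|^2\,d\vol_\ig\ \longrightarrow\ 0.
\]
I will show that $\de_\vr\to0$ forces $\vr^{-m}\int_M|\psi_\vr|^p\,d\vol_\ig\to0$, contradicting the displayed lower bound.

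To establish this manifold version of Lions' vanishing lemma, cover $M$ by geodesic balls $B_\vr(z_i)$, $i=1,\dots,N_\vr$, of overlap multiplicity at most $\ka=\ka(m)$ (possible uniformly in small $\vr$ since a compact $M$ has bounded geometry). On each $B_\vr(z_i)$ rescale by $\mu_\vr^i=\exp_{z_i}(\vr\,\cdot)$ together with the Bourguignon--Gauduchon bundle identification, exactly as in \eqref{X1}--\eqref{X4}: the pulled-back spinor then lives on the fixed Euclidean unit ball $B_1^0$, the rescaled metrics $\ig_\vr^i=\vr^{-2}(\mu_\vr^i)^*\ig$ are uniformly bi-Lipschitz to $\ig_{\R^m}$ on $B_1^0$, and the weighted integrals $\vr^{-m}\int_{B_\vr(z_i)}|\psi_\vr|^q\,d\vol_\ig$ become ordinary $L^q(B_1^0)$-integrals of the rescaled spinor, up to $\vr$-uniform constants. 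On $B_1^0$ the fractional Gagliardo--Nirenberg inequality
\[
\|u\|_{L^p(B_1^0)}\le C\,\|u\|_{W^{\frac12,2}(B_1^0)}^{1-\theta}\,\|u\|_{L^2(B_1^0)}^{\theta},\qquad \theta\in(0,1),
\]
holds because $2<p<m^*=\frac{2m}{m-1}$ lies in the subcritical range for $W^{\frac12,2}(\R^m)$. Raising to the $p$-th power, splitting off a factor $\|u\|_{L^2(B_1^0)}^{\theta p-2}\le\de_\vr^{(\theta p-2)/2}$ when $\theta p\ge2$ (otherwise absorbing the deficit into the $W^{\frac12,2}$-factor), summing over $i$, and using the bounded overlap to compare $\sum_i\|\cdot\|_{W^{\frac12,2}(B_\vr(z_i))}^2$ with the global fractional norm — whose $\vr$-uniform localization constants are exactly what the uniform Euclidean comparability of the $\ig_\vr^i$ provides, and which is controlled by $\|\psi_\vr\|_\vr\le C$ via the identities \eqref{X1}--\eqref{X4} — one arrives at
\[
\frac1{\vr^m}\int_M|\psi_\vr|^p\,d\vol_\ig\ \le\ C\,\de_\vr^{\,\si}
\]
for some $\si>0$. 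Since $\de_\vr\to0$, this contradicts the lower bound of the first paragraph, which proves the claim and hence the lemma.

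The main obstacle is the rigorous realization of this vanishing lemma with all constants independent of $\vr$: one must localize the nonlocal spectral norm $\|\cdot\|_\vr$, built from $|A|^{1/2}$, to the tiny balls $B_\vr(z_i)$, and what makes this work is that after the rescalings $\mu_\vr^i$ these balls are all uniformly bi-Lipschitz to the single fixed ball $B_1^0$, so the interpolation constant, the overlap-summation constant, and the comparison between the spectral norm and the localized fractional seminorms are all $\vr$-uniform — the same mechanism, via \eqref{X1}--\eqref{X4}, already used for the test-spinor estimates in Section \ref{Bubble analysis}. The covering with bounded overlap, the exponent bookkeeping in the interpolation inequality, and the final contradiction with \eqref{xx1} are then routine.
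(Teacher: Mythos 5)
Your proposal is correct and follows essentially the same route as the paper: a Lions-type vanishing argument by contradiction, covering $M$ by balls of radius comparable to $\vr$ with bounded overlap, interpolating between the small $L^2$ mass on each ball and the $W^{\frac12,2}$-type norm, and summing to contradict the lower bound \eqref{xx1}. The only cosmetic difference is that the paper obtains the $\vr$-uniform interpolation constant by applying cut-off functions together with the already-established $\vr$-independent embedding of Lemma \ref{embeding lemma} (first for the exponent $q_s$ with $s=2/m^*$, then for all $q\in(2,m^*)$), rather than re-rescaling each ball to the fixed unit ball as you do.
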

\begin{proof}
Assume on the contrary that for any $r>0$
\begin{\equ}\label{a2}
\sup_{\xi\in M}\frac1{\vr^m}\int_{B_{2\vr r}(\xi)}|\psi_\vr|^2 d\vol_\ig \to0
\quad \text{as } \vr\to0.
\end{\equ}
For each $\xi\in M$, let us now choose a smooth real cut-off function
$\chi_{\xi,\vr}\equiv1$ on $B_{\vr r}(\xi)$ and $\supp\chi_{\xi,\vr}\subset B_{2\vr r}(\xi)$.
Then, for $s\in(0,1)$, we consider $q_s=2+(m^*-2)s\in(2,m^*)$ and we have
\[
\int_{B_{2\vr r}(\xi)}|\chi_{\xi,\vr}\psi_\vr|^{q_s}d\vol_\ig\leq
\Big( \int_{B_{2\vr r}(\xi)}|\chi_{\xi,\vr}\psi_\vr|^2 d\vol_\ig\Big)^{1-s}
\Big( \int_{B_{2\vr r}(\xi)}|\chi_{\xi,\vr}\psi_\vr|^{\frac{2m}{m-1}} d\vol_\ig\Big)^s.
\]
Taking $s=\frac2{m^*}$, we obtain from Lemma \ref{embeding lemma} that
\[
\Big( \frac1{\vr^m}\int_{B_{2\vr r}(\xi)}|\chi_{\xi,\vr}\psi_\vr|^{m^*} d\vol_\ig\Big)^s
\leq C \|\chi_{\xi,\vr}\psi_\vr\|_\vr^2.
\]
Now, covering $M$ by balls of radius $\vr r$ such that any point $\xi\in M$ is contained
in at most $K_M$ balls, where $K_M$ does not depend on $\vr$. This condition can be
satisfied for $\vr$ small by the compactness of $M$. And thus, we find
\[
\frac1{\vr^m}\int_M|\psi_\vr|^{q_s}d\vol_\ig\leq C\cdot K_M\Big(
\sup_{\xi\in M}\int_{B_{2\vr r}(\xi)}|\chi_{\xi,\vr}\psi_\vr|^2 d\vol_\ig\Big)^{1-s}
\|\psi_\vr\|_\vr^2.
\]
Notice that $\|\psi_\vr\|_\vr$ is bounded, it follows from \eqref{a2} that
$|\psi_\vr|_{q_s,\vr}\to0$. Since $2<q_s<m^*$, we see easily that
$|\psi_\vr|_{q,\vr}\to0$ for all $q\in(2,m^*)$ which contradict to \eqref{xx1}
\end{proof}

We may now fix the sequence $\{\xi_\vr\}\subset M$ and the constants $r_0, \de_0>0$
in Lemma \ref{step1}. Up to a subsequence if necessary, we assume that $\xi_\vr\to\xi_\infty\in M$
 as $\vr\to0$. Then, similar to Section \ref{Bubble analysis},
we can consider the rescaled geodesic normal
coordinates near each $\xi_\vr$ via the formula
\[
\Theta_\vr(x)=\exp_{\xi_\vr}(\vr x).
\]
For any $R\geq r_0$, we have a conformal equivalence $(B_R^0, \vr^{-2}\Theta_\vr^*\ig)
\cong (B_{\vr R}(\xi_\vr),\ig)$ for all small $\vr$.
Denoting by $\ig_{\Theta_\vr}=\vr^{-2}\Theta_\vr^*\ig$, then we have
$\ig_{\Theta_\vr}\to\ig_{\R^m}$ in $C^\infty(B_R^0)$ as $\vr\to0$.

Let $\ov{(\Theta_\vr)}_*:\mbs_x(B_R^0,\ig_{\Theta_\vr})\to\mbs_{\Theta_\vr(x)}
(B_{\vr R}(\xi_\vr),\ig)$ denote the bundle identification for spinors, we can introduce
a family of spinors on $B_R^0$ by
\begin{\equ}\label{spinors z}
z_\vr=\ov{(\Theta_\vr)}_*^{-1}\circ\psi_\vr\circ\Theta_\vr.
\end{\equ}
Along this line of consideration, we have
\[
\tilde D_{\ig_{\Theta_\vr}}z_\vr=\ov{(\Theta_\vr)}_*^{-1}\circ(\vr\tilde D_\ig \psi_\vr)
\circ \Theta_\vr,
\]
\begin{\equ}\label{z2}
\int_{B_R^0}(\tilde D_{\ig_{\Theta_\vr}}z_\vr,z_\vr)d\vol_{\ig_{\Theta_\vr}}=
\frac1{\vr^m}\int_{B_{\vr R}(\xi_\vr)}(\vr \tilde D_\ig \psi_\vr,\psi_\vr) d\vol_\ig,
\end{\equ}
\begin{\equ}\label{z3}
\int_{B_R^0}(\om_\C\cdot_{\ig_{\Theta_\vr}}z_\vr,z_\vr)d\vol_{\ig_{\Theta_\vr}}=
\frac1{\vr^m}\int_{B_{\vr R}(\xi_\vr)}(\om_\C\cdot_{\ig}\psi_\vr,\psi_\vr) d\vol_\ig,
\end{\equ}
\begin{\equ}\label{z4}
\int_{B_R^0}|z_\vr|^{p}d\vol_{\ig_{\Theta_\vr}}=
\frac1{\vr^m}\int_{B_{\vr R}(\xi_\vr)}|\psi_\vr|^{p} d\vol_\ig.
\end{\equ}
Particularly, since $\|\psi_\vr\|_\vr$ is bounded, we have
\begin{\equ}\label{z5}
0<\liminf_{\vr\to0}\int_{B_R^0}|z_\vr|^{p}d\vol_{\ig_{\Theta_\vr}}\leq
\limsup_{\vr\to0}\frac1{\vr^m}\int_M|\psi_\vr|^{p}d\vol_\ig<\infty
\end{\equ}
for any $R>r_0$.

Recalling the set $\cc\subset M$ defined in \eqref{cc}, we have
\begin{Lem}\label{step2}
$\xi_\infty\in \cc$, i.e., $\dist_{\ig}(\xi_\vr,\cc)\to0$ as $\vr\to0$.
\end{Lem}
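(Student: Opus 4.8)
The plan is to rescale the solutions $\psi_\vr$ around the concentration points $\xi_\vr$, extract a nontrivial limit spinor $z_\infty$ solving the limiting Euclidean equation \eqref{limit equ} with parameters frozen at $\xi_\infty$, and then squeeze: the energy of $z_\infty$ is at least $\ga(a(\xi_\infty),b(\xi_\infty))=\al(\xi_\infty)\ga(1,1)$ by Proposition \ref{euclidean prop}, while it is also bounded above by $\liminf_{\vr\to0}\ga_\vr\le\al_{min}\ga(1,1)$ via Corollary \ref{ga-vr leq}; since $\ga(1,1)>0$ this forces $\al(\xi_\infty)=\al_{min}$, i.e. $\xi_\infty\in\cc$.

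First I would set $\nu_\infty=a(\xi_\infty)$ and $\ka_\infty=b(\xi_\infty)$. Because $\xi_\vr\to\xi_\infty$, the coefficients $a\circ\Theta_\vr\to\nu_\infty$ and $b\circ\Theta_\vr\to\ka_\infty$ uniformly on every fixed ball $B_R^0$, and $\ig_{\Theta_\vr}\to\ig_{\R^m}$ in $C^\infty(B_R^0)$. Transporting $\cl_\vr'(\psi_\vr)=0$ through the identification \eqref{spinors z} and the conformal covariance of the Dirac operator (Proposition \ref{conformal formula}) shows that $z_\vr$ solves \emph{exactly}, on each $B_R^0$,
\[
\tilde D_{\ig_{\Theta_\vr}}z_\vr+(a\circ\Theta_\vr)\,\om_\C\cdot_{\ig_{\Theta_\vr}}z_\vr=(b\circ\Theta_\vr)\,|z_\vr|^{p-2}z_\vr.
\]
The uniform bound $\|\psi_\vr\|_\vr\le C$, carried through the $\vr$-rescaling, yields a uniform $W^{\frac12,2}(B_R^0)$-bound on $z_\vr$ for each fixed $R$; together with \eqref{z5} a diagonal extraction gives $z_\vr\rightharpoonup z_\infty$ in $W^{\frac12,2}_{loc}(\R^m)$ and $z_\vr\to z_\infty$ in $L^p_{loc}(\R^m)$ and in $L^2_{loc}(\R^m)$ (the local embeddings being compact since $p<m^*$). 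Lemma \ref{step1}, transported through the change of variables underlying \eqref{z4}, gives $\int_{B_{r_0}^0}|z_\infty|^2\,dx\ge\de_0>0$, so $z_\infty\neq0$; passing to the limit in the weak formulation of the displayed equation (all error terms being controlled exactly as in Lemma \ref{d-cl-vr to 0}, now read in the reverse direction) shows $z_\infty$ is a nontrivial solution of $\tilde D_{\ig_{\R^m}}z_\infty+\nu_\infty\om_\C\cdot z_\infty=\ka_\infty|z_\infty|^{p-2}z_\infty$, hence by Lemma \ref{reduction limit equ} $\Phi_{\nu_\infty\ka_\infty}(z_\infty)\ge\ga(\nu_\infty,\ka_\infty)$.

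For the energy comparison, since $\psi_\vr$ is a critical point, $\ga_\vr=\cl_\vr(\psi_\vr)=\frac{p-2}{2\vr^m p}\int_M b|\psi_\vr|^p\,d\vol_\ig$. Restricting this integral to $B_{\vr R}(\xi_\vr)$ and using \eqref{z4} with the uniform convergences above gives, for every fixed $R>r_0$,
\[
\liminf_{\vr\to0}\ga_\vr\ \ge\ \frac{p-2}{2p}\,\ka_\infty\int_{B_R^0}|z_\infty|^p\,dx,
\]
and letting $R\to\infty$,
\[
\liminf_{\vr\to0}\ga_\vr\ \ge\ \frac{p-2}{2p}\,\ka_\infty\int_{\R^m}|z_\infty|^p\,dx=\Phi_{\nu_\infty\ka_\infty}(z_\infty)\ \ge\ \ga(\nu_\infty,\ka_\infty),
\]
where the middle equality follows by testing the equation against $z_\infty$. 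On the other hand Corollary \ref{ga-vr leq} gives $\limsup_{\vr\to0}\ga_\vr\le\al_{min}\ga(1,1)$, and Proposition \ref{euclidean prop} identifies $\ga(\nu_\infty,\ka_\infty)=\al(\xi_\infty)\ga(1,1)$. Hence $\al(\xi_\infty)\ga(1,1)\le\al_{min}\ga(1,1)$, so $\al(\xi_\infty)\le\al_{min}=\min_M\al$, which forces $\al(\xi_\infty)=\al_{min}$, i.e. $\xi_\infty\in\cc$; therefore $\dist_\ig(\xi_\vr,\cc)\le\dist_\ig(\xi_\vr,\xi_\infty)\to0$ (and since this argument applies to every subsequence, the convergence holds for the full family).

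The main obstacle is the compactness package of the first step: producing a uniform, locally uniform-in-$R$ bound on $z_\vr$ to justify the diagonal weak limit, upgrading it to strong $L^p_{loc}$ and $L^2_{loc}$ convergence, and — crucially — using the quantitative non-vanishing of Lemma \ref{step1} to guarantee $z_\infty\neq0$, so that $z_\infty$ is a \emph{genuine} nontrivial critical point of $\Phi_{\nu_\infty\ka_\infty}$ rather than a possibly trivial weak limit. Everything after that is bookkeeping of energies through the scaling identities \eqref{z2}--\eqref{z4}.
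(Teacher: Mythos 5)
Your proposal is correct and follows essentially the same route as the paper: rescale via $\Theta_\vr$, extract a weak limit $z_\infty$ which is nontrivial by Lemma \ref{step1}, identify it as a solution of the frozen-coefficient Euclidean equation, bound the energy from below by $\ga(a(\xi_\infty),b(\xi_\infty))=\al(\xi_\infty)\ga(1,1)$ via Fatou, and squeeze against Corollary \ref{ga-vr leq}. The only point you pass over quickly—and which the paper makes explicit—is that to invoke the definition of $\ga(\nu_\infty,\ka_\infty)$ you need $z_\infty\in W^{\frac12,2}(\R^m,\tilde\mbs(\R^m))$ globally, which follows from elliptic regularity and the embedding $L^{\frac{p}{p-1}}\hookrightarrow W^{-\frac12,2}$; this is routine and does not affect the argument.
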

\begin{proof}
Since $\{z_\vr\}$ is $W^{\frac12,2}_{loc}(\R^m,\tilde\mbs(\R^m))$-bounded, that is,
$\{\bt z_\vr\}\subset W^{\frac12,2}(\R^m,\tilde\mbs(\R^m))$ is bounded
for any $\bt\in C_c^\infty(\R^m)$. We can assume, up to a subsequence,
$z_\vr\rightharpoonup z_\infty$ in $W^{\frac12,2}_{loc}(\R^m,\tilde\mbs(\R^m))$
and $z_\vr\to z_\infty$ in $L_{loc}^q(\R^m,\tilde\mbs(\R^m))$ for $2\leq q<\frac{2m}{m-1}$.
At the same time, by \eqref{z5}, we see easily that $z_\infty\in L^{p}(\R^m,\tilde\mbs(\R^m))$.

Let $\va\in W^{\frac12,2}(\R^m,\tilde\mbs(\R^m))$ be such that
$\supp \va$ is compact, i.e. $\supp \va\subset B_R^0$ for some $R$ large.
Then, we have
\[
\aligned
&\int_{\R^m}\big( \tilde D_{\ig_{\R^m}}z_\infty + a(\xi_\infty)\om_\C\cdot_{\ig_{\R^m}}z_\infty
-b(\xi_\infty)|z_\infty|^{p-2}z_\infty, \va \big)d\vol_{\ig_{\R^m}}  \\
&\qquad =\lim_{\vr\to0}\int_{\supp\va}\big( \tilde D_{\ig_{\Theta_\vr}}z_\vr
+(a\circ\Theta_\vr)\om_\C\cdot_{\ig_{\Theta_\vr}}z_\vr-(b\circ\Theta_\vr) |z_\vr|^{p-2}z_\vr, \va\big)
d\vol_{\ig_{\Theta_\vr}}  \\
&\qquad =\lim_{\vr\to0}\frac1{\vr^m}\int_{B_{\vr R}(\xi_\vr)}
\big( \vr \tilde D_\ig \psi_\vr+ a\om_\C\cdot_\ig \psi_\vr
-b|\psi_\vr|^{p-2}\psi_\vr, \ov{(\Theta_\vr)}_*\circ\va\circ\Theta_\vr^{-1} \big) d\vol_\ig\\
&\qquad =0
\endaligned
\]
Hence, we have $z_\infty$ satisfies
\begin{\equ}\label{z-infty}
\tilde D_{\ig_{\R^m}}z_\infty + a(\xi_\infty)\om_\C\cdot_{\ig_{\R^m}}z_\infty
=b(\xi_\infty)|z_\infty|^{p-2}z_\infty \quad \text{on } \R^m.
\end{\equ}
This implies, by the elliptic regularity, $\tilde D_{\ig_{\R^m}}z_\infty + a(\xi_\infty)\om_\C\cdot_{\ig_{\R^m}}z_\infty\in L^{\frac{p}{p-1}}(\R^m,\tilde\mbs(\R^m))$.
Moreover, combined with the Sobolev embedding
$L^{\frac{p}{p-1}}(\R^m,\tilde\mbs(\R^m))\hookrightarrow
W^{-\frac12,2}(\R^m,\tilde\mbs(\R^m))$, we get $z_\infty\in
W^{\frac12,2}(\R^m,\tilde\mbs(\R^m))$.

Now, by collecting \eqref{xx0}, \eqref{xx1} and \eqref{z2}-\eqref{z5}, we conclude that
$z_\infty$ is a nontrivial solution to \eqref{z-infty} and
\[
\aligned
\limsup_{\vr\to0}\cl_\vr(\psi_\vr)&=\limsup_{\vr\to0}
\frac{p-2}{2p\,\vr^m}\int_M b|\psi_\vr|^{p}d\vol_\ig  \\
&\geq\liminf_{\vr\to0}\frac{p-2}{2p}b(\xi_\infty)\int_{B_R^0}|z_\vr|^{p}d\vol_{\ig_{\Theta_\vr}}\\
&\geq\frac{p-2}{2p}b(\xi_\infty)\int_{B_R^0}|z_\infty|^{p}d\vol_{\ig_{\R^m}}
\endaligned
\]
where in the last inequality we have used the Fatou's lemma.
Due to the arbitrariness of $R>0$, combined with the results obtained in Section \ref{energy gap},
we have
\[
\limsup_{\vr\to0}\cl_\vr(\psi_\vr)\geq \ga\big(a(\xi_\infty),b(\xi_\infty)\big)=\al(\xi_\infty)\ga(1,1).
\]
Then Corollary \ref{ga-vr leq} suggests $\al(\xi_\infty)=\al_{min}$, i.e., $\xi_\infty\in\cc$, which
completes the proof.
\end{proof}

With Lemma \ref{step1} and \ref{step2} in hand, we may now choose $\bt\in C^\infty(M)$
be a cut-off function such that $\bt\equiv1$ on $B_r(\xi_\infty)$ and $\supp\bt\subset B_{2r}(\xi_\infty)$
for some $r>0$ and define spinors on $M$ as
\[
\zeta_\vr=\bt(\cdot)\ov{(\Theta_\vr)}_*\circ z_\infty \circ \Theta_\vr^{-1}.
\]
Setting $w_\vr=\psi_\vr-\zeta_\vr$, we soon have $|w_\vr|_{p,\vr}\to0$ as
$\vr\to0$ (otherwise, we can apply Lemma \ref{step1} and \ref{step2} for $\{w_\vr\}$ instead
of $\{\psi_\vr\}$ to get $\cl_\vr(\psi_\vr)\geq 2\al_{min}\ga(1,1)$ which is absurd).

\begin{Lem}\label{step3}
$\|\cl_\vr'(\zeta_\vr)\|_\vr\to0$ and $\|\cl_\vr'(w_\vr)\|_\vr\to0$ as $\vr\to0$.
\end{Lem}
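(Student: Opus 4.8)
The plan is to prove the two statements in turn. The first, $\|\cl_\vr'(\zeta_\vr)\|_\vr\to0$, is the exact analogue of Lemma~\ref{d-cl-vr to 0} with the test spinor $\phi_\vr$ there replaced by $\zeta_\vr$; the second follows formally from the first together with the relation $|w_\vr|_{p,\vr}\to0$ recorded just before the statement.

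For the first assertion, fix $\va\in\ch$, put $\tilde\va_\vr=\ov{(\Theta_\vr)}_*^{-1}\circ\va\circ\Theta_\vr$ and $\tilde\bt_\vr=\bt\circ\Theta_\vr$ on $B^0_{2r/\vr}$, and transplant $\cl_\vr'(\zeta_\vr)[\va]$ through $\ov{(\Theta_\vr)}_*$ using the conformal covariance of the Dirac operator (as in \eqref{X1}) and the volume change formula. Writing $\tilde D_{\ig_{\Theta_\vr}}(\tilde\bt_\vr z_\infty)=\nabla\tilde\bt_\vr\cdot_{\ig_{\Theta_\vr}}z_\infty+\tilde\bt_\vr\,\tilde D_{\ig_{\Theta_\vr}}z_\infty$ and subtracting a copy of $\tilde\bt_\vr$ times the limiting equation \eqref{z-infty} (which is identically zero), one decomposes $\cl_\vr'(\zeta_\vr)[\va]$ into five pieces paired against $\tilde\va_\vr$: a boundary term carrying $\nabla\tilde\bt_\vr$; a term measuring $\tilde D_{\ig_{\Theta_\vr}}z_\infty-\tilde D_{\ig_{\R^m}}z_\infty$; a term measuring $a\circ\Theta_\vr-a(\xi_\infty)$; a term measuring $b\circ\Theta_\vr-b(\xi_\infty)$; and a term carrying $\tilde\bt_\vr-\tilde\bt_\vr^{p-1}$. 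Each is estimated by H\"older's inequality and Lemma~\ref{embeding lemma}, using $|\tilde\va_\vr|_{q,\vr}\le c_q\|\va\|_\vr$, the uniform bound $d\vol_{\ig_{\Theta_\vr}}\le C\,d\vol_{\ig_{\R^m}}$ on $B^0_{2r/\vr}$, the fact $z_\infty\in L^2(\R^m)\cap L^p(\R^m)$ (since $z_\infty\in W^{\frac12,2}$, as shown in the proof of Lemma~\ref{step2}), and $\nabla z_\infty\in L^{\frac p{p-1}}(\R^m)$ (from \eqref{z-infty} and the $L^p$-theory for the Dirac operator, exactly as in Lemma~\ref{d-cl-vr to 0}). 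The boundary term and the $\tilde\bt_\vr-\tilde\bt_\vr^{p-1}$ term live on the annulus $\{r/\vr\le|x|\le 2r/\vr\}$, on which the $L^2$- resp.\ $L^p$-mass of $z_\infty$ is small and $|\nabla\tilde\bt_\vr|=O(\vr)$, and are handled just as $l_1$ and $l_4$ in Lemma~\ref{d-cl-vr to 0}.

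The genuinely new point — and the step I expect to cost the most effort — is that, unlike the shrinking cut-off $\eta_\vr$ of Lemma~\ref{d-cl-vr to 0}, the cut-off $\bt$ is fixed, so its transplanted support $B^0_{2r/\vr}$ grows and the convergences $\ig_{\Theta_\vr}\to\ig_{\R^m}$, $a\circ\Theta_\vr\to a(\xi_\infty)$, $b\circ\Theta_\vr\to b(\xi_\infty)$ hold only locally uniformly. I would absorb this with a two-scale estimate: split $B^0_{2r/\vr}=B^0_R\cup(B^0_{2r/\vr}\setminus B^0_R)$; on the fixed ball $B^0_R$ the metric- and coefficient-discrepancies tend to $0$ uniformly in $x$ as $\vr\to0$ while $z_\infty,\nabla z_\infty$ are controlled in $L^q(B^0_R)$; on the complement these discrepancies are merely bounded (uniform equivalence of $\ig_{\Theta_\vr}$ with the Euclidean metric over the fixed geodesic ball $B_{2r}(\xi_\vr)$), but $\|z_\infty\|_{L^q(\R^m\setminus B^0_R)}$ and $\|\nabla z_\infty\|_{L^{p/(p-1)}(\R^m\setminus B^0_R)}$ tend to $0$ as $R\to\infty$. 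Letting $\vr\to0$ first and then $R\to\infty$ disposes of every error term, so $\cl_\vr'(\zeta_\vr)[\va]=o_\vr(1)\|\va\|_\vr$ uniformly in $\va$, i.e.\ $\|\cl_\vr'(\zeta_\vr)\|_\vr\to0$.

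For the second assertion, since $\cl_\vr'(\psi)[\va]=\frac1{\vr^m}\real\int_M\big((A\psi,\va)-b|\psi|^{p-2}(\psi,\va)\big)\,d\vol_\ig$ and $A$ is linear, the identities $w_\vr=\psi_\vr-\zeta_\vr$ and $\cl_\vr'(\psi_\vr)=0$ give
\[
\cl_\vr'(w_\vr)[\va]=-\cl_\vr'(\zeta_\vr)[\va]+\frac1{\vr^m}\real\int_M b\big(|\psi_\vr|^{p-2}\psi_\vr-|\zeta_\vr|^{p-2}\zeta_\vr-|w_\vr|^{p-2}w_\vr,\va\big)\,d\vol_\ig .
\]
The first summand is $o_\vr(1)\|\va\|_\vr$ by the first part. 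For the second, insert $\psi_\vr=w_\vr+\zeta_\vr$ and use the elementary bound $\big||u+v|^{p-2}(u+v)-|u|^{p-2}u-|v|^{p-2}v\big|\le C_p\big(|u|^{p-2}|v|+|u|\,|v|^{p-2}\big)$, valid for $p\ge2$, with $u=w_\vr$ and $v=\zeta_\vr$; H\"older's inequality with exponents $(p,\frac p{p-2},p)$ and Lemma~\ref{embeding lemma} then bound it by $C\big(|w_\vr|_{p,\vr}\,|\zeta_\vr|_{p,\vr}^{p-2}+|w_\vr|_{p,\vr}^{p-2}\,|\zeta_\vr|_{p,\vr}\big)\|\va\|_\vr$. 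Since $\|\zeta_\vr\|_\vr$ is bounded uniformly in $\vr$ (the same transplant computation as in Lemma~\ref{cl to ga}, together with the smoothness and decay of $z_\infty$), the factor $|\zeta_\vr|_{p,\vr}\le c_p\|\zeta_\vr\|_\vr$ stays bounded while $|w_\vr|_{p,\vr}\to0$; as $p>2$ both bracketed terms are $o_\vr(1)$, whence $\|\cl_\vr'(w_\vr)\|_\vr\to0$. In short, essentially all the work is in the first assertion — and within it the two-scale bookkeeping against the decay of $z_\infty$ at infinity — while the second assertion is routine.
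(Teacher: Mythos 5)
Your proof of the first assertion is exactly what the paper intends: the paper disposes of $\|\cl_\vr'(\zeta_\vr)\|_\vr\to0$ in one sentence (``after some minor revision, the proof of Lemma \ref{d-cl-vr to 0} can be applied''), and your two-scale splitting $B^0_{2r/\vr}=B^0_R\cup(B^0_{2r/\vr}\setminus B^0_R)$ --- locally uniform convergence of $\ig_{\Theta_\vr}$, $a\circ\Theta_\vr$, $b\circ\Theta_\vr$ on the fixed ball played against the $L^q$-tails of $z_\infty$ and $\nabla z_\infty$ outside --- is precisely the ``minor revision'' needed because the cut-off $\bt$ is fixed rather than shrinking like $\eta_\vr$. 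For the second assertion you and the paper start from the same identity \eqref{ee1} and the same pointwise bound \eqref{ee2}, but then diverge: the paper estimates $\frac1{\vr^m}\real\int_M(\Psi_\vr,\va)\,d\vol_\ig$ by splitting $M$ into $B_{\vr R}(\xi_\vr)$ and its complement, using the strong convergence $z_\vr\to z_\infty$ in $L^{p}(B^0_R)$ inside and the decay of $\int_{|x|\geq R}|z_\infty|^{p}$ outside, whereas you apply H\"older globally and invoke $|w_\vr|_{p,\vr}\to0$. Your route is shorter, but it leans on a fact whose justification is delicate: the parenthetical argument for $|w_\vr|_{p,\vr}\to0$ given just before the lemma (``apply Lemmas \ref{step1} and \ref{step2} to $\{w_\vr\}$'') requires knowing that $\{w_\vr\}$ is an approximate critical sequence, i.e.\ the very conclusion $\|\cl_\vr'(w_\vr)\|_\vr\to0$ you are in the middle of proving. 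The paper's inside/outside estimate is designed to avoid this circularity --- it uses only $\cl_\vr'(\psi_\vr)=0$, the weak convergence $z_\vr\rightharpoonup z_\infty$, and the integrability of $z_\infty$. If you wish to keep the global H\"older argument, you should first establish $|w_\vr|_{p,\vr}\to0$ independently of the lemma (for instance by a Brezis--Lieb splitting of $\frac1{\vr^m}\int_M b|\psi_\vr|^{p}\,d\vol_\ig$ combined with Corollary \ref{ga-vr leq} and the lower bound obtained in Lemma \ref{step2}); otherwise, replace that step by the paper's two-region estimate. Everything else in your argument (the exponent bookkeeping in H\"older, the elementary inequality for $|u+v|^{p-2}(u+v)-|u|^{p-2}u-|v|^{p-2}v$, and the uniform bound on $\|\zeta_\vr\|_\vr$) is correct.
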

\begin{proof}
We point out that, after some minor revision, the proof of Lemma \ref{d-cl-vr to 0}
can be applied here to show $\|\cl_\vr'(\zeta_\vr)\|_\vr\to0$ as $\vr\to0$. Hence,
we only need to check the second estimate.

Again, we choose $\va\in\ch$ be an arbitrary test spinor. We then have
\begin{eqnarray}\label{ee1}
\cl_\vr'(w_\vr)[\va]&=&\frac1{\vr^m}\real\int_M(\vr\tilde D_\ig w_\vr+a\om_\C\cdot w_\vr
-b|w_\vr|^{p-2}w_\vr,\va)d\vol_\ig  \nonumber\\
&=&\cl_\vr'(\psi_\vr)[\va]-\cl_\vr'(\zeta_\vr)[\va]+\frac1{\vr^m}\real\int_M(\Psi_\vr,\va)d\vol_\ig,
\end{eqnarray}
where
\[
\Psi_\vr=b|\psi_\vr|^{p-2}\psi_\vr-b|\zeta_\vr|^{p-2}\zeta_\vr-b|w_\vr|^{p-2}w_\vr.
\]
Since $\cl_\vr'(\psi_\vr)=0$ for all $\vr$ small, it remains to estimate the last integral
in \eqref{ee1}.

To proceed, we first mention that there exists $C>0$ (independent of $\vr$) such that
\begin{\equ}\label{ee2}
|\Psi_\vr|\leq C|\zeta_\vr|^{p-2}|w_\vr|+C|w_\vr|^{p-2}|\zeta_\vr|.
\end{\equ}
Thus, for any $R>0$, we have
\[
\aligned
&\frac1{\vr^m}\int_{M\setminus B_{\vr R}(\xi_\vr)}|\zeta_\vr|^{p-2}\cdot|w_\vr|\cdot|\va|d\vol_\ig \\
&\qquad \leq \Big( \frac1{\vr^m}\int_{M\setminus B_{\vr R}(\xi_\vr)}|\zeta_\vr|^{p} d\vol_\ig
\Big)^{\frac{p-2}{p^*}}
\Big( \frac1{\vr^m}\int_{M\setminus B_{\vr R}(\xi_\vr)}|w_\vr|^{p}d\vol_\ig \Big)^{\frac1{p}}
|\va|_{p,\vr}  \\
&\qquad \leq C \Big( \int_{B_{2r/\vr}^0\setminus B_R^0}|z_\infty|^{p}d\vol_{\ig_{\Theta_\vr}}
\Big)^{\frac{p-2}{p}} \|w_\vr\|_\vr\cdot\|\va\|_\vr = o_R(1)\|\va\|_\vr
\endaligned
\]
and
\[
\aligned
&\frac1{\vr^m}\int_{M\setminus B_{\vr R}(\xi_\vr)}|w_\vr|^{p-2}\cdot|\zeta_\vr|\cdot|\va|d\vol_\ig \\
&\qquad \leq \Big( \frac1{\vr^m}\int_{M\setminus B_{\vr R}(\xi_\vr)}|w_\vr|^{p} d\vol_\ig
\Big)^{\frac{p-2}{p}}
\Big( \frac1{\vr^m}\int_{M\setminus B_{\vr R}(\xi_\vr)}|\zeta_\vr|^{p}d\vol_\ig \Big)^{\frac1{p}}
|\va|_{p,\vr}  \\
&\qquad \leq C \Big( \int_{B_{2r/\vr}^0\setminus B_R^0}|z_\infty|^{p}d\vol_{\ig_{\Theta_\vr}}
\Big)^{\frac1{p}} \|w_\vr\|_\vr^{p-2}\cdot\|\va\|_\vr = o_R(1)\|\va\|_\vr,
\endaligned
\]
where $o_R(1)\to0$ as $R\to\infty$.
At the same time, inside $B_{\vr R}(\xi_\vr)$, we have
\[
\aligned
&\frac1{\vr^m}\int_{B_{\vr R}(\xi_\vr)}|\zeta_\vr|^{p-2}\cdot|w_\vr|\cdot|\va|d\vol_\ig \\
&\qquad \leq \Big( \frac1{\vr^m}\int_{B_{\vr R}(\xi_\vr)}|\zeta_\vr|^{p}d\vol_\ig \Big)^{\frac{p-2}{p}}
\Big( \frac1{\vr^m}\int_{B_{\vr R}(\xi_\vr)}|w_\vr|^{p}d\vol_\ig \Big)^{\frac1{p}} |\va|_{p,\vr}\\
&\qquad \leq C\Big( \int_{\R^m}|z_\infty|^{p} d\vol_{\ig_{\R^m}}\Big)^{\frac{p-2}{p}}
\Big( \int_{B_R^0}|z_\vr-z_\infty|^{p}d\vol_{\ig_{\R^m}}\Big)^{\frac1{p}}\cdot\|\va\|_\vr
=o_\vr(1)\|\va\|_\vr
\endaligned
\]
and
\[
\aligned
&\frac1{\vr^m}\int_{B_{\vr R}(\xi_\vr)}|w_\vr|^{p-2}\cdot|\zeta_\vr|\cdot|\va|d\vol_\ig \\
&\qquad \leq \Big( \frac1{\vr^m}\int_{B_{\vr R}(\xi_\vr)}|w_\vr|^{p}d\vol_\ig \Big)^{\frac{p-2}{p}}
\Big( \frac1{\vr^m}\int_{B_{\vr R}(\xi_\vr)}|\zeta_\vr|^{p}d\vol_\ig \Big)^{\frac1{p}} |\va|_{p,\vr}\\
&\qquad \leq C\Big( \int_{B_R^0}|z_\vr-z_\infty|^{p} d\vol_{\ig_{\R^m}}\Big)^{\frac{p-2}{p}}
\Big( \int_{\R^m}|z_\infty|^{p}d\vol_{\ig_{\R^m}}\Big)^{\frac1{p}}\cdot\|\va\|_\vr
=o_\vr(1)\|\va\|_\vr
\endaligned
\]
as $\vr\to0$, where we have used  $z_\vr\rightharpoonup z_\infty$ in
$W^{\frac12,2}_{loc}(\R^m,\tilde\mbs(\R^m))$ and the compact Sobolev embedding
$W^{\frac12,2}_{loc}(\R^m,\tilde\mbs(\R^m))\hookrightarrow L^{p}_{loc}(\R^m,\tilde\mbs(\R^m))$.

Therefore, we can conclude that
\[
\frac1{\vr^m}\real\int_M(\Psi_\vr,\va)d\vol_\ig=o_\vr(1)\|\va\|_\vr
\quad \text{as } \vr\to0.
\]
And hence, by \eqref{ee1}, we have $\|\cl_\vr'(w_\vr)\|_\vr\to0$ as $\vr\to0$.
\end{proof}

At this point, we have the following result which summarizes the concentration
phenomenon of the family $\{\psi_\vr\}$ for the model problem \eqref{equ3}.

\begin{Prop}\label{concentration}
Let $\{\psi_\vr\}$ be the family of solutions to \eqref{equ3} found by \eqref{ga-vr}.
Then there exist a convergent sequence $\{\xi_\vr\}\subset M$, $\xi_\vr\to\xi_\infty$
as $\vr\to0$ and a non-trivial solution $z_\infty$ of Eq. \eqref{z-infty} such that
\[
\al(\xi_\infty)=\al_{min}
\]
and
\begin{\equ}\label{xx}
\psi_\vr=\bt(\cdot)\ov{(\Theta_\vr)}_*\circ z_\infty \circ \Theta_\vr^{-1} + w_\vr
\quad \text{in } \ch
\end{\equ}
where $\|w_\vr\|_\vr\to0$ as $\vr\to0$, $\Theta_\vr(x)=\exp_{\xi_\vr}(\vr x)$
and $\bt\in C^\infty(M)$ is a cut-off function such that $\bt\equiv1$ on $B_r(\xi_\infty)$
and $\supp\bt\subset B_{2r}(\xi_\infty)$, some $r>0$. Moreover, there holds
\[
\lim_{\vr\to0}\ga_\vr=\lim_{\vr\to0}\cl_\vr(\psi_\vr)=\al_{min}\ga(1,1).
\]
\end{Prop}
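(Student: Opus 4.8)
The plan is to assemble the statement from the lemmas already in hand, the only genuinely new computation being the upgrade from $L^p$-smallness of the remainder to smallness in the energy space $\ch$. First I would recall that Lemma \ref{step1} produces the sequence $\{\xi_\vr\}\subset M$ together with the lower mass bound; passing to a subsequence gives $\xi_\vr\to\xi_\infty$, and the weak limit $z_\infty$ of the rescaled spinors $z_\vr$ of \eqref{spinors z} is, as established inside the proof of Lemma \ref{step2}, a nontrivial solution of \eqref{z-infty}, while Lemma \ref{step2} gives $\al(\xi_\infty)=\al_{min}$ (i.e. $\xi_\infty\in\cc$). Set $\zeta_\vr=\bt(\cdot)\,\ov{(\Theta_\vr)}_*\circ z_\infty\circ\Theta_\vr^{-1}$ and $w_\vr=\psi_\vr-\zeta_\vr$. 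As noted just before Lemma \ref{step3}, $|w_\vr|_{p,\vr}\to0$ (otherwise, running Lemmas \ref{step1} and \ref{step2} on $\{w_\vr\}$ in place of $\{\psi_\vr\}$ would force $\cl_\vr(\psi_\vr)\ge 2\al_{min}\ga(1,1)$, contradicting Corollary \ref{ga-vr leq}), and Lemma \ref{step3} gives $\|\cl_\vr'(w_\vr)\|_\vr\to0$. What remains is to deduce $\|w_\vr\|_\vr\to0$ and to identify $\lim_{\vr\to0}\ga_\vr$.

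For the first point I would test $\cl_\vr'(w_\vr)$ against $w_\vr^+-w_\vr^-$, where $w_\vr^\pm=\psi_\vr^\pm-\zeta_\vr^\pm$ are the $\inp{\cdot}{\cdot}_\vr$-orthogonal parts of $w_\vr$. Since
\[
\cl_\vr'(w_\vr)[w_\vr^+-w_\vr^-]=\|w_\vr\|_\vr^2-\frac1{\vr^m}\real\int_M b\,|w_\vr|^{p-2}(w_\vr,w_\vr^+-w_\vr^-)\,d\vol_\ig,
\]
and the integral is bounded, by H\"older and Lemma \ref{embeding lemma} (whose embedding constant does not depend on $\vr$), by $c_p\max b\,|w_\vr|_{p,\vr}^{p-1}\,\|w_\vr\|_\vr$, one obtains $\|w_\vr\|_\vr^2\le\big(\|\cl_\vr'(w_\vr)\|_\vr+c_p\max b\,|w_\vr|_{p,\vr}^{p-1}\big)\|w_\vr\|_\vr$, hence $\|w_\vr\|_\vr\to0$. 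This is precisely \eqref{xx}. (That $\{w_\vr\}$ is $\ch$-bounded, used implicitly, is clear: $\{\psi_\vr\}$ is bounded because $\ga_\vr$ is bounded and $\cl_\vr'(\psi_\vr)=0$, as in Lemma \ref{PS-condition}, and $\|\zeta_\vr\|_\vr\le C$ as in Lemma \ref{cl to ga}.)

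For the energy, the bound $\limsup_{\vr\to0}\ga_\vr\le\al_{min}\ga(1,1)$ is Corollary \ref{ga-vr leq}. For the reverse inequality I would repeat the estimate in the proof of Lemma \ref{step2}, now keeping $\liminf$ throughout: from
\[
\ga_\vr=\cl_\vr(\psi_\vr)=\frac{p-2}{2p\,\vr^m}\int_M b\,|\psi_\vr|^p\,d\vol_\ig\ \ge\ \frac{p-2}{2p}\int_{B_R^0}(b\circ\Theta_\vr)\,|z_\vr|^p\,d\vol_{\ig_{\Theta_\vr}},
\]
together with the strong $L^p_{loc}$-convergence $z_\vr\to z_\infty$ (recall $p<m^*$), $b\circ\Theta_\vr\to b(\xi_\infty)$ and $\ig_{\Theta_\vr}\to\ig_{\R^m}$, one gets $\liminf_{\vr\to0}\ga_\vr\ge\frac{p-2}{2p}b(\xi_\infty)\int_{B_R^0}|z_\infty|^p\,d\vol_{\ig_{\R^m}}$ for every $R$. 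Letting $R\to\infty$, and noting that since $z_\infty$ solves \eqref{z-infty} the right-hand side equals the energy of $z_\infty$ for the corresponding Euclidean functional, this last quantity is $\ge\ga\big(a(\xi_\infty),b(\xi_\infty)\big)=\al(\xi_\infty)\ga(1,1)=\al_{min}\ga(1,1)$ by Proposition \ref{euclidean prop} and Lemma \ref{step2}. Combining the two bounds yields $\lim_{\vr\to0}\ga_\vr=\lim_{\vr\to0}\cl_\vr(\psi_\vr)=\al_{min}\ga(1,1)$.

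Since the bubbling analysis was already carried out in Lemmas \ref{step1}--\ref{step3}, I do not anticipate a serious obstacle here; the main point requiring care is the $\vr$-uniformity of the constants in the estimate for $w_\vr$, which is exactly what Lemma \ref{embeding lemma} provides, so that the smallness of $\|\cl_\vr'(w_\vr)\|_\vr$ and of $|w_\vr|_{p,\vr}$ can be converted into smallness of $\|w_\vr\|_\vr$.
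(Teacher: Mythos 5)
Your proposal is correct and follows essentially the same route as the paper: the decisive step of testing $\cl_\vr'(w_\vr)$ against $w_\vr^+-w_\vr^-$ and using the $\vr$-uniform embedding constant of Lemma \ref{embeding lemma} to convert $|w_\vr|_{p,\vr}\to0$ and $\|\cl_\vr'(w_\vr)\|_\vr\to0$ into $\|w_\vr\|_\vr\to0$ is exactly the paper's argument. Your explicit $\liminf$ rerun of the Lemma \ref{step2} computation to pin down $\lim_{\vr\to0}\ga_\vr=\al_{min}\ga(1,1)$ is a small but genuine improvement in completeness, since the paper leaves that final identity to be read off from Lemma \ref{step2} and Corollary \ref{ga-vr leq}.
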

\begin{proof}
By collecting Lemmas \ref{step1}-\ref{step2}, it remains to show that $\|w_\vr\|_\vr\to0$
as $\vr\to0$ in \eqref{xx}.

Since $\|\cl_\vr'(w_\vr)\|_\vr\to0$ is
already suggested by Lemma \ref{step3}, we soon have
\[
\aligned
\|w_\vr\|_\vr&\leq \frac1{\vr^m}\int_M b|w_\vr|^{p-1}|w_\vr^+-w_\vr^-|d\vol_\ig + o_\vr(1) \\
&\leq \Big(\frac1{\vr^m}\int_M b |w_\vr|^{p}d\vol_\ig \Big)^{\frac{p-1}{p}}
\Big(\frac1{\vr^m}\int_M b |w_\vr^+-w_\vr^-|^{p}d\vol_\ig \Big)^{\frac1{p}}+o_\vr(1) \\
&\leq C |w_\vr|_{p,\vr} \|w_\vr\|+o_\vr(1)
\endaligned
\]
as $\vr\to0$. As was remarked before Lemma \ref{step3}, we have $|w_\vr|_{p,\vr}\to0$.
Thus, we can infer $\|w_\vr\|_\vr\to0$ as $\vr\to0$.
\end{proof}

The above proposition yields a description of the profiles of the solutions to our
original problem \eqref{equ2}. Namely, we can simply substitute $m=m_1$, $p=n^*=\frac{2(m_1+m_2)}{m_1+m_2-1}$
\[
a=\lm\theta \quad \text{and} \quad b=\theta^{m_1-\frac{m_1-1}2n^*}
\]
into Eq. \eqref{equ3} and calculate the potential function as
\[
\al=a^{-(m_1-1)+\frac2{n^*-2}}\,b^{-\frac2{n^*-2}}\equiv \lm^{m_2} \quad \text{on } M_1.
\]
Applying the same argument that we have done
previously, we obtain our main result as a corollary of Proposition \ref{concentration}.
(cf. \cite[Chapter 3]{Ammann} for regularity results of Dirac operators).

\begin{Thm}\label{result for n geq 3}
There exists $\vr_0>0$ such that, for any $\vr\in(0,\vr_0)$, Eq. \eqref{equ2}
has a solution $\psi_\vr\in C^1(M_1,\tilde\mbs(M_1))\cap C^\infty(M_1\setminus\psi_\vr^{-1}(0), \tilde\mbs(M_1))$. Furthermore, there exist a convergent sequence
$\{\xi_\vr\}\subset M_1$, $\xi_\vr\to\xi_0$
as $\vr\to0$ and a non-trivial solution $z_0$ of
\[
\tilde D_{\ig_{\mbox{\tiny $\R^{m_1}$}}}z+ \lm\theta(\xi_0)\om_\C\cdot_{\ig_{\mbox{\tiny $\R^{m_1}$}}}
z=\theta(\xi_0)^{m_1-\frac{m_1-1}2n^*}|z|^{n^*-2}z
\quad \text{on } \R^{m_1}
\]
such that
\[
\psi_\vr=\bt(\cdot)\ov{(\Theta_\vr)}_*\circ z_0 \circ \Theta_\vr^{-1} + w_\vr
\quad \text{in } \ch
\]
where $\|w_\vr\|_\vr\to0$ as $\vr\to0$, $\Theta_\vr(x)=\exp_{\xi_\vr}(\vr x)$
and $\bt\in C^\infty(M_1)$ is a cut-off function such that $\bt\equiv1$ on $B_r(\xi_\infty)$
and $\supp\bt\subset B_{2r}(\xi_\infty)$, some $r>0$. Moreover, there holds
\[
\lim_{\vr\to0}\frac1{\vr^{m_1}}\int_{M_1} \theta^{m_1-\frac{m_1-1}2n^*}|\psi_\vr|^{n^*}
d\vol_\ig=2n\cdot\lm^{m_2}\cdot\ga(1,1).
\]
\end{Thm}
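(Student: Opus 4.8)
The plan is to obtain this theorem as a direct corollary of the analysis of the model problem \eqref{equ3}, via the substitution already announced at the end of Section \ref{analysis case1}. Concretely, I would set $m=m_1$, $p=n^*=\frac{2(m_1+m_2)}{m_1+m_2-1}$, $a=\lm\theta$ and $b=\theta^{\,m_1-\frac{m_1-1}2n^*}$ in \eqref{equ3}. Since $\theta$ is a fixed positive function of class $C^1$ on $M_1$, the coefficients $a$ and $b$ are positive and $C^1$, and since $n=m_1+m_2>m_1$ the exponent satisfies $2<n^*<m_1^*=\frac{2m_1}{m_1-1}$, so the whole functional framework of Sections \ref{analysis case1}--\ref{Bubble analysis} applies verbatim. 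In particular, Proposition \ref{reduction1}$(3)$ produces, for every $\vr\in(0,\vr_0)$, a nontrivial critical point $\psi_\vr$ of $\cl_\vr$ realizing the min-max level $\ga_\vr$ of \eqref{ga-vr}, which is a weak solution of \eqref{equ2}; the stated regularity $\psi_\vr\in C^1(M_1,\tilde\mbs(M_1))\cap C^\infty(M_1\setminus\psi_\vr^{-1}(0),\tilde\mbs(M_1))$ then follows from the standard elliptic bootstrap for Dirac operators with a nonlinearity which is smooth away from the origin and H\"older continuous at it (cf. \cite[Chapter 3]{Ammann}).

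The one computation to carry out is that of the potential $\al=a^{-(m_1-1)+\frac2{n^*-2}}\,b^{-\frac2{n^*-2}}$ attached to this choice of $a,b$. Since $n^*-2=\frac2{n-1}$, one has $\frac2{n^*-2}=n-1$, so the exponent of $a$ is $-(m_1-1)+(n-1)=m_2$ and that of $b$ is $-(n-1)$; moreover a one-line simplification gives $m_1-\frac{m_1-1}2n^*=\frac{m_2}{n-1}$, hence $b=\theta^{m_2/(n-1)}$ and $b^{-(n-1)}=\theta^{-m_2}$, while $a^{m_2}=\lm^{m_2}\theta^{m_2}$. The powers of $\theta$ cancel and one obtains $\al\equiv\lm^{m_2}$ on $M_1$; in particular $\al_{\min}=\lm^{m_2}$ and the minimum set $\cc$ is all of $M_1$.

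With this in hand, Proposition \ref{concentration} applies directly and gives a convergent sequence $\xi_\vr\to\xi_0\in M_1$ and a nontrivial solution $z_0:=z_\infty$ of \eqref{z-infty}, which for the present $a,b$ is exactly $\tilde D_{\ig_{\R^{m_1}}}z_0+\lm\theta(\xi_0)\om_\C\cdot_{\ig_{\R^{m_1}}}z_0=\theta(\xi_0)^{m_1-\frac{m_1-1}2n^*}|z_0|^{n^*-2}z_0$ on $\R^{m_1}$, together with the splitting $\psi_\vr=\bt(\cdot)\ov{(\Theta_\vr)}_*\circ z_0\circ\Theta_\vr^{-1}+w_\vr$ with $\|w_\vr\|_\vr\to0$ and with $\lim_{\vr\to0}\ga_\vr=\al_{\min}\ga(1,1)=\lm^{m_2}\ga(1,1)$. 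Finally, the quantization formula is just a rescaling of this energy limit: from the Nehari-type identity \eqref{xx1} with $p=n^*$ one has $\ga_\vr=\frac{n^*-2}{2n^*}\cdot\frac1{\vr^{m_1}}\int_{M_1}\theta^{\,m_1-\frac{m_1-1}2n^*}|\psi_\vr|^{n^*}\,d\vol_\ig$, and since $n^*-2=\frac2{n-1}$ and $2n^*=\frac{4n}{n-1}$ we have $\frac{n^*-2}{2n^*}=\frac1{2n}$, whence $\frac1{\vr^{m_1}}\int_{M_1}\theta^{\,m_1-\frac{m_1-1}2n^*}|\psi_\vr|^{n^*}\,d\vol_\ig=2n\,\ga_\vr\to 2n\cdot\lm^{m_2}\cdot\ga(1,1)$.

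There is no genuine obstacle at this stage: all the analytic substance --- the $\vr$-uniform embedding of Lemma \ref{embeding lemma}, the Lyapunov--Schmidt reduction, and above all the bubbling analysis behind Proposition \ref{concentration} --- has already been established. The only points requiring care are the exponent bookkeeping collapsing $\al$ to the constant $\lm^{m_2}$ (done above) and faithfully tracking the rescalings $\psi\mapsto\ell^{-(m_1-1)/2}\psi$ and $\vr=\ell^{-1}$ from Section \ref{analysis case1}, so that a solution of \eqref{equ2} on $M_1$ indeed corresponds, through $\phi=\psi\otimes\va_\lm$, to a solution of \eqref{reduced-Dirac} and therefore of the spinorial Yamabe equation \eqref{Dirac-product}.
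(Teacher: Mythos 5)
Your proposal is correct and is essentially the paper's own argument: the paper likewise obtains the theorem by substituting $m=m_1$, $p=n^*$, $a=\lm\theta$, $b=\theta^{m_1-\frac{m_1-1}2n^*}$ into the model problem \eqref{equ3}, computing $\al\equiv\lm^{m_2}$, and invoking Proposition \ref{concentration} together with the regularity theory in \cite[Chapter 3]{Ammann}. Your exponent bookkeeping ($\frac2{n^*-2}=n-1$, $b=\theta^{m_2/(n-1)}$, $\frac{n^*-2}{2n^*}=\frac1{2n}$) correctly fills in the computations the paper leaves implicit, including the derivation of the quantization formula from \eqref{xx1}.
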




\section{Application: CMC immersions for Unduloids}\label{S1-bundles}

In this section, comparing with that we have considered previously,
we will consider the simplest case of the product construction.


%

For a given positive $\ell$, we can choose a parameter $t$ on $S^1$ via the
identification $S^1=\R/2\pi\ell\Z$. And from now on, $\ell S^1$ will stand
for this parametrization. Then spinors on $\ell S^1$ can be viewed as complex
vector functions on $\R$ which are periodic and such that $2\pi\ell$ is a period.
Particularly, on $N=\ell S^1\times S^1$, we may write $\ig=dt^2\op d\tau^2$,
where $d\tau^2$ is the standard metric on the second factor with total length $2\pi$.

As was shown in Section \ref{algebraic settings}, one dimension spinor space
is simply $\C$ and the Clifford multiplication by positively oriented unit vector
is multiplication by $i$. With respect to the coordinates $t$ of $\ell S^1$ and $\tau$
of $S^1$, the Dirac operator on $M$  can be written as
\[
D_{\ig}\psi=i\Big( \frac{d}{dt}\psi_1\op -\frac{d}{dt}\psi_2\Big)\otimes\va
-(\psi_2\op -\psi_1)\otimes \frac{d}{d\tau}\va
\]
for all $\psi=(\psi_1\op\psi_2)\otimes\va \in\Ga(\mbs(N))$
where the spinor bundle of $N$ is
\[
\mbs(N)=\big( \mbs(\ell S^1)\op \mbs(\ell S^1)\big)\otimes \mbs(S^1)\cong \C^2.
\]

We point out that $S^1$ has two different spin structures. Recall that a spin
structure is a two-fold covering of the frame bundle $P_{SO}(S^1)$.
Hence we can either take the trivial covering $\sa_1:S^1\times\Z_2\to S^1$
given by two copies of the identity or we may take $\sa_2:S^1\to S^1$ via the mapping
$z\mapsto z^2$ in complex notation. Furthermore, it is interesting to see that
eigenvalues and eigenspinors can be explicitly computed in both structures since
a Fourier decomposition is available in this situation. And particularly, in each spin structure, all the eigenspinors are of constant length. To give an idea of our results, we simply consider the first positive eigenvalue of the Dirac operator $D_{\ig_{S^1}}=i\frac{d}{d\tau}$ on $(S^1,\ig_{S^1},\sa)$, i.e.
\[
D_{\ig_{S^1}}\va_{S^1}=\lm_1\va_{S^1}
\]
with $\lm_1=1$, $\va_{S^1}=e^{-i\tau}$ if $\sa=\sa_1$ or $\lm_1=\frac12$, $\va_{S^1}=e^{-i\tau/2}$
if $\sa=\sa_2$. The proof remains the same if one considers other eigenvalues.

Substituting $\psi=(\psi_1\op\psi_2)\otimes\va_{S^1}$ into the conformal invariant
equation
\begin{\equ}\label{eqs0}
D_{\ig}\psi=|\psi|^2\psi \quad \text{on } N=\ell S^1\times S^1,
\end{\equ}
we are led to an equivalent system of equations
\begin{\equ}\label{eqs1}
\left\{
\aligned
i\frac{d}{dt}\psi_1+i\lm_1\psi_2&=\big( |\psi_1|^2+|\psi_2|^2 \big)\psi_1 \\
-i\frac{d}{dt}\psi_2-i\lm_1\psi_1&=\big( |\psi_1|^2+|\psi_2|^2 \big)\psi_2
\endaligned \right.
\end{\equ}
where $\psi_1,\psi_2:\R/2\pi\ell\Z\to\C$.

Notice that we can write $\psi_1=u_1+iv_1$ and $\psi_2=u_2+iv_2$ for real
functions $u_1,u_2,v_1,v_2$. And moreover, Eq. \eqref{eqs1} is invariant under the multiplication
by $e^{i\vartheta}$ for $\vartheta\in[0,2\pi]$ and the complex conjugation.
Therefore, Eq. \eqref{eqs1} is equivalent to
\begin{\equ}\label{eqs2}
\left\{
\aligned
u'+\lm_1 u&=2(u^2+v^2)v \\
-v'+\lm_1 v&= 2(u^2+v^2)u
\endaligned \right.
\end{\equ}
where $u=u_1=u_2, \, v=v_1=-v_2: \R/2\pi\ell\Z\to(0,\infty)$. Evidently,
Eq. \eqref{eqs2} has an "obvious" constant solution $u=v=\frac{\sqrt{\lm_1}}2$
for all $\ell>0$.

From now on, we are intend to look for non-constant periodic solutions
$u,v$ for Eq. \eqref{eqs2}. Setting $f=2u^2+2v^2$ and $g=2u^2-2v^2$,
we have $uv=\frac{\sqrt{f^2-g^2}}4$ and Eq. \eqref{eqs2} becomes
\begin{\equ}\label{eqs3}
\left\{
\aligned
&g=-\frac1{2\lm_1}f',\\
&2gg'-2ff'=-\frac{2f}{\lm_1}f'\sqrt{f^2-g^2}.
\endaligned \right.
\end{\equ}
After multiplication by $\frac12(f^2-g^2)^{-\frac12}$ in the second equation,
we have
\[
\frac{d}{dt}\big( \sqrt{f^2-g^2}\big)=\frac{d}{dt}\Big(\frac1{2\lm_1}f^2\Big).
\]
Thus, for any solutions $f$ and $g$, there exists a constant $K$ such that
$\sqrt{f^2-g^2}=\frac1{2\lm_1}f^2+K$, that is,
\begin{\equ}\label{eqs4}
g^2=f^2-\Big( \frac1{2\lm_1}f^2+K \Big)^2 \quad \text{and} \quad
\frac1{2\lm_1}f^2+K\geq0.
\end{\equ}

For $K\in\R$, let us denote
\[
F_K(s)=s^2-\Big(\frac1{2\lm_1}s^2+K\Big)^2 \quad \text{for } s\geq0.
\]
Remark that, due to the geometric meaning of Eq. \eqref{eqs0}, the function $f$
defines a conformal metric $\tilde\ig=f^2\ig$ on $\ell S^1\times S^1$.
And this, together with the first equation in \eqref{eqs3}, implies $F_K$
should vanish twice on at some points $s_0,s_1>0$. Thus, the condition on
$K$ is particularly restrictive. In fact, the only possible range is
$K\in(0,\frac{\lm_1}2]$. And, if $K=\frac{\lm_1}2$, we have $f\equiv \lm_1$
and $g\equiv0$ (which correspond exactly the constant solution $u=v=\frac{\sqrt{\lm_1}}2$).

Let $K\in(0,\frac{\lm_1}2)$, and take $0<s_0<s_1$ be the points such that
$F_K$ vanishes. Then the function $F_K$ is positive on the interval $(s_0,s_1)$.
And Eq. \eqref{eqs4} is now equivalent to
\[
\frac{df}{2\lm_1\sqrt{F_K(f)}}=\pm dt,
\]
that is $\eta_K(f)=\pm t + c$, where
\[
\eta_K(f)=\int_{s_0}^{f}\frac{ds}{2\lm_1\sqrt{F_K(s)}}.
\]
Of course, $\eta_K$ is defined on the interval $(s_0,s_1)$. By noting that
$s_0$ and $s_1$ are simple roots of $F_K$, we have $\eta_K$ is well-defined.
Moreover, we have $\eta_K'(s)=\frac1{2\lm_1\sqrt{F_K(s)}}>0$ and
$\eta_K'(s)\to+\infty$ as $s\to s_0$ or $s_1$. Hence $\eta_K$ has an inverse
$\eta_K^{-1}$ which increases from $s_0$ to $s_1$ on the interval
$[0,\eta_K(s_1)]$. And solutions to \eqref{eqs4} can be given by
$f(t)=\eta_K^{-1}(\pm t +c)$ for $c\in\R$.

Setting
\[
f_K(t)=\left\{
\aligned
&\eta_K^{-1}(t) & & t\in[0,\eta_K(s_1)], \\
&\eta_K^{-1}(-t) & & t\in[-\eta_K(s_1),0],
\endaligned \right.
\]
it follows that the positive periodic solutions of Eq. \eqref{eqs3}
can be characterized by $f_K(t+c)$ which is a $2\eta_K(s_1)$-periodic
function. We are looking for solutions having period $2\pi\ell$, that is, functions
whose smallest positive period is of the form $2\pi\ell/k$ for some $k\in\N$. Thus
the non-constant solutions of our problem are functions $f_{K,c}:t\mapsto f_K(t+c)$
for which there exists $k\in\N$ such that
\begin{\equ}\label{period}
\eta_K(s_1)=\frac{\pi\ell}{k}.
\end{\equ}

\begin{Lem}\label{etaK}
$\eta_K(s_1)$ decreases with respect to the factor $K\in(0,\frac{\lm_1}2)$. Particularly,
\[
\lim_{K\to0}\eta_K(s_1)\to+\infty \quad \text{and} \quad
\lim_{K\to\frac{\lm_1}2}\eta_K(s_1)=\frac{\pi}{2\lm_1}.
\]
\end{Lem}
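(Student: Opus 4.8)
The plan is to reduce $\eta_K(s_1)$ to a one--parameter integral and then study that integral by differentiating in the parameter. First I would compute the roots of $F_K$ explicitly: for $s>0$, $F_K(s)=0$ is equivalent to $s=\frac1{2\lm_1}s^2+K$, i.e. $s^2-2\lm_1s+2\lm_1K=0$, so
\[
s_0=\lm_1-a,\qquad s_1=\lm_1+a,\qquad a:=\sqrt{\lm_1^2-2\lm_1K},
\]
and $a$ is a strictly decreasing bijection of $\big(0,\tfrac{\lm_1}2\big)$ onto $(0,\lm_1)$; hence it is enough to prove the statements with $K$ replaced by $a$. Factoring $F_K(s)=\big(s-\tfrac1{2\lm_1}s^2-K\big)\big(s+\tfrac1{2\lm_1}s^2+K\big)=\tfrac1{2\lm_1}(s-s_0)(s_1-s)\big(s+\tfrac1{2\lm_1}s^2+K\big)$ and substituting $s=\lm_1+a\cos\phi$, $\phi\in[0,\pi]$ (so that $(s-s_0)(s_1-s)=a^2\sin^2\phi$, $ds=-a\sin\phi\,d\phi$, and, using $2\lm_1K=\lm_1^2-a^2$, the last factor becomes $2\lm_1+2a\cos\phi-\tfrac{a^2}{2\lm_1}\sin^2\phi$), I expect to arrive at the closed form
\[
\eta_K(s_1)=\Theta(a):=\int_0^\pi\frac{d\phi}{\sqrt{(2\lm_1+a\cos\phi)^2-a^2}}.
\]
The denominator equals $(2\lm_1-a+a\cos\phi)(2\lm_1+a+a\cos\phi)$, which is bounded below by a positive constant for $a\in(0,\lm_1)$, so the integrand is smooth on $[0,\pi]$.

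The main step, and the one I expect to be the real obstacle, is to show $\Theta'(a)>0$ on $(0,\lm_1)$; this yields that $\eta_K(s_1)$ is strictly decreasing in $K$. Differentiating under the integral sign (legitimate since the integrand and its $a$--derivative are continuous on $[0,\pi]$, uniformly on compact subintervals of $(0,\lm_1)$), one gets
\[
\Theta'(a)=a\int_0^\pi\frac{\sin^2\phi\,d\phi}{\big[(2\lm_1+a\cos\phi)^2-a^2\big]^{3/2}}-2\lm_1\int_0^\pi\frac{\cos\phi\,d\phi}{\big[(2\lm_1+a\cos\phi)^2-a^2\big]^{3/2}}.
\]
The first term is positive, so it suffices to show the second integral is $\le0$. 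A pointwise bound will not work, since the integrand of $\Theta$ is not monotone in $a$ (its $a$--derivative has opposite signs near $\phi=0$ and $\phi=\pi$); instead I would fold $[\tfrac\pi2,\pi]$ onto $[0,\tfrac\pi2]$ via $\phi\mapsto\pi-\phi$ and rewrite the second integral as
\[
\int_0^{\pi/2}\cos\phi\left(\frac1{\big[(2\lm_1+a\cos\phi)^2-a^2\big]^{3/2}}-\frac1{\big[(2\lm_1-a\cos\phi)^2-a^2\big]^{3/2}}\right)d\phi.
\]
On $\big[0,\tfrac\pi2\big)$ one has $0<2\lm_1-a\cos\phi<2\lm_1+a\cos\phi$ (using $a<\lm_1$), so the bracket is negative while $\cos\phi\ge0$; hence the integral is $\le0$ and $\Theta'(a)>0$, as wanted.

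Finally I would read off the two limits from the formula for $\Theta$. As $K\to\tfrac{\lm_1}2$ we have $a\to0$, the integrand converges uniformly to $\tfrac1{2\lm_1}$, and dominated convergence gives $\Theta(0)=\int_0^\pi\tfrac{d\phi}{2\lm_1}=\tfrac{\pi}{2\lm_1}$, i.e. $\lim_{K\to\lm_1/2}\eta_K(s_1)=\tfrac{\pi}{2\lm_1}$. As $K\to0$ we have $a\to\lm_1$, and the integrand converges pointwise to $\frac1{\lm_1\sqrt{(1+\cos\phi)(3+\cos\phi)}}$, which fails to be integrable near $\phi=\pi$ (there $(1+\cos\phi)(3+\cos\phi)\sim\tfrac32(\pi-\phi)^2$); Fatou's lemma then forces $\Theta(a)\to+\infty$, i.e. $\lim_{K\to0}\eta_K(s_1)=+\infty$. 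Combining these with the monotonicity of the previous paragraph completes the plan.
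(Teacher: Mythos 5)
Your argument is correct, and it establishes the key monotonicity by a genuinely different route from the paper's. The paper keeps the affine substitution $s=s_0+(s_1-s_0)t$, which leaves the singular weight $1/\sqrt{t(1-t)}$ in the integrand; the derivative $\partial_K H(t,K)$ is then organized through an auxiliary function $L(t,K)$, and the negativity of $\int_0^1\partial_K H\,dt$ rests on the comparison $-L(t,K)>L(1-t,K)$ together with the fact that $s_t^2+2\lm_1 s_t+2\lm_1K$ is smaller at $t$ than at $1-t$ for $t<\tfrac12$. Your trigonometric substitution $s=\lm_1+a\cos\phi$ absorbs the endpoint square-root singularities into $a\sin\phi$ and produces the closed form $\Theta(a)=\int_0^\pi\big[(2\lm_1+a\cos\phi)^2-a^2\big]^{-1/2}d\phi$ with a smooth, uniformly bounded integrand, after which differentiation under the integral sign is painless and the sign of $\Theta'(a)$ follows from the folding $\phi\mapsto\pi-\phi$ --- this plays exactly the role of the paper's $L(t,K)$ comparison, but in a form where the relevant inequality is immediate from $2\lm_1-a\cos\phi<2\lm_1+a\cos\phi$. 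The two limits are handled essentially identically in both proofs (uniform convergence at $a=0$, Fatou at $a=\lm_1$); the only blemish in yours is the constant in the asymptotic $(1+\cos\phi)(3+\cos\phi)\sim\tfrac32(\pi-\phi)^2$, which should be $(\pi-\phi)^2$ since $3+\cos\phi\to2$, an error that is immaterial for non-integrability. Your version buys a shorter and more transparent monotonicity step; the paper's parametrization in $t$ and $K$ is the one it reuses in the subsequent volume estimates, which is presumably why it is retained there.
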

\begin{proof}
To begin with, let us rewrite $F_K$ in its factorization
\[
F_K(s)=\frac1{2\lm_1}(s-s_0)(s_1-s)\Big(s+\frac1{2\lm_1}s^2+K\Big).
\]
in which we have the explicit formulation
\[
s_0=\lm_1-\sqrt{\lm_1^2-2\lm_1K} \quad \text{and} \quad
s_1=\lm_1+\sqrt{\lm_1^2-2\lm_1K}.
\]
Then, we get
\[
\eta_K(s_1)=\int_{s_0}^{s_1}\frac{ds}{\sqrt{(s-s_0)(s_1-s)(s^2+2\lm_1s+2\lm_1 K)}}.
\]
Consider the change of variable $s=s_t=s_0+(s_1-s_0)t$, $t\in[0,1]$, we obtain
\[
\eta_K(s_1)=\int_0^1\frac{dt}{\sqrt{t(1-t)(s_t^2+2\lm_1s_t+2\lm_1 K)}}.
\]

For each $(t,K)\in(0,1)\times(0,\frac{\lm_1}2)$, let's denote
\[
H(t,K)=\frac1{\sqrt{t(1-t)(s_t^2+2\lm_1s_t+2\lm_1 K)}}
\]
Notice $s_t=\lm_1-\sqrt{\lm_1^2-2\lm_1K}(1-2t)$,
it follows from a straightforward calculation that
\[
\frac{\pa}{\pa K}H(t,K)=-H(t,K)^{3}\cdot t(1-t)\cdot\Big[ (s_t+\lm_1)\frac{\pa s_t}{\pa K}+
\lm_1 \Big],
\]
where
\[
(s_t+\lm_1)\frac{\pa s_t}{\pa K}=2\lm_1^2(\lm_1^2-2\lm_1K)^{-\frac12}(1-2t)-\lm_1(1-2t)^2.
\]
Particularly, $\frac{\pa}{\pa K}H(t,K)$ has the following simplified formulation
\begin{\equ}\label{pa H}
\frac{\pa}{\pa K}H(t,K)=\lm_1(\lm_1-2\lm_1K)^{-1} \cdot L(t,K)\cdot H(t,K)-\lm_1 H(t,K)^3\cdot t(1-t)
\end{\equ}
with
\[
L(t,K)=\frac{\big( \lm_1-\sqrt{\lm_1^2-2\lm_1K}(1-2t) \big)^2-\lm_1^2}
{\big( 2\lm_1-\sqrt{\lm_1^2-2\lm_1K}(1-2t) \big)^2-\lm_1^2+2\lm_1K}.
\]
Thus, we see easily that the map $t\mapsto \frac{\pa }{\pa K}H(t,K)$ is in $L^1(0,1)$
for all $K\in(0,\frac{\lm_1}2)$.

Since $t\in(0,1)$, one checks that $L(\frac12,K)\equiv 0$,
\[
L(t,K)<0 \  \text{ if } 0\leq t<\frac12 \quad \text{and} \quad
L(t,K)>0 \ \text{ if } \frac12<t\leq 1.
\]
Moreover, by an elementary computation, we can find
\begin{\equ}\label{LL}
-L(t,K)>L(1-t,K) \quad \text{for all } t\in(0,\frac12).
\end{\equ}
Notice that the values of the function $t\mapsto s_t^2+2\lm_1s_t+2\lm_1 K$ for $t\in[0,\frac12)$
is strictly smaller than that for $t\in(\frac12,1]$. Then, we can substitute \eqref{LL}
into \eqref{pa H} to get
\[
\frac{d}{dK}\eta_K(s_1)<
\lm_1(\lm_1-2\lm_1K)^{-1}\Big(\int_0^{\frac12}L(t,K)\cdot H(t,K)dt
+\int_{\frac12}^1L(t,K)\cdot H(t,K)dt\Big)<0
\]
which shows $\eta_K(s_1)$ is decreasing with respect to $K$.

In order to calculate the limits, let us mention that, as $K\to0$, we have $s=s_0+(s_1-s_0)t\to 2\lm_1 t$ for $t\in[0,1]$. Hence, for arbitrary $\de>0$, it follows from Fatou's lemma that
\[
\aligned
\lim_{K\to0}\eta_K(s_1)&\geq\lim_{K\to0}\int_{\de}^{\frac12}
\frac{dt}{\sqrt{t(1-t)(s^2+2\lm_1s+2\lm_1 K)}} \\
&\geq\frac1{2\lm_1}\int_{\de}^{\frac12}\frac{dt}{t\sqrt{1-t^2}}
>\frac1{2\lm_1}\Big( \ln\frac12-\ln\de \Big).
\endaligned
\]
And thus, by taking $\de\to0$, we have $\lim_{K\to0}\eta_K(s_1)=+\infty$.

For $K\to\frac{\lm_1}2$, we shall use the fact $s_0,s_1\to\lm$ to
obtain
\[
\lim_{K\to\frac{\lm_1}2}\eta_K(s_1)=\frac1{2\lm_1}\int_0^1\frac{dt}{\sqrt{t(1-t)}}
=\frac\pi{2\lm_1},
\]
which completes the whole proof.
\end{proof}

Recall that we are looking for the existence of $2\eta_K(s_1)$-periodic solutions of Eq.
\eqref{eqs3} satisfying \eqref{period}, then Lemma \ref{etaK} implies
\begin{itemize}
\item[$(1)$] For every $\ell>0$, Eq. \eqref{eqs3} has the constant solution
$f_0\equiv \lm_1$ and $g_0\equiv0$ which gives the constant solution
$\psi_1=\frac{\sqrt{\lm_1}}2+i\frac{\sqrt{\lm_1}}2$ and
$\psi_2=\frac{\sqrt{\lm_1}}2-i\frac{\sqrt{\lm_1}}2$ to Eq. \eqref{eqs1}.
Such a solution satisfies
\begin{\equ}\label{f0}
{\rm Vol}(N,f_0^2\ig)=
\int_{N=\ell S^1\times S^1}f_0^2dtd\tau=4\pi^2\lm_1^2\ell.
\end{\equ}
And, for $\ell\leq\frac1{2\lm_1}$, this is the only solution of Eq. \eqref{eqs3}.

\item[$(2)$] Let $d\in\N$ with $\frac{d}{2\lm_1}<\ell\leq \frac{d+1}{2\lm_1}$.
Then for any $k=1,2,\dots,d$, we have $\frac{\pi\ell}k\geq \frac{\pi\ell}d>\frac{\pi}{2\lm_1}$
and there exists $K=K(\ell/k)\in(0,\frac{\lm_1}2)$ such that $\eta_K(s_1)=\frac{\pi\ell}k$.
And the solution $f_k$ of Eq. \eqref{eqs3} corresponding to $K$ satisfies
\begin{\equ}\label{fk}
{\rm Vol}(N,f_k^2\ig)=
\int_{\ell S^1\times S^1}f_k^2 dtd\tau=\frac{2k\pi}{\lm_1}\int_{s_0}^{s_1}
\frac{s^2}{\sqrt{F_K(s)}}ds.
\end{\equ}
\end{itemize}

\begin{Lem}
For any $\ell>\frac1{2\lm_1}$, we have ${\rm Vol}(N, f_1^2\ig)<
\min\big\{{\rm Vol}(N,f_0^2\ig), \ 8\lm_1\pi \big\}$.
\end{Lem}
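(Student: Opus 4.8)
The plan is to reduce both inequalities to elementary estimates on $\int_{s_0}^{s_1}s^2/\sqrt{F_K}\,ds$. Write $R=\sqrt{\lm_1^2-2\lm_1K}$, where $K=K(\ell)\in(0,\tfrac{\lm_1}2)$ is the unique value with $\eta_K(s_1)=\pi\ell$; by Lemma~\ref{etaK} such a $K$ exists exactly when $\ell>\tfrac1{2\lm_1}$, and then $s_0=\lm_1-R$, $s_1=\lm_1+R$, $0<R<\lm_1$. From \eqref{fk} and \eqref{f0} we have ${\rm Vol}(N,f_1^2\ig)=\tfrac{2\pi}{\lm_1}\int_{s_0}^{s_1}\tfrac{s^2\,ds}{\sqrt{F_K(s)}}$ and ${\rm Vol}(N,f_0^2\ig)=4\pi^2\lm_1^2\ell$; since $\int_{s_0}^{s_1}\tfrac{ds}{\sqrt{F_K}}=2\lm_1\eta_K(s_1)=2\pi\lm_1\ell$, the latter also equals $2\pi\lm_1\int_{s_0}^{s_1}\tfrac{ds}{\sqrt{F_K}}$, so ${\rm Vol}(N,f_1^2\ig)<{\rm Vol}(N,f_0^2\ig)$ is equivalent to $\int_{s_0}^{s_1}\tfrac{\lm_1^2-s^2}{\sqrt{F_K(s)}}\,ds>0$. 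Using the rewriting $F_K(s)=\tfrac1{4\lm_1^2}(s-s_0)(s_1-s)\big((s+\lm_1)^2-R^2\big)$ of the factorization obtained above and the substitution $s=\lm_1+R\sigma$, $\sigma\in(-1,1)$, this integral becomes a positive multiple of
\[
\int_{-1}^{1}\frac{-\sigma\,(2\lm_1+R\sigma)}{\sqrt{1-\sigma^2}\,\sqrt{(2\lm_1+R\sigma)^2-R^2}}\,d\sigma,
\]
and grouping $\sigma$ with $-\sigma$ reduces the claim to $h(2\lm_1-R\sigma)>h(2\lm_1+R\sigma)$ for $\sigma\in(0,1)$, where $h(x)=x/\sqrt{x^2-R^2}$ satisfies $h'(x)=-R^2(x^2-R^2)^{-3/2}<0$ on $(R,\infty)$. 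Since $2\lm_1-R\sigma>\lm_1>R$, this holds, and the first inequality follows for every $\ell>\tfrac1{2\lm_1}$.

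For ${\rm Vol}(N,f_1^2\ig)<8\lm_1\pi$, equivalently $\int_{s_0}^{s_1}s^2/\sqrt{F_K}\,ds<4\lm_1^2$, I would evaluate the integral in closed form. With $v=s^2$ one has $F_K=\tfrac1{4\lm_1^2}(v-s_0^2)(s_1^2-v)$, and after $v=s_0^2+(s_1^2-s_0^2)t$ followed by $\cos\theta=1-2t$ ($\theta\in[0,\pi]$) one gets $\int_{s_0}^{s_1}\tfrac{s^2\,ds}{\sqrt{F_K}}=\lm_1\sqrt{2\lm_1}\int_0^{\pi}\sqrt{\lm_1-K-R\cos\theta}\,d\theta$; a half-angle substitution then yields
\[
{\rm Vol}(N,f_1^2\ig)=\frac{8\pi\lm_1\,E(k)}{1+k'},\qquad k^2=\frac{2R}{\lm_1-K+R}\in(0,1),\quad k'=\sqrt{1-k^2}\in(0,1),
\]
where $E(k)=\int_0^{\pi/2}\sqrt{1-k^2\sin^2\theta}\,d\theta$ and the resulting constant is simplified using $(\lm_1-K)^2-R^2=K^2$, i.e. $2\lm_1=(\lm_1-K+R)(1+k')^2$. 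Hence the assertion is precisely $E(k)<1+k'$, and this follows from $\sqrt{1-k^2\sin^2\theta}=\sqrt{\cos^2\theta+k'^2\sin^2\theta}\le\cos\theta+k'\sin\theta$, the inequality being strict on $(0,\tfrac\pi2)$ whenever $k'>0$, upon integrating over $(0,\tfrac\pi2)$. As $k\in(0,1)$ for all $\ell>\tfrac1{2\lm_1}$, the inequality is strict there.

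The step I expect to be the main obstacle is the second inequality, because $8\lm_1\pi$ is exactly $\lim_{\ell\to\infty}{\rm Vol}(N,f_1^2\ig)$, the value at $K=0$, $k=1$, $k'=0$: the estimate is saturated at the boundary $\ell=\infty$ of the parameter range, so lossy bounds — a Jensen-type estimate on $\int s^2/\sqrt{F_K}$, or the crude $s<s_1<2\lm_1$ — all fail near $\ell=\infty$, and one genuinely needs the exact reduction to $E(k)$ together with the sharp elementary inequality $E(k)<1+k'$. The first inequality, by contrast, is comfortably strict for large $\ell$ (its left side stays bounded while its right side diverges) and degenerates to equality only as $\ell\to\tfrac1{2\lm_1}$, where $f_1$ coincides with the constant solution $f_0$; the symmetrization and monotonicity argument of the first paragraph settles it directly.
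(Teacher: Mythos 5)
Your proof is correct. The first inequality is handled essentially as in the paper: after centering the interval $[s_0,s_1]$ by a linear substitution, both arguments reduce the positivity of $\int_{s_0}^{s_1}(\lambda_1^2-s^2)\,F_K(s)^{-1/2}\,ds$ to the antisymmetric pairing $\sigma\leftrightarrow-\sigma$ combined with the monotonicity of $h(x)=x/\sqrt{x^2-R^2}$; your $h(2\lambda_1+R\sigma)$ is exactly the paper's auxiliary function $B(t)$ in the variable $t=(1+\sigma)/2$. The second inequality is where you genuinely diverge. The paper uses the lossy pointwise bound obtained by dropping the term $2\lambda_1 K$ from the denominator, and then shows that the resulting integral $\int_0^1 Z(t,\delta)\,t^{-1/2}(1-t)^{-1/2}\,dt$ is strictly increasing in $\delta=\sqrt{\lambda_1^2-2\lambda_1 K}$ with limiting value $2\lambda_1$ as $\delta\to\lambda_1$; the argument survives the saturation at $\ell=\infty$ because the pointwise bound is strict for every $K>0$, so your remark that any lossy bound must fail is slightly overstated. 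You instead observe that $F_K$ is a quadratic in $v=s^2$, evaluate the volume in closed form as $8\pi\lambda_1 E(k)/(1+k')$, and conclude from the sharp elementary inequality $E(k)<1+k'$. Both routes are valid: the paper's avoids elliptic integrals and needs only the convexity and monotonicity of $x\mapsto x^2/\sqrt{x^2+2\lambda_1 x}$, while your exact formula additionally yields at once the limit $\lim_{\ell\to\infty}{\rm Vol}(N,f_1^2\ig)=8\pi\lambda_1$ asserted in Theorem \ref{thm tours}, and makes transparent why $8\pi\lambda_1$ is the optimal constant.
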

\begin{proof}
Since $f_1$ has only one period on $\ell S^1$, by Lemma \ref{etaK}, we can fix
$K=K(\ell)>0$ such that $\eta_K(s_1)=\pi\ell$. Following from \eqref{f0}
and \eqref{fk}, we have to show that
\begin{\equ}\label{f1 less}
\frac1{2\lm_1}\int_{s_0}^{s_1}\frac{s^2}{\sqrt{F_K(s)}}ds<\pi\lm_1^2\ell.
\end{\equ}

Similar to the calculations in Lemma \ref{etaK}, let us consider the change of
variable $s=s_t=s_0+(s_1-s_0)t$ for $t\in[0,1]$. Then we have
\begin{eqnarray}\label{eeee}
\pi\lm_1^2\ell-\frac1{2\lm_1}\int_{s_0}^{s_1}\frac{s^2}{\sqrt{F_K(s)}}ds
&=&\int_0^1\frac{\lm_1^2-s_t^2}{\sqrt{t(1-t)(s_t^2+2\lm_1s_t+2\lm_1 K)}}dt
\nonumber \\[0.5em]
&=&\sqrt{\lm_1^2-2\lm_1K}\int_0^1 \frac{(1-2t)\cdot B(t)}{\sqrt{t(1-t)}}dt.
\end{eqnarray}
where
\[
B(t)=\frac{2\lm_1-\sqrt{\lm_1^2-2\lm_1K}(1-2t)}{\sqrt{(s_t^2+2\lm_1s_t+2\lm_1 K)}}.
\]
Clearly, $B(t)>0$ for all $t\in(0,1)$. Furthermore, it follows immediately from
the computations
\[
\frac{(s_t^2+2\lm_1s_t+2\lm_1 K)^{\frac32}}{2\sqrt{\lm_1^2-2\lm_1K}}B'(t)=
(s_t^2+2\lm_1s_t+2\lm_1 K)-\Big( 2\lm_1-\sqrt{\lm_1^2-2\lm_1K}(1-2t) \Big)^2
\]
and
\[
s_t^2+2\lm_1s_t+2\lm_1 K=\Big( 2\lm_1-\sqrt{\lm_1^2-2\lm_1K}(1-2t) \Big)^2-\lm_1^2+2\lm_1K
\]
that $B'(t)<0$. Therefore, we can see from \eqref{eeee} that the total integral
is positive and this implies \eqref{f1 less}. And thus,
\begin{\equ}\label{XXX1}
{\rm Vol}(N, f_1^2\ig)<4\pi^2\lm_1^2\ell={\rm Vol}(N,f_0^2\ig).
\end{\equ}

To give another upper bound for ${\rm Vol}(N, f_1^2\ig)$, let us first set
$\de=\sqrt{\lm_1^2-2\lm_1K}\in(0,\lm_1)$. Then, for the change of
variable $s=s_t=\lm_1-\de(1-2t)$ for $t\in[0,1]$, we have
\[
\frac1{2\lm_1}\int_{s_0}^{s_1}\frac{s^2}{\sqrt{F_K(s)}}ds
=\int_0^1\frac{s_t^2}{\sqrt{t(1-t)(s_t^2+2\lm_1s_t+2\lm_1 K)}}dt
<\int_0^1\frac{Z(t,\de)}{\sqrt{t(1-t)}}dt
\]
where
\[
Z(t,\de)=\frac{s_t^2}{\sqrt{s_t^2+2\lm_1s_t}}.
\]
Notice that, for $x>0$,
\[
\frac{d}{dx}\Big(\frac{x^2}{\sqrt{x^2+2\lm_1x}} \Big)>0, \qquad
\frac{d^2}{dx^2}\Big(\frac{x^2}{\sqrt{x^2+2\lm_1x}} \Big)>0.
\]
And, for $t\in(0,1)$,
\[
\frac{\pa}{\pa\de}Z(t,\de)=\frac{d}{ds_t}\Big(\frac{s_t^2}{\sqrt{s_t^2+2\lm_1s_t}} \Big)
\frac{d s_t}{d\de}=(2t-1)\frac{d}{ds_t}\Big(\frac{s_t^2}{\sqrt{s_t^2+2\lm_1s_t}} \Big).
\]
Hence we have
\[
\frac{d}{d\de}\int_0^1\frac{Z(t,\de)}{\sqrt{t(1-t)}}dt=\int_0^1
\frac{\frac{\pa}{\pa\de}Z(t,\de)}{\sqrt{t(1-t)}}dt>0,
\]
that is, the integral $\int_0^1\frac{Z(t,\de)}{\sqrt{t(1-t)}}dt$ is strictly increasing with
respect to $\de$. At the same time, one calculates easily that
\[
\lim_{\de\to\lm_1}\int_0^1\frac{Z(t,\de)}{\sqrt{t(1-t)}}dt=\int_0^1\frac{2\lm_1 t^2}
{\sqrt{t^2(1-t^2)}}dt=2\lm_1
\]
Therefore, we conclude
\[
{\rm Vol}(N, f_1^2\ig)=\frac{2\pi}{\lm_1}\int_{s_0}^{s_1}
\frac{s^2}{\sqrt{F_K(s)}}ds<4\pi\int_0^1\frac{Z(t,\de)}{\sqrt{t(1-t)}}dt<8\lm_1\pi
\]
and, together with \eqref{XXX1}, we complete the proof.
\end{proof}

\begin{Thm}\label{thm tours}
Let $\lm>0$ denote a positive eigenvalue of the Dirac operator on the second
circle in $N=\ell S^1\times S^1$, then the following facts valid
\begin{itemize}
\item[$(1)$] For every $\ell>0$, the Spinorial Yamabe equation
\begin{\equ}\label{SYT}
D_{\ig}\phi=|\phi|^{2}\phi \quad \text{on } N=\ell S^1\times S^1
\end{\equ}
has a constant length solution
\[
\phi_0=e^{-i(\lm\tau+\vartheta)}\begin{pmatrix}
\frac{\sqrt{\lm}}2+i\frac{\sqrt{\lm}}2\\[0.3em]
\frac{\sqrt{\lm}}2-i\frac{\sqrt{\lm}}2
\end{pmatrix}\in \C^2
\]
for any $\vartheta\in[0,2\pi]$ such that
\[
{\rm Vol}\big( N, |\phi_0|^4 \ig\big)=4\pi^2\lm^2\ell.
\]
And, for $\ell\leq \frac1{2\lm}$, this is the only solution of the form $\phi=\psi e^{-i\lm\tau}\in \mbs(N)$ to Eq. \eqref{SYT}.

\item[$(2)$] Let $\ell>\frac1{2\lm}$ and $d\in\N$ with $\frac{d}{2\lm}<\ell\leq\frac{d+1}{2\lm}$,
Eq. \eqref{SYT} has $d+1$ inequivalent solutions. Particularly, these solutions are given by the constant length solution and $k$ periods of a solution $\phi_{\ell,k}$ on $N$ with
fundamental period $\frac{2\pi\ell}{k}$ for $k=1,2,\dots,d$.

\item[$(3)$] Let $\ell>\frac1{2\lm}$ and $\phi_{\ell,1}$ denote the $2\pi\ell$-periodic
solution of Eq. \eqref{SYT}, then
\[
{\rm Vol}\big(N, |\phi_{\ell,1}|^4\ig \big)<\min\big\{ {\rm Vol}\big( N, |\phi_0|^4 \ig\big),\
8\pi\lm \big\}.
\]
and
\[
\lim_{\ell\to\infty}{\rm Vol}\big(N, |\phi_{\ell,1}|^4\ig \big)=8\pi\lm.
\]

\item[$(4)$] Let $\sa_N^*$ denote the nontrivial spin structure on $N=\ell S^1\times S^1$ such that
$\lm=\frac12$ is the first positive eigenvalue of the Dirac operator on the second circle, then
\[
\lm_{\min}^+(N,\ig,\sa_N^*)\leq {\rm Vol}\big(N, |\phi_{\ell,1}|^4\ig \big)^{\frac12}<2\sqrt\pi
\]
for all $\ell>0$.
\end{itemize}
\end{Thm}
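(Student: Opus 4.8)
The plan is to assemble the four items of the theorem from the one‑dimensional reduction already carried out. Under the ansatz $\phi=\psi\,e^{-i\lm\tau}$ — legitimate because in either spin structure every eigenspinor of $D_{\ig_{S^1}}$ has constant length — equation \eqref{SYT} is equivalent to the system \eqref{eqs1}, hence, modulo the symmetries $\phi\mapsto e^{i\vartheta}\phi$ and complex conjugation, to \eqref{eqs2} and to \eqref{eqs3}--\eqref{eqs4} with parameter $K\in(0,\tfrac{\lm}{2}]$. So I would derive (1) and (2) from Lemma~\ref{etaK}, the first inequality in (3) from the volume comparison Lemma established just above together with a limit computation, and (4) by evaluating the functional $J_\ig$ on the solutions produced.

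For (1)--(2): the constant solution $u=v=\tfrac{\sqrt{\lm}}{2}$ of \eqref{eqs2} exists for all $\ell>0$, corresponds to $f_0\equiv\lm$ and hence to a spinor $\phi_0$ with $|\phi_0|^2\equiv\lm$ and $\text{Vol}(N,|\phi_0|^4\ig)=\int_{\ell S^1\times S^1}\lm^2\,dt\,d\tau=4\pi^2\lm^2\ell$, the $e^{-i\vartheta}$‑freedom coming from the symmetry of \eqref{eqs1}. A non-constant solution corresponds to a pair $(K,k)$ with $K\in(0,\tfrac{\lm}{2})$, $k\in\N$ and $\eta_K(s_1)=\tfrac{\pi\ell}{k}$; by Lemma~\ref{etaK} the map $K\mapsto\eta_K(s_1)$ is a strictly decreasing bijection of $(0,\tfrac{\lm}{2})$ onto $(\tfrac{\pi}{2\lm},+\infty)$, so such a $K$ exists exactly when $\tfrac{\pi\ell}{k}>\tfrac{\pi}{2\lm}$, i.e. $k<2\lm\ell$. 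Thus for $\ell\le\tfrac{1}{2\lm}$ no $k$ qualifies, which is the uniqueness in (1), and for $\tfrac{d}{2\lm}<\ell\le\tfrac{d+1}{2\lm}$ exactly $k=1,\dots,d$ qualify, each giving a solution $\phi_{\ell,k}$ whose conformal factor $f_k$ has fundamental period $2\eta_K(s_1)=\tfrac{2\pi\ell}{k}$. These $d+1$ solutions are pairwise inequivalent: all available symmetries (rotations and reflections of each circle factor, the $e^{i\vartheta}$‑action and conjugation) preserve $|\phi|$ and hence the number of maxima of $|\phi|$ along $\ell S^1$, which equals $k$ for $\phi_{\ell,k}$ and $0$ for $\phi_0$, while the periods $\tfrac{2\pi\ell}{1},\dots,\tfrac{2\pi\ell}{d}$ are pairwise distinct.

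For (3): the first inequality is exactly the estimate $\text{Vol}(N,f_1^2\ig)<\min\{\text{Vol}(N,f_0^2\ig),8\lm\pi\}$ proved just before the theorem (read with $\lm_1=\lm$, $|\phi_{\ell,1}|^4=f_1^2$ and $|\phi_0|^4=f_0^2$). For the limit, note that as $\ell\to\infty$ the parameter $K=K(\ell)$ solving $\eta_K(s_1)=\pi\ell$ tends to $0$ (Lemma~\ref{etaK}), so $s_0\to0$ and $s_1\to2\lm$, and with the substitution $s=s_t:=s_0+(s_1-s_0)t$ from Lemma~\ref{etaK}, formula \eqref{fk} with $k=1$ becomes
\[
\text{Vol}(N,f_1^2\ig)=\frac{2\pi}{\lm}\int_{s_0}^{s_1}\frac{s^2\,ds}{\sqrt{F_K(s)}}
=4\pi\int_0^1\frac{s_t^2\,dt}{\sqrt{t(1-t)\,(s_t^2+2\lm s_t+2\lm K)}}.
\]
The integrand is dominated, uniformly in $K$, by $\tfrac{2\lm}{\sqrt{t(1-t)}}\in L^1(0,1)$ (since $\tfrac{s_t^2}{\sqrt{s_t^2+2\lm s_t}}\le 2\lm$ on $[0,2\lm]$) and converges pointwise to $\tfrac{2\lm t}{\sqrt{1-t^2}}$, so dominated convergence yields $\text{Vol}(N,f_1^2\ig)\to 4\pi\int_0^1\tfrac{2\lm t}{\sqrt{1-t^2}}\,dt=8\pi\lm$.

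For (4): a spinor $\phi$ on the surface $N$ solving $D_\ig\phi=|\phi|^2\phi$ satisfies $|D_\ig\phi|=|\phi|^3$ and $(D_\ig\phi,\phi)=|\phi|^4$, so with $\dim N=2$ the functional \eqref{J-ig-functional} evaluates to $J_\ig(\phi)=\big(\int_N|\phi|^4\,d\vol_\ig\big)^{1/2}=\text{Vol}(N,|\phi|^4\ig)^{1/2}$. Since $\lm_{\min}^+(N,[\ig],\sa)\le J_\ig(\phi)$ for every admissible test spinor (the variational characterization recalled in the introduction, see \cite{Ammann,AGHM}), this gives $\lm_{\min}^+(N,\ig,\sa_N^*)\le\text{Vol}(N,|\phi|^4\ig)^{1/2}$. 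Choosing $\phi=\phi_{\ell,1}$ when $\ell>\tfrac{1}{2\lm}=1$ and invoking (3) gives $\text{Vol}(N,|\phi_{\ell,1}|^4\ig)<8\pi\lm=4\pi$, while choosing $\phi=\phi_0$ when $0<\ell\le1$ and invoking (1) gives $\text{Vol}(N,|\phi_0|^4\ig)=\pi^2\ell\le\pi^2<4\pi$; in both cases $\lm_{\min}^+(N,\ig,\sa_N^*)<\sqrt{4\pi}=2\sqrt{\pi}$, i.e. the strict form of \eqref{spinorial Aubin inequ} in dimension two. The two genuinely delicate points I anticipate are the dominated-convergence justification in (3) and writing up carefully the inequivalence argument in (2); everything else is a direct substitution into results already established.
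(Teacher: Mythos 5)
Your proposal is correct and follows essentially the same route as the paper, which proves Theorem~\ref{thm tours} by assembling the reduction \eqref{eqs1}--\eqref{eqs4}, Lemma~\ref{etaK}, the period condition \eqref{period} and the volume-comparison lemma of Section~\ref{S1-bundles}. The two places where you go beyond what the paper writes down --- the dominated-convergence justification of $\lim_{\ell\to\infty}{\rm Vol}(N,|\phi_{\ell,1}|^4\ig)=8\pi\lm$ and the substitution of $\phi_0$ for $\phi_{\ell,1}$ in item $(4)$ when $\ell\leq\frac1{2\lm}$ (where $\phi_{\ell,1}$ does not exist) --- are precisely the details the paper leaves implicit, and both are handled correctly.
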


\vspace{2mm}
{\sc Yannick Sire\\
Department of Mathematics, Johns Hopkins University,\\
3400 N. Charles Street, Baltimore,
Maryland 21218}\\
sire@math.jhu.edu\\

{\sc Tian Xu\\
 Center for Applied Mathematics, Tianjin University,\\
 300072, Tianjin, China}\\
 xutian@amss.ac.cn

\end{document}